\DeclareMathOperator{\cotan}{cotan}
\newtheorem{theorem}{Theorem}[]
\newtheorem{lemma}[theorem]{Lemma}
\newtheorem{Proposition}[theorem]{Proposition}
\newtheorem{definition}[theorem]{Definition}
\newtheorem{remark}[theorem]{Remark}
\newcolumntype{M}[1]{>{\raggedright}m{#1}}
\newcommand{\R}{\mathbb{R}}
\newcommand{\del}{\delta}
\newcommand{\N}{\mathbb{N}}
\renewcommand{\S}{\mathcal{S}}
\newcommand{\K}{\mathcal{K}}
\newcommand{\p}{\mathcal{P}}
\newcommand{\D}{\mathcal{D}}
\newcommand{\Om}{\Omega}
\newcommand{\eps}{\varepsilon}
\providecommand{\keywords}[1]{\textbf{\textit{Keywords:}} #1}
\providecommand{\ams}[1]{\textbf{\textit{AMS classification:}} #1}
\newcounter{mnotecount}[section]
\newcommand{\rmnote}[1]{}
\title{Complete systems of inequalities relating the perimeter, the area and the Cheeger constant of planar domains}
 \author{Ilias Ftouhi}
 \date{
    \today
}
\begin{document}
\maketitle






\begin{abstract}

The object of the paper is to find complete systems of inequalities relating the perimeter $P$, the area $|\cdot|$ and the Cheeger constant $h$ of planar sets. To do so, we study the so called Blaschke--Santal\'o diagram of the triplet $(P,h,|\cdot|)$ for different classes of domains: simply connected sets, convex sets and convex polygons with at most $N$ sides. We completely determine the diagram in the latter cases except for the class of convex $N$-gons when $N\ge 5$ is odd: therein, we show that the boundary of the diagram is given by the graphs of two continuous and strictly increasing functions. An explicit formula for the lower one and a numerical method to obtain the upper one is provided. At last, some applications of the results are presented. 
\end{abstract}

\keywords{Cheeger constant, complete systems of inequalities, Blaschke--Santal\'o diagrams, convex sets.}

\ams{52A10, 52A40, 65K15.}











\section{Introduction and main results}\label{s:introduction}

Let $\Om$ be a bounded subset of $\R^n$ (where $n\ge2$). The Cheeger problem consists in studying the following minimization problem
\begin{equation}\label{def:cheeger}
h(\Om) := \inf\left\{\ \frac{P(E)}{|E|}\ \Big|\ E\ \text{measurable and}\ E\subset \Om\right\} ,    
\end{equation}
where $P(E)$ is the distributional perimeter of $E$ measured with respect to $\R^n$ (see for example \cite{MR2832554} for definitions) and $|E|$ is the $n$-dimensional Lebesgue measure of $E$. The quantity $h(\Om)$ is called \textit{the Cheeger constant of $\Om$} and any set $C_\Om \subset \Om$ for which the infimum is attained is called a \textit{Cheeger set of $\Om$.} \vspace{2mm}

Since the early work of Jeff Cheeger \cite{jeff_cheeger}, many authors took interest in the Cheeger problem has interested various authors. An introductory survey on the subject is referenced in \cite{MR2832554}. We recall that every bounded domain $\Om$ with Lipschitz boundary admits at least one Cheeger set $C_\Om$, see for example \cite[Proposition 3.1]{MR2832554}. In \cite{cham_1}, the authors prove the uniqueness of the Cheeger set when $\Om\subset \R^n$ is convex, but as far as we know there is no complete characterization of $C_\Om$ in dimension $n\ge3$ (even when convexity is assumed), in contrast to the planar case which was treated by Bernd Kawohl and Thomas Lachand-Robert in \cite{kawohl} where a complete description of the Cheeger sets of planar convex domains is given in addition to an algorithm to compute the Cheeger constant of convex polygons. We finally refer to \cite{MR3719067,MR3451406,MR4156828} for recent results in larger classes of sets. \vspace{2mm}


In this paper, we aim to describe all possible geometrical inequalities involving the perimeter, the area and the Cheeger constant of a given planar shape. It comes down to studying a so called Blaschke--Santal\'o diagram of the triplet  $(P,h,|\cdot|)$. \vspace{2mm}

 Such a diagram allows to visualize all possible inequalities between three geometrical quantities. It is named after Blaschke and Santal\'o in reference to their works \cite{blaschke,santalo}, where they first introduced this notion.   Afterward, these diagrams have been extensively studied, especially for the class of planar convex sets. We refer to \cite{cifre} for more details and various examples involving geometrical functionals.  We also refer to \cite{buttazzo_3,delyon,delyon2,ftouh,LZ,buttazzo_2,vdBBP} for some recent results. 

For more precision, let us define the Blaschke--Santal\'o diagrams we are interested in in this paper.
\begin{definition}\label{def:diagrams}
Given $\mathcal{F}$ a class of measurable planar sets of, we define 
\begin{eqnarray*}\mathcal{D}_{\mathcal{F}}
&:=&\Big\{(x,y)\in\R^2, \;\;\exists\ \Om\in\mathcal{F}\textrm{ such that }|\Om|=1, P(\Om)=x,\ h(\Om)=y\Big\}\\
&:=&\Big\{\ \big(P(\Omega),h(\Omega)\big),\ \Omega\in\mathcal{F}, \;|\Omega|=1\ \Big\}.
\end{eqnarray*}
\end{definition}

We note that thanks to the following { scaling properties:} 
$$\forall t>0, \;\;\;\; h(t\Om) = \frac{h(\Om)}{t},\ \ \ \ |t\Om|=t^2 |\Om|\ \ \ \ \ \text{and}\ \ \ \ \ P(t\Om)=tP(\Om),$$
one can give a { scale-invariant} formulation of the diagram
\begin{eqnarray*}\mathcal{D}_{\mathcal{F}}
&=&\left\{(x,y)\in\R^2, \;\;\exists \Om\in\mathcal{F}\textrm{ such that } P(\Om)/|\Om|^{1/2}=x,\ \  |\Om|^{1/2}h(\Om)=y\right\}\\
&=&\left\{\ \left(\frac{P(\Om)}{|\Om|^{1/2}},|\Om|^{1/2}h(\Omega)\right),\ \ \Omega\in\mathcal{F} \right\}.
\end{eqnarray*}

In the whole paper, we denote { by}:
\begin{itemize}
    \item $\S^2$ the set of bounded, planar and non-empty simply connected sets,
    \item $\K^2$ the set of bounded, planar and non-empty convex sets,
    \item $\p_N$ the set of convex polygons of at most $N$ sides,
    \item $B$ the disk of unit area,
    \item $R_N$ a regular polygon of $N$ sides and unit area,
    \item $d^H$ the Hausdorff distance, see for example \cite[Chapter 2]{HP05Var} for { the} definition and more details,
    \item $d(\Om)$ and $r(\Om)$ respectively the diameter and inradius of the set $\Om$.
\end{itemize}


We are aiming at describing all possible inequalities relating $P$, $|\cdot|$ and $h$ for different classes of planar sets and thus describing the associated Blaschke--Santal\'o diagrams. Let us first state the inequalities that we already know; if $\Om$ is measurable, we have :

\begin{itemize}
\item the isoperimetric inequality: 
\begin{equation}\label{eq:isoperimetric}
\frac{P(\Om)}{|\Om|^{1/2}}\ge \frac{P(B)}{|B|^{1/2}}=2\sqrt{\pi},
\end{equation}
\item a consequence of the definition of the Cheeger constant
\begin{equation}\label{eq:trivial}
h(\Om) = \inf_{E\subset \Om} \frac{P(E)}{|E|}\leq \frac{P(\Om)}{|\Om|},
\end{equation}
\item a Faber-Krahn type inequality: 
\begin{equation}\label{eq:faber-krahn}
|\Om|^{1/2}h(\Om)\ge |B|^{1/2}h(B)=\frac{P(B)}{|B|^{1/2}}=2\sqrt{\pi}.
\end{equation}
\end{itemize}
{ We note that each inequality may be visualized in the Blaschke--Santal\'o diagram as the  hypograph or subgraph of a given function, see Figure \ref{fig:diagrams} for example.}
\vspace{2mm}

It is natural to wonder if there are other inequalities. Yet, the answer is tightly related to the choice of the class of sets $\mathcal{F}$. In the present paper, we are interested in studying complete systems of inequalities relating the perimeter $P$, the Cheeger constant $h$ and the area $|\cdot|$ for three classes of planar sets: \begin{enumerate}
    \item The class of simply connected sets.
    \item The class of convex sets.
    \item The classes of convex polygons of at most $N$ sides, where $N\ge 3$. 
\end{enumerate}


\subsection{Results on the classes of simply connected and convex set}

We provide the  complete descriptions of the Blaschke--Santal\'o diagrams of the triplet $(P,h,|\cdot|)$ for both the classes $\mathcal{S}^2$ of planar simply connected sets and $\K^2$ of planar convex sets.

\begin{theorem}\label{th:diagram_open}
We take $x_0=P(B) = 2\sqrt{\pi}$. 
\begin{enumerate}
\item The diagram of the class $\mathcal{S}^2$ of planar simply connected domains is given by
$$\D_{\mathcal{S}^2}=\{(x_0,x_0)\}\cup\left\{(x,y)\ \ |\ \ x> x_0\ \ \ \text{and}\ \ \ x_0<y\leq x\right\}.$$
\item 
The diagram of the class $\mathcal{K}^2$ of planar convex domains is given by
$$\D_{\mathcal{K}^2}=\left\{(x,y)\ \ \Big|\ \ x\ge x_0\ \ \ \text{and}\ \ \  \frac{x}{2}+\sqrt{\pi}\leq y \leq x \right\}.$$
\end{enumerate}
\end{theorem}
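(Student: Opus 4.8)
The plan is to show, for each of the two classes, that the three displayed inequalities cut out exactly the stated region (necessity) and that every point of the region is attained (sufficiency). The upper bound $y\le x$ is immediate from \eqref{eq:trivial} at unit area, the constraint $x\ge x_0$ is \eqref{eq:isoperimetric}, and $y\ge x_0$ is \eqref{eq:faber-krahn}; moreover equality in \eqref{eq:faber-krahn} forces $\Om$ to be a disk, which also gives $x=x_0$, so every non-disk lies strictly above $y=x_0$, matching the open lower edge in the simply connected case. The only genuinely new necessary inequality is the convex lower bound $y\ge x/2+\sqrt\pi$, which is where the real work lies.

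To prove $h\ge P/2+\sqrt\pi$ for $\Om\in\K^2$ (unit area) I would use the description of planar convex Cheeger sets in \cite{kawohl}: $h=1/r$, where $r$ is the least positive root of $|\Om_{-r}|=\pi r^2$ and $\Om_{-r}$ denotes the inner parallel set. For a convex polygon whose Cheeger set meets every side, Steiner's formula gives $|\Om_{-t}|=1-Pt+Tt^2$ with $T:=\sum_i\tan(\gamma_i/2)$ and $\gamma_i$ the exterior angles, whence $(T-\pi)r^2-Pr+1=0$ and $h=\tfrac12\big(P+\sqrt{P^2-4(T-\pi)}\big)$. Thus $h\ge P/2+\sqrt\pi$ is \emph{equivalent} to the scale-invariant geometric inequality $P^2\ge 4|\Om|\,T$. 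I would establish the latter by Lindelöf's theorem: among convex polygons with prescribed edge normals (hence prescribed $\gamma_i$ and $T$) the tangential one minimises the perimeter at fixed area, and for a tangential polygon $P=2\rho T$ and $|\Om|=\rho^2T$, so $P^2=4|\Om|T$; every competitor therefore satisfies $P^2\ge 4|\Om|T$, with equality exactly for tangential polygons. Passing to a general convex body by polygonal approximation (with $T\to\pi$ in the smooth limit, recovering the isoperimetric inequality) and using the continuity of $P$, $|\cdot|$ and $h$ for $d^H$ completes the necessity.

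For sufficiency I would produce the two boundary curves explicitly and fill the interior by continuity. The diagonal $y=x$ is covered by stadiums: a short computation shows every unit-area stadium is calibrable, so $h=P/|\Om|$ and $(P,h)=(x,x)$ for each $x\ge x_0$. The lower curve $y=x/2+\sqrt\pi$ is covered by tangential polygons, whose unit-area perimeters $P=2\sqrt T$ exhaust $[x_0,\infty)$ as $T$ runs over $[\pi,\infty)$. For an interior point $(x,y)$ with $x/2+\sqrt\pi<y<x$ I would join, at fixed perimeter $x$, a tangential polygon to a stadium through a continuous family of convex unit-area sets; along it $h$ is continuous and runs from $x/2+\sqrt\pi$ to $x$, so it attains $y$ by the intermediate value theorem. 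For the class $\S^2$ the same stadiums give the diagonal, and an interior point $(x,y)$ with $x_0<y\le x$ is obtained by taking the unit-area stadium of perimeter $y$ and attaching thin filaments to raise the perimeter to $x$: being thin they leave the area and the Cheeger constant unchanged in the limit (their ratio $P(E)/|E|$ is large, so they never enter the minimisation), which yields $(x,y)$, while the disk yields the corner $(x_0,x_0)$.

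The main obstacle is the convex lower bound: pinning down the sharp inequality $P^2\ge 4|\Om|\,T$ and, above all, the technical points surrounding it—justifying the closed form for $h$ (in particular the hypothesis that the Cheeger set meets every side, and the reduction from polygons to all convex bodies by density), and proving that $h$ varies continuously along the interpolating path used to fill the interior of $\D_{\K^2}$.
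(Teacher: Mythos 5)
Your overall architecture (necessity via a sharp convex lower bound, sufficiency via explicit boundary families plus interpolation) matches the paper, and your Lindel\"of-type proof of $P^2\ge 4|\Om|\,T$ for polygons is a legitimate, self-contained alternative to the paper's citation of \cite{web} for Theorem \ref{th:T_estimates}. But the necessity half has a genuine hole exactly where you flag "the main obstacle": your closed form for $h$, hence the equivalence with $P^2\ge 4|\Om|T$, is only valid for \emph{Cheeger-regular} polygons (Theorem \ref{th:KLR_polygon}), and the passage to all convex bodies "by density" does not close it. A Hausdorff-approximating sequence of polygons of a convex body need not be Cheeger-regular (truncating a corner of a square by a tiny side already produces a non-Cheeger-regular polygon, and nothing forces approximants to avoid this), so density only transfers the inequality if you first prove it for \emph{all} convex polygons. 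That extension is the paper's main technical contribution: Lemma \ref{lem:dorin} deforms an arbitrary convex polygon into a Cheeger-regular one with the same Cheeger constant, larger perimeter and smaller area (parallel chord movements, then rotations of the free sides about their midpoints, with the monotonicity computations done in its proof), after which Steps 2--3 of Section \ref{ss:proof_of_new_inequality} give \eqref{eq:new_inequality} for general polygons and then, by density, for all of $\K^2$. Without an argument of this kind your necessity proof covers only a subclass.

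The sufficiency half has two further gaps. For the interior of $\D_{\K^2}$ you invoke a continuous family of unit-area convex sets \emph{with perimeter identically equal to $x$} joining a tangential polygon to a stadium, and then apply the intermediate value theorem to $h$; but no such family is constructed, and the natural candidate fails: along the normalized Minkowski path $K^t=\bigl(tS\oplus(1-t)L\bigr)/\sqrt{|tS\oplus(1-t)L|}$ the support function is affine in $t$ while Brunn--Minkowski gives $|tS\oplus(1-t)L|\ge 1$, so $P(K^t)\le x$ with strict inequality in general -- the perimeter drifts, and your one-parameter IVT no longer targets the vertical segment $\{x\}\times[x/2+\sqrt\pi,x]$. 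The paper circumvents precisely this by a two-parameter topological argument: closed curves $\Gamma_p$ assembled from the Minkowski arc and the two boundary arcs, the uniform stability estimate \eqref{eq:convergence}, and a winding-number/homotopy argument (Step 5 of Section \ref{sect:preuve_convex}) using \eqref{eq:estimations_peri} to show every interior point has nonzero index with respect to $\Gamma_{4x_A}$ and is therefore hit. Finally, for $\S^2$ your filament construction only yields $(x,y)$ "in the limit", which is not enough: a measure-zero filament does not change the distributional perimeter at all, while a positive-width filament changes the area, so after renormalizing to unit area both coordinates of the point move and you only conclude $(x,y)\in\overline{\D_{\S^2}}$. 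The paper's tailed construction fixes this exactly: the conical cap of a cup-body near its singular vertex is a triangle, and it is replaced by a longer, thinner triangle $\mathcal{T}_t$ of \emph{exactly} the same area, leaving the Cheeger set (hence $h$) and the total area unchanged, so the IVT applied to $t\mapsto P(L^t)$ produces the exact point. Each of these three gaps is fixable only by real additional work, and together they constitute the substance of the paper's proof.
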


{ 
The study of the boundary of the Blaschke--Santal\'o diagram $\D_{\K^2}$ is a crucial step toward its complete description. Thus, we introduce the following definition of upper and lower boundaries:
\begin{definition}\label{def:boundary}
We take $x_0=P(B) = 2\sqrt{\pi}$. We define the upper boundary of the diagram $\D_{\K^2}$ as follows: 
$$\{(x,y_x)\ |\ x\ge x_0\},$$
where $$y_x:= \sup\{h(\Omega)\ |\ \Om\in \K^2,\ P(\Om)=x\ \text{and}\ |\Om|=1\}\in \R,$$
and the lower boundary of the diagram $\D_{\K^2}$ as follows: 
$$\{(x,y_x)\ |\ x\ge x_0\},$$
where $$y_x:= \inf\{h(\Omega)\ |\ \Om\in \K^2,\ P(\Om)=x\ \text{and}\ |\Om|=1\}\in \R.$$
\end{definition}}

\begin{center}
\begin{tikzpicture}
\fill[color=yellow] (8.5,5) -- (9.5,5) -- (9.5,5.5) -- (8.5,5.5) -- cycle; 
\draw (9.7,5.25) node[right] {Simply connected sets.};
\fill[color=gray!20,pattern=vertical lines] (8.5,4) -- (9.5,4) -- (9.5,4.5) -- (8.5,4.5) -- cycle; 
\draw (9.7,4.25) node[right] {Convex sets.};
\draw [line width = 1mm, blue] (8.5,3.25) -- (9.5,3.25);
\draw (9.7,3.25) node[right] {The equality $h(\Om)=P(\Om)$.};
\draw [line width = 1mm, dashed, red] (8.5,2.25) -- (9.5,2.25);
\draw (9.7,2.25) node[right] {The equality $h(\Om)=\frac{P(\Om)}{2}+\sqrt{\pi}$.};

\fill[color=yellow] (1,1) -- (7,1) -- (7,6) -- (6,6) -- cycle; 
\fill[color=gray!20,pattern=vertical lines] (1,1) -- (7,4) -- (7,6) -- (6,6) -- cycle;
\draw[->] (-1,0) -- (7,0);
\draw (7,0) node[right] {$P(\Omega)$};
\draw [->] (0,-1) -- (0,6);
\draw (0,6) node[left] {$h(\Omega)$};
\draw [dashed] (1,6) -- (1,0) node[below] {$P(B)$};
\draw [dashed] (7,1) -- (0,1) node[left] {$h(B)$};
\draw [domain=1:7, dashed, line width = 1mm, red] plot(\x,{.5*\x+.5});
\draw [domain=1:6 , line width = 1mm, blue] plot(\x,{\x});
\draw (8,1.5) -- (15,1.5) -- (15,6) -- (8,6) -- cycle;

\end{tikzpicture}
\captionof{figure}{The Blaschke--Santal\'o diagrams for the classes of simply connected sets and convex sets.}
\label{fig:diagrams}
\end{center}


We note that by taking advantage of the inequalities \eqref{eq:isoperimetric} and \eqref{eq:faber-krahn}, it is also classical to represent the Blaschke--Santal\'o diagrams as subsets of $[0,1]^2$. In our situation, this means to consider the sets
$$\D'_\mathcal{F}:= \left\{\left(X,Y\right)\ |\ \exists \Om\in \mathcal{F}\ \text{such as}\ |\Om|=1\ \text{and}\ \left(X,Y\right)=\left(\frac{P(B)}{P(\Om)},\frac{h(B)}{h(\Om)}\right)\right\}\subset [0,1]^2,$$
where $\mathcal{F}$ is a given class of planar sets. With this parametrization, the Blaschke--Santal\'o diagrams for the classes $\S^2$ and $\K^2$ are given by the following sets:
 $$
\left \{
\begin{array}{c @{=} c}
    \D'_{\mathcal{S}^2}\ \ \ &\ \ \ \{(1,1)\}\cup\{(X,Y)\ |\ X\in (0,1)\ \text{and}\ X\leq Y<1\}, \vspace{3mm}\\ 
    \D'_{\mathcal{K}^2}\ \ \ &\ \{(X,Y)\ |\ X\in (0,1]\ \text{and}\ X\leq Y\leq \frac{2X}{1+X}\},
\end{array}
\right.
    $$
    which are represented in Figure \ref{fig:diagrams_0_1}. 

\begin{center}
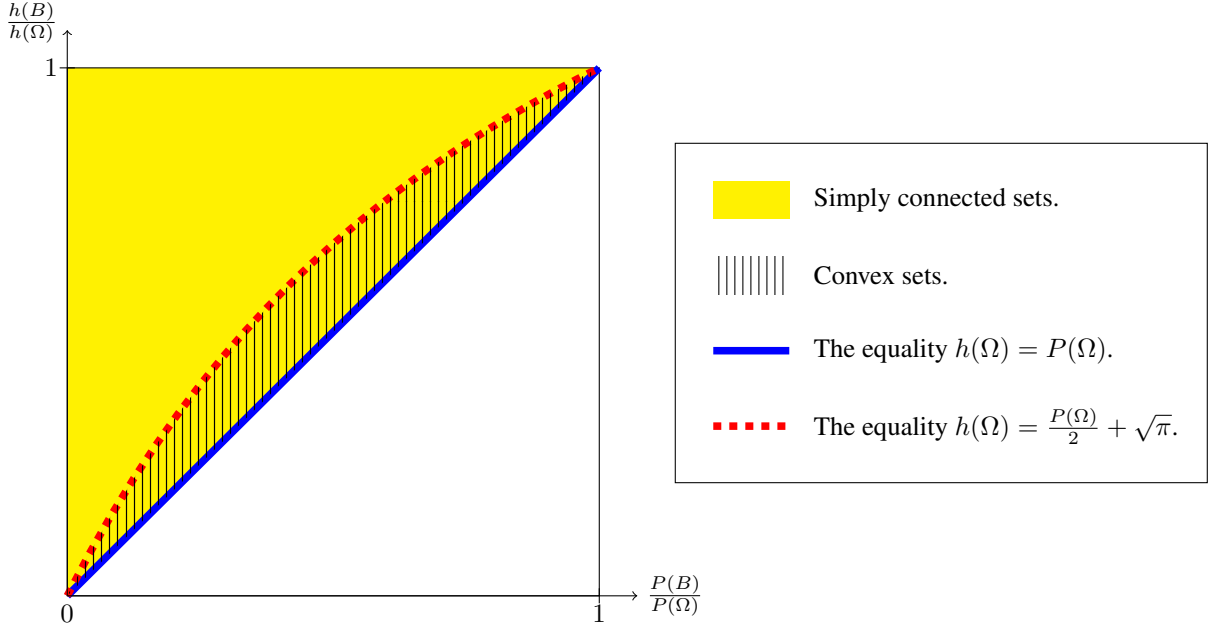

\begin{tikzpicture}
\fill[color=yellow] (8.5,5) -- (9.5,5) -- (9.5,5.5) -- (8.5,5.5) -- cycle; 
\draw (9.7,5.25) node[right] {Simply connected sets.};
\fill[color=gray!20,pattern=vertical lines] (8.5,4) -- (9.5,4) -- (9.5,4.5) -- (8.5,4.5) -- cycle; 
\draw (9.7,4.25) node[right] {Convex sets.};
\draw [line width = 1mm, blue] (8.5,3.25) -- (9.5,3.25);
\draw (9.7,3.25) node[right] {The equality $h(\Om)=P(\Om)$.};
\draw [line width = 1mm, dashed, red] (8.5,2.25) -- (9.5,2.25);
\draw (9.7,2.25) node[right] {The equality $h(\Om)=\frac{P(\Om)}{2}+\sqrt{\pi}$.};
\draw (8,1.5) -- (15,1.5) -- (15,6) -- (8,6) -- cycle;
\fill[color=yellow] (0,0) -- (7,7) -- (0,7) -- cycle; 
\draw (0,0) -- (7,0) -- (7,7) -- (0,7) -- cycle;
\draw [domain=0:7 , line width = 1mm, blue] plot(\x,{\x});
\draw [domain=0:7 , line width = 1mm, dashed, red] plot(\x,{14*(\x/7)/(1+\x/7)});
\fill [pattern=vertical lines] (0,0) -- plot [domain=0:7] (\x,{14*(\x/7)/(1+\x/7)}) -- (7,7) -- cycle;
\draw [->] (0,0)--(7.5,0);
\draw [->] (0,0)--(0,7.5);
\draw (7.5,0) node[right] {$\frac{P(B)}{P(\Omega)}$};
\draw (7,0) node[below] {$1$};
\draw (0,7) node[left] {$1$};
\draw (0,0) node[below] {$0$};
\draw (0,7.6) node[left] {$\frac{h(B)}{h(\Omega)}$};
\draw  (7,-.1)--(7,0.1);
\draw  (-.1,7)--(0.1,7);
\end{tikzpicture}
\captionof{figure}{The Blaschke--Santal\'o diagrams for the classes of simply connected sets and convex sets represented in $[0,1]^2$.}
\label{fig:diagrams_0_1}
\end{center}

Let us give some comments on the latter results:
\begin{itemize}
    \item One major step in the study of the diagram of convex sets is to prove the following sharp inequality

    \begin{equation}\label{eq:new_inequality}
    \forall \Om \in \K^2,\ \ \ \ \  h(\Om)\ge \frac{P(\Om)+\sqrt{4\pi |\Om|}}{2|\Om|},
    \end{equation}
    where equality occurs for example for circumscribed polygons (i.e., those whose sides touch their incircles) and more generally for sets which are homothetical to their form bodies\footnote{We refer to  \cite[Page 386]{schneider} for the definition of form bodies.}. 
    \item The inequality \eqref{eq:new_inequality} is rather easy to prove when the convex $\Om$ is a Cheeger-regular polygon (that is, its Cheeger set touches all of its sides), see \cite[Remark 32]{bucur_fragala}, but much difficult to prove for general convex sets as shown in the present paper (see Section \ref{ss:proof_of_new_inequality}). We also note that this inequality may be seen as a quantitative isoperimetric inequality for the Cheeger constant of convex planar sets. Indeed, it can be written in the following form
    $$\forall \Om \in \K^2,\ \ \ \ \  |\Om|^{1/2}h(\Om) - |B|^{1/2}h(B)\ge \frac{1}{2}\left(\frac{P(\Om)}{|\Om|^{1/2}}-\frac{P(B)}{|B|^{1/2}}\right)\ge 0.$$
    We refer to \cite{MR2480287,julin2020quantitative} for some examples of quantitative inequalities for the Cheeger constant. 
    
    Moreover, we note in Section \ref{ss:improvement_of_inequalities} that the inequality \eqref{eq:new_inequality} is stronger than a classical result \cite[Theorem 3]{MR883424} due to R. Brooks and P. Waksman. It also improves in the planar case a more recent estimate given in \cite[Corollary 5.2]{brasco_makai}, which states that for any open, bounded and convex set $\Om\subset\R^n$, where $n\ge2$, one has
    $$h(\Om)\ge \frac{1}{n}\cdot \frac{P(\Om)}{|\Om|}.$$
    \item We note that the first statement of Theorem \ref{th:diagram_open} asserts that the inequalities \eqref{eq:trivial} and \eqref{eq:faber-krahn} form a complete system of inequalities of the triplet $(P,h,|\cdot|)$ in any class of planar sets that contains $\mathcal{S}^2$. Meanwhile, the second one asserts that this is no longer the case for the class $\K^2$ of planar and convex sets, where estimates \eqref{eq:trivial} and \eqref{eq:new_inequality} are shown to be forming a complete system of inequalities for the triplet $(P,h,|\cdot|)$.
    \item { One could wonder why we chose to work with the class of simply connected sets. The main reason is that for any subclass of measurable domains that contains the simply connected ones, the diagram is the same. Indeed, if we denote by $\mathcal{C}^2$ a subclass of planar and measurable sets that contains the class $\S^2$, we have by the inequalities \eqref{eq:trivial} and \eqref{eq:faber-krahn} (where the equality holds in \eqref{eq:faber-krahn} only for balls) 
    $$\D_{\mathcal{C}^2}\subset \{(x_0,x_0)\}\cup\left\{(x,y)\ \ |\ \ x> x_0\ \ \ \text{and}\ \ \ x_0<y\leq x\right\} = \D_{\S^2}.$$
    Moreover, the inclusion $\S^2\subset\mathcal{C}^2$ implies that $\D_{\S^2}\subset \D_{\mathcal{C}^2}$, which proves the equality. }
    \item We finally note that due to technical convenience, we first show the second assertion (the case of convex sets) and then use it to prove the first one (the case of simply connected sets). 
\end{itemize}

\subsection{Results on the classes of convex polygons}

Now, let us focus on the class of convex polygons. We give an improvement of the inequality \eqref{eq:trivial} in the class $\p_N$ of convex polygons with at most $N$ sides,  where $N\ge 3$. We recall that since triangles are { circumscribed} polygons, one has
\begin{equation}\label{eq:triangles_cheeger}
    \forall \Om\in \p_3,\ \ \ h(\Om) = \frac{P(\Om)+\sqrt{4\pi|\Om|}}{2|\Om|},
\end{equation}
see the discussion below \cite[Theorem 3]{kawohl}. 

As for the case $N\ge 4$, we prove the following sharp inequality
\begin{equation}\label{eq:uper_bound}
\forall \Om\in \p_N,\ \ \ \ \ \ \ h(\Om)\leq \frac{P(\Om)+\sqrt{P(\Om)^2+4\big(\pi-N\tan{\frac{\pi}{N}}\big) |\Om|}}{2|\Om|},
\end{equation}
with equality if and only if $\Om$ is Cheeger-regular { (i.e., all its sides touch its Cheeger set $C_\Om$)} and all of its angles are equal (to $(N-2)\pi/N$). The equality is also asymptotically attained  by the following family $\big([0,1]\times [0,d]\big)_{d\ge1}$ of rectangles when $d$ goes to infinity. \vspace{2mm}

{ In order to simplify the notation, we denote by $\D_N:=\D_{\p_N}$ the Blaschke--Santal\'o diagram of the triplet $(P,h,|\cdot|)$ for the class of convex polygons of at most $N$ sides, see Definition \ref{def:diagrams} for the notion of Blaschke--Santal\'o diagrams. \vspace{2mm}

As for the case of convex sets, we introduce the notion of upper and lower boundaries of the diagram $\D_N$. 
\begin{definition}\label{def:boundary_polygons}
Let $N\ge 3$, we recall that $R_N$ corresponds to a regular polygon of $N$ sides and unit area. We define the upper boundary of the diagram $\D_N$ as follows: 
$$\{(x,g_N(x))\ |\ x\ge P(R_N)\},$$
where $$g_N:x\longmapsto  \sup\{h(\Omega)\ |\ \Om\in \p_N,\ P(\Om)=x\ \text{and}\ |\Om|=1\}\in \R,$$
and the lower boundary of the diagram $\D_N$ as follows: 
$$\{(x,y_x)\ |\ x\ge P(R_N)\},$$
where $$y_x:= \inf\{h(\Omega)\ |\ \Om\in \p_N,\ P(\Om)=x\ \text{and}\ |\Om|=1\}\in \R.$$

\end{definition}

\begin{theorem}\label{th:diagram_polygon}
Let $N\ge 3$. The diagram $\D_N:=\D_{\p_N}$ contains its upper and lower boundaries. Moreover, the lower boundary is given by the half line 
$$\left\{\left(x,\frac{x}{2}+\sqrt{\pi}\right)\ |\ x\ge P(R_N)\right\},$$
and the upper boundary is given by the graph of the continuous and strictly increasing function $g_N$
$$\left\{\left(x,g_N(x)\right)\ |\ x\ge P(R_N)\right\}.$$
Moreover, we have the following cases:
\begin{itemize}
\item if $N=3$, we have 
$$\D_3=\left\{\left(x,\frac{x}{2}+\sqrt{\pi}\right)\Big|\ \ x\ge P(R_3)\right\}.$$
\item If $N$ is even, then 
$$\mathcal{D}_N = \left\{(x,y)\ |\ x\ge P(R_N)\ \ \text{and}\ \ \frac{x}{2}+\sqrt{\pi}\leq y\leq g_N(x)\right\},$$
and 
$$\forall x\ge P(R_N),\ \ \ \ g_N(x) = \frac{x+\sqrt{x^2+4(\pi-N\tan{\frac{\pi}{N}})}}{2}.$$
\item If $N\ge 5$ is odd, there exist $c_N\ge b_N>P(R_N)$ such that $$\forall x\in [P(R_N),b_N],\ \ \  g_N(x)=\frac{x+\sqrt{x^2+4(\pi-N\tan{\frac{\pi}{N}})}}{2}$$ and $$\forall x\in\big[c_N,+\infty\big),\ \ \ g_N(x) < \frac{x+\sqrt{x^2+4(\pi-N\tan{\frac{\pi}{N}})}}{2}.$$
Moreover $$g_N(x)\underset{x\rightarrow +\infty}{\sim}x.$$
\end{itemize}

\end{theorem}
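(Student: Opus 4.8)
The plan is to handle the lower boundary, the general properties of the upper boundary, and then the three regimes ($N=3$, $N$ even, $N\ge5$ odd) in turn, with the last being where the real work sits. Throughout I write $\phi_N(x):=\frac{x+\sqrt{x^2+4(\pi-N\tan\frac{\pi}{N})}}{2}$, so that \eqref{eq:uper_bound} at unit area reads $g_N(x)\le \phi_N(x)$, and I note $\phi_N'(x)>1$ (because $4(\pi-N\tan\frac{\pi}{N})<0$) and $\phi_N(x)\sim x$ as $x\to+\infty$.

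\emph{Lower boundary.} Since $\p_N\subset\K^2$, Theorem \ref{th:diagram_open}(2) immediately confines $\D_N$ to the strip $\frac{x}{2}+\sqrt{\pi}\le y\le x$, while the isoperimetric inequality for polygons (perimeter minimized by the regular $N$-gon) gives $x\ge P(R_N)$. The lower bound $y\ge \frac{x}{2}+\sqrt{\pi}$ is exactly \eqref{eq:new_inequality} at unit area, with equality for circumscribed polygons. To realize the whole half line I would exhibit a continuous one-parameter family of circumscribed polygons with at most $N$ sides and unit area, starting at $R_N$ (perimeter $P(R_N)$) and with perimeter tending to $+\infty$ --- for instance elongating $R_N$ through circumscribed $N$-gons, or passing to degenerating triangles for the tail --- and apply the intermediate value theorem to the (continuous) perimeter. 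As every member satisfies equality in \eqref{eq:new_inequality}, this sweeps out $\{(x,\frac{x}{2}+\sqrt{\pi})\ |\ x\ge P(R_N)\}\subset\D_N$.

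\emph{Upper boundary: attainment, continuity, monotonicity.} For fixed $x$, the unit-area perimeter-$x$ polygons of $\p_N$ have diameter $\le x/2$ and are closed under Hausdorff limits within $\p_N$ (a limit of $\le N$-gons is a $\le N$-gon, and $P$, $|\cdot|$ are Hausdorff-continuous on convex bodies), hence form a compact family; by Blaschke selection together with the continuity of $\Omega\mapsto h(\Omega)$ on convex bodies, the supremum defining $g_N(x)$ is attained, so the upper boundary lies in $\D_N$. Continuity of $g_N$ would then follow from upper semicontinuity (the same compactness) plus a matching lower bound from perturbing an optimizer, and strict monotonicity from a local deformation of an optimizer that simultaneously increases $P$ and $h$ (a controlled elongation). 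To fill the region between the two boundaries I would, at each fixed $x$, continuously deform the lower-boundary (circumscribed) polygon into the upper-boundary optimizer keeping $|\Omega|=1$ and $P=x$, and invoke the intermediate value theorem for $h$ along the path.

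\emph{The three cases.} For $N=3$, \eqref{eq:triangles_cheeger} gives $h=\frac{x}{2}+\sqrt{\pi}$ for every triangle, so the two boundaries coincide and $\D_3$ is the half line. For $N$ even, the equality case of \eqref{eq:uper_bound} (all angles equal to $(N-2)\pi/N$ and Cheeger-regular) can be met for every $x\ge P(R_N)$: the edge directions then occur in antipodal pairs, so one can build a stretchable centrally symmetric equiangular family (generalizing rectangles, which give the case $N=4$) realizing every perimeter at unit area; after verifying that these polygons are Cheeger-regular, equality in \eqref{eq:uper_bound} yields $g_N=\phi_N$, and the vertical filling above gives the full region. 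The case $N\ge5$ odd is where I expect the main obstacle. Near $R_N$ the equiangular family survives small perturbations while staying Cheeger-regular, so the equality locus of \eqref{eq:uper_bound} is nonempty on an initial interval, giving $g_N=\phi_N$ on $[P(R_N),b_N]$. The difficulty is the tail: for odd $N$ the edge directions admit no antipodal pairing, which is precisely what obstructs sustaining equiangularity together with Cheeger-regularity as the perimeter grows; the hard step is to prove that beyond some $c_N\ge b_N$ no equiangular Cheeger-regular unit-area $N$-gon of that perimeter exists, forcing $g_N(x)<\phi_N(x)$ on $[c_N,+\infty)$ (and leaving the gap $(b_N,c_N)$ only numerically accessible, as the abstract anticipates). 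Finally $g_N(x)\sim x$: the bound $g_N\le\phi_N$ with $\phi_N(x)\sim x$ gives one inequality, while thin rectangles, which lie in $\p_4\subset\p_N$, produce points with $y/x\to1$, supplying the matching lower bound by a squeezing argument.
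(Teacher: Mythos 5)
Your overall architecture (compactness for attainment of $g_N$, semicontinuity for its continuity, explicit extremal families plus the intermediate value theorem for the boundaries, a squeeze for the asymptotics) matches the paper's, but the proposal leaves unproven exactly the two steps where the real work lies. First, strict monotonicity: your entire argument is ``a local deformation of an optimizer that simultaneously increases $P$ and $h$ (a controlled elongation).'' Such a deformation does not obviously exist for an arbitrary maximizer, and producing it is the heart of the paper's proof: one argues by contradiction at an interior local maximum $p^*$ of $g_N$, shows the optimizer $\Om_{p^*}$ cannot be circumscribed (since $g_N(p^*)>p^*/2+\sqrt{\pi}$), and then shows that \emph{every non-circumscribed} convex $N$-gon admits a parallel displacement of a single side that strictly increases $|\Om|^{1/2}h(\Om)$. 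This last step splits into two cases according to whether the displaced side touches the Cheeger set, and in the Cheeger-regular case it requires a first-variation computation of $P^2/|\Om|$ (the quantities $\Psi_i$), whose non-vanishing for some $i$ rests on the equality case of $T(\Om)\leq P(\Om)^2/(4|\Om|)$ in Theorem \ref{th:T_estimates}. None of this appears in your proposal. Second, the existence of $c_N$ for odd $N\ge 5$: you explicitly defer ``the hard step,'' offering only the heuristic that edge directions admit no antipodal pairing. The paper proves the precise quantitative fact that unlocks this case: for odd $N$, every unit-area $N$-gon whose angles all equal $\beta_N=(N-2)\pi/N$ satisfies $P(\Om)\leq 2N\sqrt{\tan(\beta_N/2)}$ (inequality \eqref{ineq:c_N}, proved by an elementary argument using the unique highest vertex, which is where oddness enters); note that Cheeger-regularity plays no role here, contrary to what your heuristic suggests. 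Strictness of $g_N<f_N$ beyond that threshold then follows from the strict inequality $T(\Om)>N\tan(\pi/N)$ for non-equiangular polygons. Without this bound, the main novel assertion of the odd case is simply not established.

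There is a third, structural gap: to fill the region between the two boundaries you propose, at each fixed $x$, to deform the circumscribed polygon into the optimizer ``keeping $|\Omega|=1$ and $P=x$.'' This presupposes that the set of unit-area, perimeter-$x$ convex $N$-gons is path-connected in the Hausdorff metric --- an unproven and non-obvious claim, since two scalar constraints must be maintained along the entire path while staying in $\p_N$. The paper is built precisely to avoid this: it interpolates the vertices of the lower and upper extremal polygons (a path along which the perimeter is \emph{not} fixed), obtains closed continuous curves, and concludes with a homotopy/winding-number argument. Smaller omissions: the liminf half of the continuity of $g_N$ needs a concrete approximation (the paper uses the parallel chord movements of Lemma \ref{lem:dorin} to produce competitors of prescribed perimeter $p_n$ converging to $\Om_{p_0}$), and your even-$N$ family needs the Cheeger-regularity check you flag but do not carry out. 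On the positive side, your centrally symmetric equiangular family built from antipodal edge pairs is a sound realization of the equality case of \eqref{eq:uper_bound} for even $N$, and your squeeze between $f_4$ and $f_N$ for $g_N(x)\sim x$ is correct (the paper squeezes with $f_{N-1}$ instead); these parts genuinely agree with, or slightly streamline, the paper's route.
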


Let us give some comments on the latter results: 
\begin{itemize}
    \item For every $N\ge 3$, we have the inclusion $\D_N\subset\D_{N+1}$ (because $\p_N\subset \p_{N+1})$. Moreover, we notice that the right hand side of the inequality \eqref{eq:uper_bound}  (monotonically) converges to the right hand side of the inequality \eqref{eq:trivial}. Thus, one can recover the diagram $\D_{\K^2}$ of convex sets as the limit of $\D_N$. In fact, one can show that 
    $$\D_{\K^2} = \overline{\bigcup_{N=3}^{+\infty} \D_N},$$
    where $\overline{\bigcup_{N=3}^{+\infty} \D_N}$ corresponds to the closure of $\bigcup_{N=3}^{+\infty} \D_N$.
    \item It is interesting to note that Theorem \ref{th:diagram_polygon} shows that the inequalities  \eqref{eq:new_inequality} and \eqref{eq:uper_bound} form a complete system of inequalities of the triplet $(P,h,|\cdot|)$ in the class $\p_N$ if and only if $N$ is even. 
    \item We believe that $c_N = b_N$ for every $N$ odd and greater or equal than $5$. To prove it, one could be tempted to solve the shape optimization problems 
    $$\max\{h(\Om)\ |\ |\Om|=1,\ P(\Om)=p_0\ \text{and}\ \Om\in \p_N\},$$
    where $p_0\in [P(R_N),+\infty)$. Unfortunately, this seems to be quite challenging in this case, since numerical simulations (see Section \ref{s:numeric}) suggest that the solutions are not unique and for large values of $p_0$ they are not necessarily Cheeger-regular (i.e., all its sides touch its Cheeger set $C_\Om$) and thus one does not have an explicit formula for their Cheeger constants, see Theorem \ref{th:KLR_polygon} and Lemma \ref{lem:jimmy}. Another possible strategy to prove the equality $c_N=b_N$ could be to show that one can continuously deform any Cheeger-regular $N$-gon $\Om$ whose angles are all equal into a regular $N$-gon while making sure to preserve these two properties throughout the process.
    \item We note that we are able to prove the following estimate $$c_N\leq 2N\sqrt{\tan{\frac{(N-2)\pi}{2N}}},$$
    see \eqref{ineq:c_N}.

\end{itemize}
}

The present paper is organized as follows: Section \ref{s:lemmas} contains two subsections, in the first one we recall some classical definitions and results needed for the proofs, in the second one we state and prove some preliminary lemmas which are also interesting { on their own}. The proofs of the main theorems are given in Section \ref{s:proofs}. Then, we provide some numerical simulations on the diagrams $\D_N$ in Section \ref{s:numeric}. Finally, we give some applications of the results of our results in Section \ref{s:remarks}. 

\section{Classical results and preliminaries}\label{s:lemmas}
\subsection{Classical { definitions and} results}
In this subsection, we recall some classical { definitions and} results that are used in the paper. 
\begin{theorem}\cite[Theorem 2 and Remark 3]{web}\label{th:T_estimates}\\
Take $N\ge 3$ and $\Om\subset \R^2$ a convex polygon of $N$ sides. We define: 
\begin{equation}\label{eq:def_T}
T(\Om) := \sum_{i=1}^N \frac{1}{\tan \frac{\alpha_i}{2}},    
\end{equation}
where $\alpha_i\in (0,\pi]$ are the interior angles of $\Om$.  We have the following estimates: 
\begin{equation}\label{eq:fragala}
N \tan{\frac{\pi}{N}} \leq T(\Om) \leq \frac{P(\Om)^2}{4|\Om|}.
\end{equation}
The lower bound is attained if and only if all the angles $\alpha_i$ are equal (to $\frac{N-2}{2N}\pi$), meanwhile the upper one is an equality if and only the polygon $\Om$ is circumscribed. 
\end{theorem}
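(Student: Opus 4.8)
The plan is to prove the two estimates by entirely different means: the lower bound is a one-line convexity argument, while the upper bound requires the geometry of inner parallel sets.

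For the lower bound, I would start from the angle-sum relation for a convex $N$-gon, $\sum_{i=1}^N \alpha_i = (N-2)\pi$, and set $\beta_i = \alpha_i/2 \in (0,\pi/2]$, so that $\sum_i \beta_i = \tfrac{(N-2)\pi}{2}$ and $T(\Om) = \sum_i \cot \beta_i$. On $(0,\pi/2)$ the cotangent is strictly convex, since $(\cot)''(x) = 2\cot x/\sin^2 x > 0$ there. Jensen's inequality then gives $T(\Om) \ge N\cot\!\big(\tfrac1N\sum_i\beta_i\big) = N\cot\!\big(\tfrac{\pi}{2}-\tfrac{\pi}{N}\big) = N\tan\tfrac{\pi}{N}$, with equality, by strict convexity, if and only if all the $\beta_i$ coincide, i.e. all interior angles are equal. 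This settles the first inequality and its equality case.

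For the upper bound I would work with the inner parallel sets $\Om_{-t} := \{x\in\Om : \mathrm{dist}(x,\partial\Om)\ge t\}$ and the area profile $a(t):=|\Om_{-t}|$, $t\in[0,r(\Om)]$. The polygonal Steiner formula gives $a(t) = |\Om| - P(\Om)\,t + T(\Om)\,t^2$ as long as no side of $\Om_{-t}$ has vanished, whence $a'(t) = -P(\Om_{-t})$ and $a''(t) = 2T(\Om_{-t})$; moreover $a$ is $C^1$ (the perimeter stays continuous through the erosion), convex, decreasing, with $a(0)=|\Om|$, $a'(0)=-P(\Om)$ and $a(r(\Om))=0$. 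The key observation is that the target inequality $4|\Om|\,T(\Om)\le P(\Om)^2$ is exactly the nonnegativity of the discriminant of the quadratic $q(t):=T(\Om)t^2 - P(\Om)t + |\Om|$, so it suffices to exhibit one $t$ with $q(t)\le 0$. To this end I would prove that $t\mapsto T(\Om_{-t})$ is nondecreasing: between two vanishing events the eroded polygon keeps the same edge directions, hence the same angles and the same value of $T$; at an event where a side shrinks to a point, its two neighbours (of angles $\alpha_A,\alpha_B$ with $\alpha_A+\alpha_B>\pi$) merge into a vertex of angle $\gamma=\alpha_A+\alpha_B-\pi$, and the jump $\cot\tfrac{\gamma}{2}-\cot\tfrac{\alpha_A}{2}-\cot\tfrac{\alpha_B}{2}$, using $\cot\tfrac{\gamma}{2}=-\tan\tfrac{\alpha_A+\alpha_B}{2}$ and sum-to-product identities, reduces to the manifestly positive quantity controlled by $\cos\tfrac{\alpha_A}{2}\cos\tfrac{\alpha_B}{2}>0$. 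Consequently $a''(t)=2T(\Om_{-t})\ge 2T(\Om)=q''(t)$ for all $t$, and since $a$ and $q$ share the same value and derivative at $t=0$, integrating twice yields $a\ge q$ on $[0,r(\Om)]$; evaluating at $t=r(\Om)$ gives $q(r(\Om))\le a(r(\Om))=0$, so $q$ has a real root and the discriminant $P(\Om)^2-4T(\Om)|\Om|$ is nonnegative.

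For the equality case I would argue that a vanishing discriminant forces $q\ge 0$ with a double root; then $a\ge q\ge 0$ together with $a(r(\Om))=0$ and the convexity of $a-q$ (nonnegative and vanishing at both endpoints) force $a\equiv q$, meaning no side disappears before the simultaneous collapse at $t=r(\Om)$ — equivalently, all sides are tangent to a common incircle, i.e. $\Om$ is circumscribed; the converse follows from the direct identities $P=2r\,T$ and $|\Om|=r^2 T$ valid for circumscribed polygons. The main obstacle I anticipate is precisely the monotonicity of $T$ under erosion, together with the bookkeeping of the vanishing events: one must check that any event producing a genuine smaller polygon satisfies $\alpha_A+\alpha_B>\pi$ (otherwise the set has already collapsed to a point or a segment, as it does for every triangle, in which case $a\equiv q$ holds trivially and equality is attained), and one must accommodate possible simultaneous vanishings. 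Everything else is a routine consequence of the discriminant reformulation and the second-order comparison principle.
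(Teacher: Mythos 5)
Your proposal is correct in substance, but note that the paper itself contains no proof of Theorem \ref{th:T_estimates}: the statement is imported from \cite{web}, and the only proof content offered is the remark that the lower bound is a simple application of Jensen's inequality to $\cotan$, strictly convex on $(0,\pi/2)$ --- which is exactly your half-angle argument, equality case included. Your upper bound is therefore being compared against a citation rather than an argument, and the route you chose is a genuine, self-contained alternative. Interestingly, it runs on precisely the machinery the paper deploys for a different purpose in Lemma \ref{lem:jimmy}: the polygonal Steiner formulas for $P(\Om_{-t})$ and $|\Om_{-t}|$, and the monotonicity of $T$ along inner parallel sets, which the paper cites from equation (14) of \cite{kawohl} but which you prove directly. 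That computation is right: with $\gamma=\alpha_A+\alpha_B-\pi$, the jump $\cot\frac{\gamma}{2}-\cot\frac{\alpha_A}{2}-\cot\frac{\alpha_B}{2}$ equals $\sin\frac{\alpha_A+\alpha_B}{2}\cos\frac{\alpha_A}{2}\cos\frac{\alpha_B}{2}$ divided by the positive quantity $\bigl(-\cos\frac{\alpha_A+\alpha_B}{2}\bigr)\sin\frac{\alpha_A}{2}\sin\frac{\alpha_B}{2}$, hence is nonnegative (strictly positive when $\alpha_A,\alpha_B<\pi$). The comparison step is also sound: $a(t)=|\Om_{-t}|$ is $C^1$ with $a'=-P(\Om_{-t})$ and piecewise quadratic, so $a-q$ is convex and vanishes to first order at $t=0$, giving $a\ge q$ on $[0,r(\Om)]$ and hence $q(r(\Om))\le a(r(\Om))=0$, which is the discriminant inequality $P(\Om)^2\ge 4T(\Om)|\Om|$.

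One step of your equality case is, however, asserted as an equivalence that is false as stated: that $a\equiv q$ (no side disappearing before the collapse at $t=r(\Om)$) is \emph{equivalent} to $\Om$ being circumscribed. A non-square rectangle with sides $\ell_1>\ell_2$ is a counterexample: no side vanishes before $t=r(\Om)=\ell_2/2$, the Steiner quadratic is exact on all of $[0,r(\Om)]$ so $a\equiv q$, yet the rectangle is not circumscribed --- consistently, its discriminant equals $4(\ell_1-\ell_2)^2>0$ and $\Om_{-r(\Om)}$ is a segment, not a point. The repair uses information you already have, namely the double root. When $P(\Om)^2=4T(\Om)|\Om|$ one has $q(t)=T(\Om)(t-t_0)^2$, and $q(r(\Om))=a(r(\Om))=0$ forces $t_0=r(\Om)$; therefore $P(\Om_{-t})=-a'(t)=-q'(t)=2T(\Om)\bigl(r(\Om)-t\bigr)\to 0$, so the inner sets shrink to a single point $x_0$. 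Since no side has vanished, every side line of $\Om$ lies at distance exactly $r(\Om)$ from $x_0$, and the tangency point of $B\bigl(x_0,r(\Om)\bigr)$ with that line lies in $\overline{\Om}$, hence on the side itself; thus all $N$ sides are tangent to a common incircle and $\Om$ is circumscribed. With that sentence added, your proof is complete; the converse identities $P(\Om)=2r(\Om)T(\Om)$ and $|\Om|=r(\Om)^2T(\Om)$ for circumscribed polygons are correct as you state them.
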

\begin{remark}
The lower bound is a simple application of Jensen's inequality to the function $\cotan$ which is strictly convex on $(0,\pi/2)$. On the other hand, since $N \tan \frac{\pi}{N}>\pi$, the upper estimate may be seen as an improvement of the isoperimetric inequality for convex polygons. We refer for example to \cite{web} for a detailed proof of Theorem \ref{th:T_estimates}. 
\end{remark}

{ 
\begin{definition}\label{def:minkowski_sums}
Let $A$ and $B$ be two subsets of $\mathbb{R}^2$ and $t> 0$, we define
\begin{itemize}
    \item the Minkowski sum of the sets $A$ and $B$ by $$A\oplus B:= \{x+y\ |\ x\in A\ \text{and}\ y\in B\},$$
    \item and the dilatation of the set $A$ by the positive coefficient $t$ by $$t A:= \{t x\ |\ x\in A\}.$$ 
\end{itemize}
\end{definition}}

{
\begin{definition}\label{def:inner}
For a given planar set $\Omega$ and any $x\in \Omega$, we denote by $\textrm{dist}(x,\partial \Omega)$ the distance from $x$ to the boundary of $\Omega$, and for $t\ge 0$, we denote by $\Omega_{-t}$ the sets of points of $\Om$ of distance at least $t$ to the boundary, i.e., $$\Omega_{-t}:= \{x\in \Omega|\ \textrm{dist}(x,\partial \Omega)\ge t\}.$$ 
The sets $(\Omega_{-t})_{t\ge 0}$ are known as the inner parallel sets of $\Omega$.
\end{definition}}
Let us now recall some classical and important results on the Cheeger problem for planar convex sets. 
\begin{theorem}\label{th:KLR_convex}\cite[Theorem 1]{kawohl}
{ Let $\Omega$ be a planar convex set.} There exists a unique value $t=t^*>0$ such that $|\Om_{-t}|=\pi t^2$. We also have $h(\Om) = 1/t^*$. Moreover, the Cheeger set of $\Om$ is given by { $$C_\Om = \Om_{-t^*}\oplus t^*B_1,$$} where $B_1$ is the unit disk. 
\end{theorem}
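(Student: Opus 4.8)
The plan is to prove the three assertions — existence and uniqueness of $t^*$, the value $h(\Om)=1/t^*$, and the shape of the Cheeger set — in that order, treating the one-parameter family of ``openings'' $C_t:=\Om_{-t}\oplus tB_1$ (the union of all disks of radius $t$ contained in $\Om$) as the main object. First I would record the elementary facts that, since $\Om$ is convex, each $\Om_{-t}$ is convex and the map $t\mapsto|\Om_{-t}|$ is continuous and non-increasing, vanishing exactly at the inradius $r(\Om)$. Hence $g(t):=|\Om_{-t}|-\pi t^2$ is continuous and \emph{strictly} decreasing on $[0,r(\Om)]$, with $g(0)=|\Om|>0$ and $g(r(\Om))=-\pi r(\Om)^2<0$; the intermediate value theorem together with strict monotonicity yields a unique zero $t^*\in(0,r(\Om))$, which settles the first claim.

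For the value of $h(\Om)$ I would exploit the planar Steiner formulas applied to the convex set $\Om_{-t}$, namely $P(C_t)=P(\Om_{-t})+2\pi t$ and $|C_t|=|\Om_{-t}|+t\,P(\Om_{-t})+\pi t^2$. A direct manipulation gives the identity
\[
\frac{P(C_t)}{|C_t|}-\frac{1}{t}=\frac{\pi t^2-|\Om_{-t}|}{t\,|C_t|}=\frac{-g(t)}{t\,|C_t|},
\]
so that at $t=t^*$ (where $g$ vanishes) the opening $C_{t^*}$ satisfies $P(C_{t^*})/|C_{t^*}|=1/t^*$. Since $C_{t^*}\subset\Om$, the definition of the Cheeger constant immediately gives $h(\Om)\le 1/t^*$.

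The substantive direction is the reverse inequality $h(\Om)\ge 1/t^*$. Here I would invoke the existence of a Cheeger set $C$ (the convex $\Om$ is Lipschitz, so \cite[Proposition 3.1]{MR2832554} applies) together with the fact that $C$ may be taken convex, since replacing $C$ by its convex hull keeps it inside $\Om$ while not increasing the ratio $P/|\cdot|$. The key structural input is the regularity of the free boundary: the portion of $\partial C$ lying in the interior of $\Om$ consists of circular arcs of curvature $h(\Om)$, i.e. of radius $\rho:=1/h(\Om)$. This is the constant–mean–curvature first–variation property of Cheeger sets, and it is precisely the analytic heart of the theorem; combined with convexity it upgrades to the reconstruction $C=C_{-\rho}\oplus\rho B_1$, expressing $C$ as an opening at scale $\rho$. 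Setting $K:=C_{-\rho}$ and using $C=K\oplus\rho B_1\subset\Om$, every $x\in K$ has $B(x,\rho)\subset\Om$, hence $K\subset\Om_{-\rho}$. Feeding $P(C)/|C|=h(\Om)=1/\rho$ into the Steiner formulas forces $|K|=\pi\rho^2$, whence $\pi\rho^2=|K|\le|\Om_{-\rho}|$, i.e. $g(\rho)\ge 0=g(t^*)$. Strict monotonicity of $g$ gives $\rho\le t^*$, that is $h(\Om)=1/\rho\ge 1/t^*$.

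Combining the two bounds yields $h(\Om)=1/t^*$. Consequently the opening $C_{t^*}=\Om_{-t^*}\oplus t^*B_1$ realizes the ratio $1/t^*=h(\Om)$ and is therefore a Cheeger set; by the uniqueness of the Cheeger set for convex domains \cite{cham_1} it must coincide with $C_\Om$, which proves the last claim. The main obstacle is the regularity/reconstruction step of the third paragraph: establishing that the free boundary is built from arcs of radius $1/h$ and that this rigidity forces $C$ to be \emph{exactly} the opening $C_{-\rho}\oplus\rho B_1$ (rather than merely to contain it). Everything else reduces to the Steiner formulas and the monotonicity of $g$.
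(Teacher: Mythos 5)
Your attempt cannot be compared with an argument in the text: the paper states this result as a citation of Kawohl and Lachand-Robert \cite{kawohl} and gives no proof of it, so the proposal must stand on its own. On its own terms, a good deal of it is correct: the existence and uniqueness of $t^*$ via strict monotonicity of $g(t)=|\Om_{-t}|-\pi t^2$, the Steiner-formula identity $\frac{P(C_t)}{|C_t|}-\frac{1}{t}=\frac{-g(t)}{t\,|C_t|}$ for the openings $C_t=\Om_{-t}\oplus tB_1$ (hence $h(\Om)\le 1/t^*$), and the endgame — once $h(\Om)=1/t^*$ is known, $C_{t^*}$ attains the Cheeger ratio and must equal $C_\Om$ by the uniqueness result of \cite{cham_1} — are all sound.

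The genuine gap is the reconstruction step $C=C_{-\rho}\oplus\rho B_1$, and it is worse than a missing technical verification: the implication you invoke is false as stated. Convexity of $C$ plus the first-variation property (free boundary made of arcs of radius $\rho=1/h(\Om)$) says nothing about the contact set $\partial C\cap\partial\Om$, where the CMC condition is vacuous. If $\partial C$ follows a portion of $\partial\Om$ whose curvature exceeds $1/\rho$ (or a corner of $\partial\Om$), then points of that portion lie in no disk of radius $\rho$ contained in $C$, so $C$ cannot be an opening at scale $\rho$; a convex set with CMC free arcs and such a contact set is perfectly possible (round off only some corners of a polygon). Ruling this out for a \emph{minimizer} requires a second variational argument — a cutting/comparison step showing the Cheeger set detaches from any part of $\partial\Om$ of curvature larger than $1/\rho$ and that the free arcs meet $\partial\Om$ tangentially, so that $\partial C$ is $C^{1,1}$ with curvature at most $1/\rho$ everywhere — followed by Blaschke's rolling theorem to convert that curvature bound into the Minkowski decomposition. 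That chain is precisely the substance of \cite{kawohl}; asserting it as a single "upgrade" assumes what is to be proved.

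Note also that the lower bound can be obtained much more cheaply, bypassing existence, regularity and reconstruction altogether. Reduce to convex competitors $E\subset\Om$ (connected component plus convex hull, as you observe). For convex $E$ and $0\le u\le r(E)$ one has $E_{-u}\oplus uB_1\subset E$, hence by monotonicity of perimeter under inclusion of convex sets $P(E_{-u})+2\pi u\le P(E)$; integrating the coarea identity $|E|-|E_{-s}|=\int_0^s P(E_{-u})\,du$ gives $|E|\le |E_{-s}|+sP(E)-\pi s^2$ for $s\le r(E)$. If $r(E)\ge t^*$, take $s=t^*$ and use $E_{-t^*}\subset\Om_{-t^*}$, so $|E|\le \pi (t^*)^2+t^*P(E)-\pi (t^*)^2=t^*P(E)$; if $r(E)<t^*$, take $s=r(E)$, so $|E|\le r(E)P(E)-\pi r(E)^2<t^*P(E)$. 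Either way $P(E)/|E|\ge 1/t^*$ for every competitor, i.e. $h(\Om)\ge 1/t^*$, and together with your upper bound this closes the proof with only elementary convex geometry.
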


\begin{theorem}\cite[Theorem 3]{kawohl}\label{th:KLR_polygon}
If $\Om$ is a Cheeger-regular polygon (i.e., all its sides touch its Cheeger set $C_\Om$), then,
\begin{equation}\label{eq:cheeger_equality}
h(\Om) = \frac{P(\Om)+\sqrt{P(\Om)^2-4\big(T(\Om)-\pi\big)|\Om|}}{2|\Om|}, 
\end{equation}
{ where $T(\Omega)$ is defined in \eqref{eq:def_T}.}
\end{theorem}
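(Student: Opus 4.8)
The plan is to recover this formula from the general characterization of the Cheeger constant of a planar convex set given in Theorem \ref{th:KLR_convex}, combined with an explicit quadratic formula for the areas of the inner parallel sets $\Om_{-t}$ (Definition \ref{def:inner}) of a convex polygon.

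First I would establish that, as long as the inner parallel set $\Om_{-t}$ still carries all $N$ edges of $\Om$ (no edge has yet collapsed), one has the area formula
\begin{equation*}
|\Om_{-t}| = |\Om| - P(\Om)\,t + T(\Om)\,t^2 .
\end{equation*}
To prove this I would note that in this regime $\Om_{-t}$ is the convex polygon obtained by translating each edge of $\Om$ inward along its interior normal by the distance $t$, so its edges are parallel to those of $\Om$ and its interior angles are unchanged. A direct computation at a single vertex of interior angle $\alpha$ shows that the new vertex lies on the angle bisector at distance $t/\sin(\alpha/2)$ from the old one, and that each of the two incident edges is shortened by exactly $t\cot(\alpha/2) = t/\tan(\alpha/2)$ near that vertex. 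Summing over all vertices gives $P(\Om_{-t}) = P(\Om) - 2T(\Om)\,t$, since $T(\Om)=\sum_i 1/\tan(\alpha_i/2)$ by \eqref{eq:def_T}. Integrating the elementary identity $\frac{d}{dt}|\Om_{-t}| = -P(\Om_{-t})$ from $0$ to $t$ then yields the displayed formula. (Alternatively, one may obtain it directly by decomposing $\Om\setminus\Om_{-t}$ into the trapezoidal strips adjacent to each edge.)

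Next I would exploit the Cheeger-regularity hypothesis. By Theorem \ref{th:KLR_convex} there is a unique $t^*>0$ with $|\Om_{-t^*}| = \pi (t^*)^2$, one has $h(\Om)=1/t^*$, and $C_\Om = \Om_{-t^*}\oplus t^* B_1$. Saying that $\Om$ is Cheeger-regular, i.e. that every side of $\Om$ touches $C_\Om$, means precisely that $\Om_{-t^*}$ still carries an edge parallel to each side of $\Om$; hence the polygonal structure is preserved on the whole interval $[0,t^*]$ and the area formula above is valid at $t=t^*$. Substituting it into $|\Om_{-t^*}| = \pi(t^*)^2$ produces the quadratic equation
\begin{equation*}
\big(T(\Om)-\pi\big)(t^*)^2 - P(\Om)\,t^* + |\Om| = 0 .
\end{equation*}
Since $T(\Om)\ge N\tan(\pi/N)>\pi$, the leading coefficient $T(\Om)-\pi$ is positive, and as the left-hand side equals $|\Om|>0$ at $t=0$, the geometrically relevant root (the first instant at which $|\Om_{-t}|$ meets $\pi t^2$) is the smaller one. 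Taking the reciprocal and using that by Vieta's relations the product of the two roots equals $|\Om|/(T(\Om)-\pi)$, so that $1/t^*$ equals the larger root times $(T(\Om)-\pi)/|\Om|$, collapses the expression exactly to \eqref{eq:cheeger_equality}, with the $+$ sign in front of the square root.

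I would expect the main obstacle to be the rigorous justification of the area formula together with the claim that Cheeger-regularity guarantees $t^*$ lies in the non-degenerate range: the computation of the edge shortening at each vertex and its summation into the $T(\Om)$ term must be done carefully, and one must verify that no edge of $\Om_{-t}$ disappears before $t$ reaches $t^*$, which is exactly what "all sides touch $C_\Om$" encodes. By contrast, the final algebraic simplification and the selection of the correct root are routine once the discriminant $P(\Om)^2-4\big(T(\Om)-\pi\big)|\Om|$ is known to be nonnegative, which is guaranteed by the existence of the real value $t^*$ provided by Theorem \ref{th:KLR_convex}.
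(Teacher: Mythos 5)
Your proposal is correct. Note that the paper itself offers no proof of this statement---Theorem \ref{th:KLR_polygon} is quoted directly from Kawohl and Lachand-Robert---but your argument is exactly the machinery the paper deploys elsewhere: the polygonal Steiner formula $|\Om_{-t}| = |\Om| - P(\Om)\,t + T(\Om)\,t^2$, valid as long as no edge collapses, appears as \eqref{eq:steiner_formula_1}--\eqref{eq:steiner_formula_2} in the proof of Lemma \ref{lem:jimmy}, and there too Cheeger-regularity is identified with the zero of $t\mapsto|\Om_{-t}|-\pi t^2$ falling in the first combinatorial interval $[0,t_1]$, after which Theorem \ref{th:KLR_convex} and the choice of the smaller root give the formula. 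Your flagged point of care (that ``every side touches $C_\Om$'' forces $t^*\le t_1$, since once an edge collapses its constraint becomes strictly inactive and that side can no longer touch $C_\Om = \Om_{-t^*}\oplus t^*B_1$) is indeed the only step needing a genuine geometric justification, and it goes through.
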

It is natural to wonder if { the equality \eqref{eq:cheeger_equality} holds for general convex polygons}. In Lemma \ref{lem:jimmy}, we prove that there is only an inequality and that the equality occurs if and only if the polygon is Cheeger-regular. \vspace{2mm}

{ Let us now recall some classical parametrizations of starshaped sets (or convex ones in particular). We refer to \cite[Section 1.7]{schneider} for more details. 
\begin{definition}\label{def:starshaped}
A set $\Omega \subset \R^2$ is called starshaped with respect to a point $A\in \R^2$, if it is not empty and for every $M\in \Omega$, the segment $[AM]$ is included in $\Omega$. 
\end{definition}

Let us now recall some important functions that allow to parametrize a given set $\Omega$. If $\Omega \subset \R^2$ is a compact and starshaped with respect to the origin $0$, we recall that 
\begin{itemize}
    \item its radial function is given by $f_\Om:x\in \R^2\backslash\{0\}\longmapsto f_\Om(x) = \max\{\lambda\ge 0\ |\ \lambda x \in \Om \}$, 
    \item and its gauge function is given by $u_\Om:\R^2\backslash\{0\}\longmapsto\inf\{\lambda\ge 0\ |\ x\in \lambda \Omega\}$.  
\end{itemize}
If $\Omega$ is convex (not necessarily containing the origin), its support function is defined as follows:
$$h_\Om:x\in \R^2\longmapsto \sup\{\langle x,y \rangle\ |\ y\in \Omega\}.$$
Since the functions $f_\Om$, $u_\Om$ and $h_\Om$ satisfy the following scaling properties $f_\Om(t x) = t f_\Om(x)$, $u_\Om(t x) = t^{-1} u_\Om(x)$ and $h_\Om(t x) = t h_\Om(x)$ for $t>0$, they can be characterized by their values on the unit sphere $\mathbb{S}^{1}$ or equivalently on the interval $[0,2\pi)$. 

In the present paper, the radial, gauge and support functions are defined on the interval $[0,2\pi)$ as stated in the following definition.  
\begin{definition}\label{def:parametrizations}
If $\Omega \subset \R^2$ is a compact and starshaped with respect to the origin $0$, 
\begin{itemize}
    \item the radial function of $\Om$ is given by $f_\Om:\theta\in [0,2\pi)\longmapsto \max\{\lambda\ge 0\ |\ \lambda \binom{\cos{\theta}}{ \sin{\theta}}\in \Omega\}$, 
    \item the gauge function of $\Om$ is given by  $u_\Om:[0,2\pi)\longmapsto \inf\{\lambda\ge 0\ |\ \binom{\cos{\theta}}{ \sin{\theta}}\in \lambda \Omega \}$.
\end{itemize}
If $\Omega$ is convex (not necessarily containing the origin), its support function is given by $$h_\Om:[0,2\pi)\longmapsto \sup\left\{\left\langle \binom{\cos{\theta}}{\sin{\theta}}, y \right\rangle\ |\ y\in\Om\right\}.$$
\end{definition}
Let us now give some important remarks on these functions. 
\begin{remark}\label{rk:parametrizations}
\begin{itemize}
\item  The functions introduced above can be defined in higher dimensions. We refer for example to \cite[Section 1.7]{schneider} for more details and results. 
    \item The gauge function is the inverse of the radial function. Indeed, for every $\theta\in [0,2\pi)$, we have $u_\Om(\theta) = 1/f_\Om(\theta)$.
    \item It is interesting to note that the gauge and support functions allow to provide a simple characterization of the convexity of a set $\Om$. Indeed, $\Om$ is convex if and only if $h_\Om''+h_\Om\ge 0$, which is also equivalent to $u_\Om''+u_\Om\ge 0$.
    \item The support function has some nice properties: 
    \begin{itemize}
        \item it is linear for the Minkowski sum and dilatation. Indeed, if $ \Om_1$ and $\Om_2$ are two convex bodies and $\alpha, \beta >0$, we have  
    $$h_{\alpha \Om_1 + \beta \Om_2}= \alpha h_{\Om_1}+ \beta h_{\Om_2},$$
    see \cite[Section 1.7.1]{schneider}.
        \item It also provides elegant formulas for some geometrical quantities. Fro example the perimeter of a convex body $\Om$ is given by 
        $$P(\Om) = \int_0^{2\pi}h_\Om(\theta)d\theta,$$
        and the Hausdorff distance between two convex bodies $\Om_1$ and $\Om_2$ is given by 
    $$d^H(\Om_1,\Om_2)=\sup_{\theta\in [0,2\pi)} |h_{\Om_1}(\theta)-h_{\Om_2}(\theta)|,$$
    see \cite[Lemma 1.8.14]{schneider}. 
    \end{itemize}
\end{itemize}
\end{remark}

At last, let us recall a classical result on the area of Minkowski sums. For more details and general results, we refer to \cite{schneider}. 
\begin{theorem}\label{th:brunn-minkowski}
There exist 3 bilinear (for Minkowski sum and dilatation) forms $W_k: \K^2\times \K^2\longrightarrow \mathbb{R}$, for $k\in  [\![0;2]\!]$, named Minkowski mixed volumes, such that for every  $K_1,K_2\in\K^2$  and $t_1,t_2>0$, we have
\begin{equation}\label{eq:mixed-volume}
|t_1K_1+t_2K_2|= t_1^2W_0(K_1,K_2)+2t_1t_2W_1(K_1,K_2)+t_2^2 W_2(K_1,K_2).
\end{equation}
Moreover, the $W_k$ are continuous with respect to the Hausdorff distance, in the sense that if two sequences of convex bodies $(K_1^n)_n$ and $(K_2^n)_n$ respectively converge to some convex bodies $K_1$ and $K_2$ both with respect to the Hausdorff distance, one has
$$\lim\limits_{n\rightarrow+\infty} W_k(K_1^n,K_2^n) = W_k(K_1,K_2).$$
\end{theorem}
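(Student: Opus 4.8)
The natural route goes through support functions, whose Minkowski-linearity $h_{t_1K_1+t_2K_2}=t_1h_{K_1}+t_2h_{K_2}$ (Remark \ref{rk:parametrizations}) is precisely what forces a quadratic dependence on $(t_1,t_2)$. I would first record, for a convex body $K$ with $h_K\in C^2$, the area formula
$$|K|=\frac12\int_0^{2\pi}h_K\big(h_K+h_K''\big)\,d\theta=\frac12\int_0^{2\pi}\big(h_K^2-(h_K')^2\big)\,d\theta,$$
the second equality following from an integration by parts using $2\pi$-periodicity. This is obtained by parametrizing $\partial K$ through the outer normal angle, $x(\theta)=h_K(\theta)\binom{\cos\theta}{\sin\theta}+h_K'(\theta)\binom{-\sin\theta}{\cos\theta}$, and inserting it into the shoelace integral $|K|=\tfrac12\oint(x\,dy-y\,dx)$; it is consistent with the perimeter formula $P(K)=\int_0^{2\pi}h_K\,d\theta$ of Remark \ref{rk:parametrizations}.

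Substituting $h=t_1h_{K_1}+t_2h_{K_2}$ into this formula and expanding the quadratic yields, for $C^2$ bodies,
$$|t_1K_1+t_2K_2|=t_1^2\,|K_1|+2t_1t_2\,W_1(K_1,K_2)+t_2^2\,|K_2|,$$
where the two cross terms have been merged by integration by parts into
$$W_1(K_1,K_2)=\frac12\int_0^{2\pi}\big(h_{K_1}h_{K_2}-h_{K_1}'h_{K_2}'\big)\,d\theta.$$
One then sets $W_0(K_1,K_2)=|K_1|$ and $W_2(K_1,K_2)=|K_2|$, which are exactly the diagonal values $W_0=W_1(K_1,K_1)$ and $W_2=W_1(K_2,K_2)$; so all three coefficients are evaluations of the single symmetric form $W_1$, and this is the sense in which they are ``bilinear''. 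The integral for $W_1$ is manifestly symmetric and bilinear in $(h_{K_1},h_{K_2})$, and since $h$ is Minkowski-linear, $W_1$ is Minkowski-bilinear in $(K_1,K_2)$.

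Two points then remain: regularity and continuity. To reach an arbitrary $K\in\K^2$ I would approximate by smooth bodies — e.g. the outer parallel sets $K\oplus\eps B_1$ or mollifications of $h_K$ — which converge for $d^H$; since volume and $\oplus$ are both continuous for the Hausdorff distance and the approximants' coefficients converge (see below), the displayed expansion passes to the limit. For the continuity statement I would \emph{not} try to pass the term $\int h_{K_1}'h_{K_2}'\,d\theta$ to the limit, as uniform convergence of support functions does not control their derivatives; instead I evaluate the expansion at $(t_1,t_2)\in\{(1,0),(0,1),(1,1)\}$ to get the closed forms
$$W_0=|K_1|,\qquad W_2=|K_2|,\qquad 2W_1(K_1,K_2)=|K_1\oplus K_2|-|K_1|-|K_2|.$$
Each right-hand side is a composition of Minkowski sum and area, both $d^H$-continuous, so $W_0,W_1,W_2$ are continuous; this is the stability claim, and it is also what makes the approximants' coefficients converge in the previous step.

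The main obstacle is the regularity gap: the support-function computation is transparent only for $C^2_+$ bodies, whereas polygons — central to this paper — have non-smooth support functions and curvature concentrated on Dirac masses. The real work is therefore to check that the quadratic identity is stable under Hausdorff approximation and to arrange the argument so that continuity of the coefficients never secretly requires convergence of derivatives; the three-point evaluation above is the device that achieves this, after which everything rests on the classical $d^H$-continuity of area and of the Minkowski sum.
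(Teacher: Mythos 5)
The paper offers no proof of Theorem \ref{th:brunn-minkowski} to compare against: it is recalled as a classical fact and delegated entirely to \cite{schneider}. Your argument is a correct, self-contained proof of the planar statement, and it follows the standard route: the area formula $|K|=\frac12\int_0^{2\pi}\big(h_K^2-(h_K')^2\big)\,d\theta$ for bodies with $C^2$ support function, expansion of the quadratic using Minkowski-linearity of the support function, and extension to arbitrary elements of $\K^2$ by density of smooth bodies in the Hausdorff metric. The genuinely valuable move is the polarization device $2W_1(K_1,K_2)=|K_1\oplus K_2|-|K_1|-|K_2|$: taking this closed form (rather than the integral $\frac12\int(h_{K_1}h_{K_2}-h_{K_1}'h_{K_2}')\,d\theta$) as the \emph{definition} of $W_1$ on non-smooth bodies is exactly what makes both the limiting argument and the continuity claim work without any control on derivatives of support functions, and it also gives the correct reading of the paper's loose phrase ``bilinear'': all three coefficients are evaluations of one symmetric Minkowski-bilinear form $V$, with $W_0=V(K_1,K_1)$ and $W_2=V(K_2,K_2)$. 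Two points deserve tightening. First, of your two proposed smoothing devices, the outer parallel sets $K\oplus\eps B_1$ do \emph{not} work: their support function is $h_K+\eps$, which is no smoother than $h_K$ (for a polygon it is not even $C^1$), so you must rely on mollification $h_K*\rho_\eps$, which does preserve the convexity inequality $(h''+h)*\rho_\eps\ge0$ and yields $C^\infty$ support functions; since $h''+h$ may still vanish somewhere, either add a small ball ($h_K*\rho_\eps+\eps$) or observe that the shoelace computation tolerates a stalling normal-angle parametrization. Second, Minkowski-bilinearity of $W_1$ is established by your integral formula only for smooth bodies; for general bodies it should be stated explicitly that the bilinearity identities pass to the limit by the same density-plus-continuity argument, since every term in them is $d^H$-continuous once $W_1$ is given by the polarization formula.
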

}

\subsection{Preliminary lemmas}

In this section we prove some important Lemmas that we use in Section \ref{s:proofs} for the proofs of the main results. \vspace{2mm}

The following lemma\footnote{We are indebted to Jimmy Lamboley for the idea of proof of this lemma.} shows that the equality of Theorem \ref{th:KLR_polygon} which is valid for Cheeger-regular polygons becomes an inequality for general polygons. Thus, we obtain an upper bound for the Cheeger constant of polygons that we use to prove the inequality \eqref{eq:uper_bound}. 
\begin{lemma}\label{lem:jimmy}
If $\Om$ is a { convex} polygon, one has
$$h(\Om)\leq \frac{P(\Om)+\sqrt{P(\Om)^2-4\big(T(\Om)-\pi\big)|\Om|}}{2|\Om|},$$
{ where $T(\Omega)$ is defined in \eqref{eq:def_T}. The equality occurs if and only if the polygon $\Om$ is Cheeger-regular (i.e., all its sides touch its Cheeger set $C_\Om$). }

\end{lemma}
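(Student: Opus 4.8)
The plan is to exploit the structure of the Cheeger set of a convex polygon given by Theorem~\ref{th:KLR_convex}, namely $C_\Om = \Om_{-t^*}\oplus t^*B_1$ with $h(\Om)=1/t^*$, and to compare the general polygon with the Cheeger-regular situation of Theorem~\ref{th:KLR_polygon}. The key observation I would build on is that for $t=t^*$ the inner parallel set $\Om_{-t^*}$ is itself a convex polygon whose sides are parallel to those of $\Om$ (some sides may have degenerated or disappeared), and whose angles are the \emph{same} as the corresponding angles of $\Om$. Consequently, writing $P_{-t}:=P(\Om_{-t})$ and $A_{-t}:=|\Om_{-t}|$, I expect the Steiner-type formula $|\Om_{-t}\oplus tB_1| = A_{-t} + tP_{-t} + \pi t^2$, together with $P(\Om_{-t}\oplus tB_1)=P_{-t}+2\pi t$, to hold. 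Since $C_\Om=\Om_{-t^*}\oplus t^*B_1$ and $h(\Om)=P(C_\Om)/|C_\Om|$, this gives an explicit relation among $A_{-t^*}$, $P_{-t^*}$ and $t^*$.

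Next I would relate the inner-parallel data back to $\Om$ itself. For a convex polygon, moving each side inward by $t$ produces $\Om_{-t}$, and the elementary bookkeeping of how the area and perimeter decrease is governed precisely by the quantity $T(\Om)=\sum 1/\tan(\alpha_i/2)$ appearing in Theorem~\ref{th:T_estimates}. Concretely, I expect $|\Om| = A_{-t} + tP_{-t} + t^2 T(\Om)$ and $P(\Om) = P_{-t} + 2tT(\Om)$ to hold \emph{as long as no side has disappeared}, i.e.\ as long as $\Om_{-t}$ retains all $N$ sides — this is exactly the Cheeger-regular case. In that case, eliminating $A_{-t^*}$ and $P_{-t^*}$ between the formula for $h(\Om)=1/t^*$ and these two identities yields a quadratic in $t^*$ (equivalently in $h$), whose solution is precisely \eqref{eq:cheeger_equality}; this recovers Theorem~\ref{th:KLR_polygon} and shows equality in the asserted bound.

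For a \emph{general} convex polygon the inner parallel set $\Om_{-t^*}$ may have \emph{fewer} than $N$ sides, because some sides recede and vanish before the critical radius $t^*$ is reached (these are exactly the sides that $C_\Om$ fails to touch). The effect on the bookkeeping is one-sided: the true area $|\Om|$ is then \emph{strictly larger} than $A_{-t^*}+t^*P_{-t^*}+(t^*)^2 T(\Om)$, or equivalently the contribution of the "missing" corners makes the analogue of the above quadratic an \emph{inequality} in the correct direction. Thus my strategy is to prove the inequality $|\Om|\ge A_{-t^*}+t^*P_{-t^*}+(t^*)^2 T(\Om)$ (with $P(\Om)\ge P_{-t^*}+2t^*T(\Om)$), substitute into $h(\Om)=1/t^*=P(C_\Om)/|C_\Om|$, and run the same quadratic elimination, now obtaining the stated upper bound rather than an equality. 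Since the quadratic function whose larger root gives $h$ is monotone in the relevant coefficients, overshooting $|\Om|$ translates into the desired upper bound for $h(\Om)$, and equality forces $\Om_{-t^*}$ to keep all its sides, i.e.\ forces $\Om$ to be Cheeger-regular.

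The main obstacle I anticipate is making rigorous the claim that passing from the $N$-gon to an inner parallel set with fewer sides changes the area/perimeter identities into inequalities of the \emph{correct sign}, and controlling the geometry when several sides disappear simultaneously or when $\Om_{-t^*}$ degenerates (to a segment or point). The cleanest route is probably to define $T$ and the Steiner coefficients for $\Om_{-t}$ as functions of $t$, differentiate in $t$ on the interval where the combinatorial type of $\Om_{-t}$ is constant, and track the jumps of $T(\Om_{-t})$ downward each time a side vanishes; since $T$ can only decrease as sides are lost, the accumulated defect has a definite sign. Handling the characterization of equality carefully — equality exactly when no side is ever lost before $t^*$, which is the definition of Cheeger-regularity — is where the argument must be watertight.
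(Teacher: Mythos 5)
Your overall skeleton --- inner parallel sets, Steiner formulas on the intervals where the combinatorial type of $\Om_{-t}$ is constant, and tracking what happens when sides vanish --- is the same as the paper's, but two things break. First, your sign for the jumps of $T$ is wrong. When a side of $\Om_{-t}$ disappears, the two adjacent angles $\alpha,\beta$ (necessarily with $\alpha+\beta>\pi$) merge into the single angle $\alpha+\beta-\pi$, and
$$\cotan\Bigl(\tfrac{\alpha+\beta-\pi}{2}\Bigr)-\cotan\tfrac{\alpha}{2}-\cotan\tfrac{\beta}{2}>0,$$
since this difference reduces to the positivity of $\cos\tfrac{\alpha}{2}\cos\tfrac{\beta}{2}$. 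So $T(\Om_{-t})$ jumps \emph{up}, not down, each time a side is lost; this is precisely the inequality $T(\Om_{-t_{k-1}})<T(\Om_{-t_k})$ that the paper imports from Kawohl--Lachand-Robert. Your bookkeeping principle ``$T$ can only decrease as sides are lost'' is therefore false and cannot anchor the argument.

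Second, and more fatally, even though the two inequalities you propose are in fact true (they follow from the \emph{upward} monotonicity of $t\mapsto T(\Om_{-t})$), they cannot be combined to prove the lemma. Set $f(t)=|\Om|-tP(\Om)+t^2\bigl(T(\Om)-\pi\bigr)$; since $t^*=1/h(\Om)$ is characterized by $|\Om_{-t^*}|=\pi (t^*)^2$ (Theorem \ref{th:KLR_convex}), the asserted bound on $h(\Om)$ is equivalent to $f(t^*)\le 0$. Your inequalities $|\Om|\ge A_{-t^*}+t^*P_{-t^*}+(t^*)^2T(\Om)$ and $P(\Om)\ge P_{-t^*}+2t^*T(\Om)$ both bound the inner perimeter $P_{-t^*}$ from \emph{above}, so no elimination of $P_{-t^*}$ between them is possible: chaining them yields only $f(t^*)\ge t^*\bigl(P_{-t^*}+2t^*T(\Om)-P(\Om)\bigr)$, a lower bound by a nonpositive quantity, i.e.\ nothing. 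Concretely, the values $t^*=1$, $T(\Om)=4$, $A_{-t^*}=\pi$, $P_{-t^*}=1$, $P(\Om)=9$, $|\Om|=\pi+100$ satisfy all three of your relations yet give $f(t^*)=95>0$, so no algebraic manipulation of your premises alone can close the proof. What is actually needed is the opposite-type estimate written in terms of the \emph{outer} data, $|\Om_{-t}|\ge |\Om|-tP(\Om)+t^2T(\Om)$ for all $t\in[0,r(\Om)]$, which the paper proves by induction over the intervals $[t_k,t_{k+1})$ using the upward jumps of $T$; then $g(t):=|\Om_{-t}|-\pi t^2\ge f(t)$, and since $t^*$ is the zero of $g$, it is at least the smallest zero of $f$, which is the lemma. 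The equality analysis (equality iff the zero of $g$ lies in $[0,t_1]$, i.e.\ iff $\Om$ is Cheeger-regular) rides on this comparison, so it too requires the corrected inequality rather than yours.
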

\begin{proof}

The number of sides of the inner sets $(\Om_{-t})_t$ of a polygon { (we refer to Definition \ref{def:inner} for the notion of inner sets)} is decreasing with respect to $t\ge 0$. Actually, the function $t\in[0,r(\Om)]\longmapsto n(t)$ (where $r(\Omega)$ is the inradius of $\Om$ and $n(t)$ is the number of sides of $\Om_{-t}$) is a piece-wise constant decreasing function. We introduce the sequence $0=t_0<t_1<\dots<t_{N_\Om}=r(\Om)$, where $N_\Om\in \mathbb{N}^*$, such that
$$\forall k\in\llbracket 0,N_\Om-1\rrbracket,\forall t\in [t_k,t_{k+1}),\ \ \ \ \ \ \ n(t) = n_k,$$
where $(n_k)_{k\in\llbracket 0,N_\Om-1\rrbracket}$ is strictly decreasing.\vspace{1mm}

{ In the computations below, we use the classical Steiner formulas (see \cite{steiner}) for the perimeter and the area of inner (polygonal) sets. We have for every $k\in\llbracket 1,N_\Om\rrbracket$,
\begin{equation}\label{eq:steiner_formula_1}
P(\Omega_{-t_{k}}) = P((\Omega_{-t_{k-1}})_{-(t_k-t_{k-1})})= P(\Om_{-t_{k-1}})-2(t_k-t_{k-1}) T(\Omega_{-t_{k-1}})   
\end{equation}
and 
\begin{equation}\label{eq:steiner_formula_2} |\Omega_{-t_{k}}|=|(\Omega_{-t_{k-1}})_{-(t_k-t_{k-1})}|=
|\Om_{-t_{k-1}}|-(t_k-t_{k-1}) P(\Om_{-t_{k-1}})+(t_k-t_{k-1})^2 T(\Om_{-t_{k-1}}).    
\end{equation}
} 

Let us take $t\in[0,r(\Om)]$ and $k\in \llbracket 1,N_\Om\rrbracket$. We have 
\begin{align*}
|\Om_{-t_k}|- (t-t_k)P(\Om_{-t_k})+(t-t_k)^2T(\Om_{-t_{k}})&= |\Om_{-t_{k-1}}|-(t_k-t_{k-1}) P(\Om_{-t_{k-1}})+(t_k-t_{k-1})^2T(\Om_{-t_{k-1}})\\
&- (t-t_k)\big( P(\Om_{-t_{k-1}})-2(t_k-t_{k-1})T(\Om_{-t_{k-1}})\big)+(t-t_k)^2 T(\Om_{-t_k})\\
&> |\Om_{-t_{k-1}}|-(t-t_{k-1})P(\Om_{-t_{k-1}})+(t-t_{k-1})^2T(\Om_{-t_{k-1}}), 
\end{align*}
where we used \eqref{eq:steiner_formula_1} and \eqref{eq:steiner_formula_2} for the equality and the fact that $T(\Om_{-t_{k-1}})<T(\Om_{-t_k})$ for the inequality (see the equation (14) of \cite{kawohl}). By straightforward induction, we show that one has for every $k\in \llbracket 1,N_\Om\rrbracket$,
\begin{equation}\label{eq:estimation_jimmy}
\forall t\in[0,r(\Om)],\ \ \ \ \ \ \ \  |\Om_{-t_k}|- (t-t_k)P(\Om_{-t_k})+(t-t_k)^2T(\Om_{-t_k})\ge |\Om|-tP(\Om)+t^2T(\Om).
\end{equation}

Now, let us take $k\in \llbracket 0,N_\Om-1\rrbracket$ and $t\in [t_k,t_{k+1})$. We have the inequality 
\begin{align*}
g(t):= |\Om_{-t}|-\pi t^2= |(\Om_{-t_k})_{-(t-t_k)}|-\pi t^2 &= |\Om_{-t_k}|- (t-t_k)P(\Om_{-t_k})+(t-t_k)^2T(\Om_{-t_k}) -\pi t^2\\
&\ge |\Om|-tP(\Om)+t^2T(\Om) -\pi t^2=:f(t),
\end{align*}
where the equality $g(t)=f(t)$ holds only on $[0,t_1]$. This { inequality yields}  that $1/h(\Om)$ { which is the  unique} zero of $g$ on $[0,r(\Om)]$ { (by Theorem \ref{th:KLR_convex})}, is larger than
$$\frac{2|\Om|}{P(\Om)+\sqrt{P(\Om)^2-4\big(T(\Om)-\pi\big)|\Om|}},$$
the smallest zero of $f$,\footnote{The second order polynomial function $f$ admits two positive zeros. Since, $f$ is continuous and $f(0) = |\Om|>0$ and $f(r(\Om))\leq g(r(\Om))=|\Om_{-r(\Om)}|-\pi r(\Om)^2 = -\pi r(\Om)^2<0$, we have by the Intermediate Value Theorem that the smallest zero of $f$ is in the interval $(0,r(\Om))$.} with equality if and only if the zero of $g$ is in $[0,t_1]$, which is the case if and only if the polygon $\Om$ is Cheeger-regular (see \cite[Theorem 3]{kawohl}). This ends the proof. 
\end{proof}
\begin{remark}
One interesting takeaway idea of proof of Lemma \ref{lem:jimmy} is that since the Cheeger constant of a planar convex set is { characterized via} the area of inner sets { (see Definition \ref{def:inner} and Theorem \ref{th:KLR_convex})}, one can use the estimates on the area of inner sets to obtain bounds on the Cheeger constant. This strategy is used in \cite{ftouhi_inequality} to prove the following sharp inequality 
$$\forall \Om \in \K^2,\ \ \ \ \ h(\Om)\leq \frac{1}{r(\Om)}+\sqrt{\frac{\pi}{|\Om|}},$$
where $r(\Om)$ is the inradius of $\Om$. 
\end{remark}

Since the inequality \eqref{eq:new_inequality} is quite easy to obtain for Cheeger-regular polygons (because in this case we have an explicit formula for the Cheeger constant in terms of the perimeter, the area and the interior angles), it is natural when dealing with general polygons to try to come back to the case of Cheeger-regular ones. The following Lemma shows how to deform a given polygon into a Cheeger-regular { one} while preserving its Cheeger constant, increasing its perimeter and decreasing its area. This allows (as shown in \textbf{Step 2} of Section \ref{ss:proof_of_new_inequality}) to prove the inequality \eqref{eq:new_inequality} for the case of general polygons. 

\begin{lemma}\label{lem:dorin}
Let $\Om$ be a polygon. There exists a Cheeger-regular polygon $\widetilde{\Om}$ such that: $|\Om|\ge |\widetilde{\Om}| $, $P(\Om) \leq  P\big(\widetilde{\Om}\big)$ and $h(\Om) = h\big(\widetilde{\Om}\big)$.
\end{lemma}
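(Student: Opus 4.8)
The plan is to reduce the problem to choosing a suitable \emph{inner} polygon, and then to exploit the strict gap in Lemma \ref{lem:jimmy} that is available precisely because $\Om$ fails to be Cheeger-regular. First I would fix notation via Theorem \ref{th:KLR_convex}: set $t^*:=1/h(\Om)$ and $K:=\Om_{-t^*}$, so that $|K|=\pi t^{*2}$. If $\Om$ is already Cheeger-regular there is nothing to do, so I assume it is not. The structural observation underlying everything is a description of the target class: a polygon is Cheeger-regular with Cheeger constant $1/t^*$ if and only if it is the outer parallel polygon of some convex polygon $\widetilde K$ with $|\widetilde K|=\pi t^{*2}$, i.e. the intersection of the half-planes bounded by the edge-lines of $\widetilde K$ each translated outward by $t^*$. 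Indeed such a $\widetilde\Om$ satisfies $\widetilde\Om_{-t^*}=\widetilde K$ with all edges preserved, so $h(\widetilde\Om)=1/t^*$ by Theorem \ref{th:KLR_convex} (uniqueness of the radius) and every side touches $C_{\widetilde\Om}=\widetilde K\oplus t^*B_1$. Applying the Steiner formulas \eqref{eq:steiner_formula_1}--\eqref{eq:steiner_formula_2} to the pair $(\widetilde\Om,\widetilde K)$, and using $T(\widetilde\Om)=T(\widetilde K)$ since the outer offset preserves the angles, I obtain the dictionary
\[
P(\widetilde\Om)=P(\widetilde K)+2t^*T(\widetilde K),\qquad |\widetilde\Om|=\pi t^{*2}+t^*P(\widetilde K)+t^{*2}T(\widetilde K).
\]

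With this dictionary the lemma becomes the elementary-looking problem of producing a convex polygon $\widetilde K$ with $|\widetilde K|=\pi t^{*2}$ such that $P(\widetilde K)+2t^*T(\widetilde K)\ge P(\Om)$ and $\pi t^{*2}+t^*P(\widetilde K)+t^{*2}T(\widetilde K)\le|\Om|$. The room needed to do this comes from the strict form of the comparison in Lemma \ref{lem:jimmy}: since $\Om$ is not Cheeger-regular, the zero $t^*$ of $g$ lies strictly beyond $t_1$, so $g(t^*)>f(t^*)$ there, which reads $|\Om|-t^*P(\Om)+t^{*2}T(\Om)<\pi t^{*2}$, equivalently $T(\Om)<\pi+\frac{P(\Om)}{t^*}-\frac{|\Om|}{t^{*2}}=:T_0$. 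In other words, the angle functional $T(\Om)$ is \emph{strictly} below the value $T_0$ that the two target constraints would force at the boundary case $P(\widetilde\Om)=P(\Om)$, $|\widetilde\Om|=|\Om|$. This is the conceptual core of the direction of the inequalities: decreasing the area while keeping the perimeter up at fixed Cheeger constant is possible only by making the shape genuinely \emph{spikier}, i.e. by building a $\widetilde K$ with $T(\widetilde K)$ strictly larger than $T(K)$, and the slack $T(\Om)<T_0$ is exactly what guarantees this is allowed.

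To produce $\widetilde K$ I would use a one-parameter family: starting from $K$, progressively cut its corners (or insert thin spikes), increasing $T(\widetilde K)$ continuously while rescaling at each stage to keep $|\widetilde K|=\pi t^{*2}$, and track the pair $\big(P(\widetilde\Om),|\widetilde\Om|\big)$ through the dictionary above. At the spiky end of the family $P(\widetilde\Om)$ can be made arbitrarily large, so the perimeter constraint is eventually met; the content is to reach the region $\{P(\widetilde\Om)\ge P(\Om)\}\cap\{|\widetilde\Om|\le|\Om|\}$, which is where the strict inequality $T(\Om)<T_0$ enters through an intermediate-value argument on the continuous functionals $P(\widetilde\Om)$ and $|\widetilde\Om|$ (continuity of the underlying Minkowski data being guaranteed by Theorem \ref{th:brunn-minkowski}). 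Once such a $\widetilde K$ is found, its outer offset $\widetilde\Om$ is Cheeger-regular with $h(\widetilde\Om)=h(\Om)$, $P(\widetilde\Om)\ge P(\Om)$ and $|\widetilde\Om|\le|\Om|$, as required.

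I expect the main obstacle to be precisely this last step: the perimeter and the area of $\widetilde\Om$ naturally move together under "round" deformations, so decoupling them — forcing $|\widetilde\Om|$ down while keeping $P(\widetilde\Om)$ up at fixed $t^*$ — is delicate, and the only mechanism available is the increase of $T$. The technical heart is therefore to design the family of inner polygons $\widetilde K$ so that the two monotonicities are realized simultaneously, and to verify, via the strict slack from non-Cheeger-regularity together with the bounds \eqref{eq:fragala} of Theorem \ref{th:T_estimates}, that the admissible region is actually reached rather than merely approached. All other ingredients — the outer-offset characterization, the Steiner identities, and the explicit formula of Theorem \ref{th:KLR_polygon} that makes the target class tractable — are routine by comparison.
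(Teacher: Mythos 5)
Your reduction is set up correctly: the outer-offset characterization of Cheeger-regular polygons, the Steiner dictionary $P(\widetilde\Om)=P(\widetilde K)+2t^*T(\widetilde K)$, $|\widetilde\Om|=\pi t^{*2}+t^*P(\widetilde K)+t^{*2}T(\widetilde K)$, and the slack $T(\Om)<T_0$ extracted from Lemma \ref{lem:jimmy} are all accurate. But the step you defer to the end --- showing that the admissible region is actually \emph{reached} --- is not a technicality that the slack can absorb: it is provably equivalent to the inequality \eqref{eq:new_inequality} for $\Om$, which in the paper is a \emph{consequence} of this very lemma (Step 2 of Section \ref{ss:proof_of_new_inequality}). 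To see this, normalize $t^*=1$ (so $h(\Om)=1$) and write $p=P(\Om)$, $a=|\Om|$, $P_K=P(\widetilde K)$, $T_K=T(\widetilde K)$, $|\widetilde K|=\pi$. Your two constraints read $P_K+2T_K\ge p$ and $P_K+T_K\le a-\pi$, and Theorem \ref{th:T_estimates} applied to $\widetilde K$ forces $P_K\ge 2\sqrt{\pi T_K}$. The first two give $T_K\ge \pi+p-a=T_0$, the last two give $\bigl(\sqrt{T_K}+\sqrt{\pi}\bigr)^2\le a$, i.e.\ $T_K\le\bigl(\sqrt{a}-\sqrt{\pi}\bigr)^2$. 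Hence your region is nonempty if and only if $T_0\le(\sqrt{a}-\sqrt{\pi})^2$, which unwinds to $2a-p\ge 2\sqrt{\pi a}$, i.e.\ exactly $h(\Om)\ge \frac{P(\Om)+\sqrt{4\pi|\Om|}}{2|\Om|}$. (The converse also holds: if the region is nonempty, applying Step 1 of Section \ref{ss:proof_of_new_inequality} to $\widetilde\Om$ and using $P(\widetilde\Om)\ge P(\Om)$, $|\widetilde\Om|\le|\Om|$ yields \eqref{eq:new_inequality} for $\Om$.) So completing your plan requires an independent proof of \eqref{eq:new_inequality} for a general, possibly non-Cheeger-regular polygon --- the very thing this lemma exists to provide. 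As written, the argument is circular.

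Moreover, the mechanism you propose cannot fill this hole: the slack $T(\Om)<T_0$ is an upper bound on $T(\Om)$, whereas feasibility requires an upper bound on $T_0$ itself (by $(\sqrt{a}-\sqrt{\pi})^2$), and the former gives no information about the latter; so no intermediate-value argument along a family of spiky inner polygons $\widetilde K$, however cleverly designed, can succeed using only the ingredients you list. The paper's proof sidesteps this entirely by never prescribing the target polygon's invariants in advance: it deforms $\Om$ itself by explicit moves that keep the Cheeger set (hence $h$) fixed --- parallel chord movements of a vertex whose two adjacent sides miss $C_\Om$ (area preserved, perimeter increased), followed by rotations of each remaining free side about its midpoint until it touches $C_\Om$ (area decreased, perimeter increased, checked by direct trigonometric estimates using $\alpha_1+\alpha_2\ge\pi$) --- and terminates after finitely many steps at a Cheeger-regular polygon. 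That construction is self-contained and is what legitimately feeds into the proof of \eqref{eq:new_inequality}.
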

\begin{proof}
If $\Om$ is Cheeger-regular, we take $\widetilde{\Om}=\Om$. { Let us assume that the polygon $\Om$ is not Cheeger-regular}. We provide an algorithm to deform $\Om$ into a Cheeger-regular polygon with the same Cheeger set (thus, also the same Cheeger value), larger perimeter and smaller area. 

Since the polygon $\Om$ is not Cheeger-regular, there exist three consecutive vertices, that we denote { by} $X$, $Y$ and $Z$, such that at least one (or may be both) of the sides $[XY]$ and $[YZ]$ does not touch the Cheeger set $C_\Om$. 

\vspace{2mm}
\underline{\textbf{First step: using parallel chord movements}}
\vspace{2mm}

We begin by the case where both the sides { $[XY]$ and $[YZ]$} do not touch $C_\Om$. We use a parallel chord movement. More precisely, we move $Y$ along the line passing through $Y$ and being parallel to the line $(XZ)$. This way, the area is { preserved}, and the perimeter must increase when moving $Y$ away from the perpendicular bisector of the side $[XZ]$ (which is possible at least in one direction). We assume without loss of generality that the direction which increases the perimeter is from $Z$ to $X$ (see Figure \ref{fig:chord}). We then move $Y$ until one the following cases occurs:
\begin{enumerate}
\item the line $(XY)$ becomes colinear to the other side of extremity $X$.
\item The side $[YZ]$ touches the boundary of $C_\Om$. 
\end{enumerate}

\begin{figure}[h]
    \centering
    \includegraphics[scale=1]{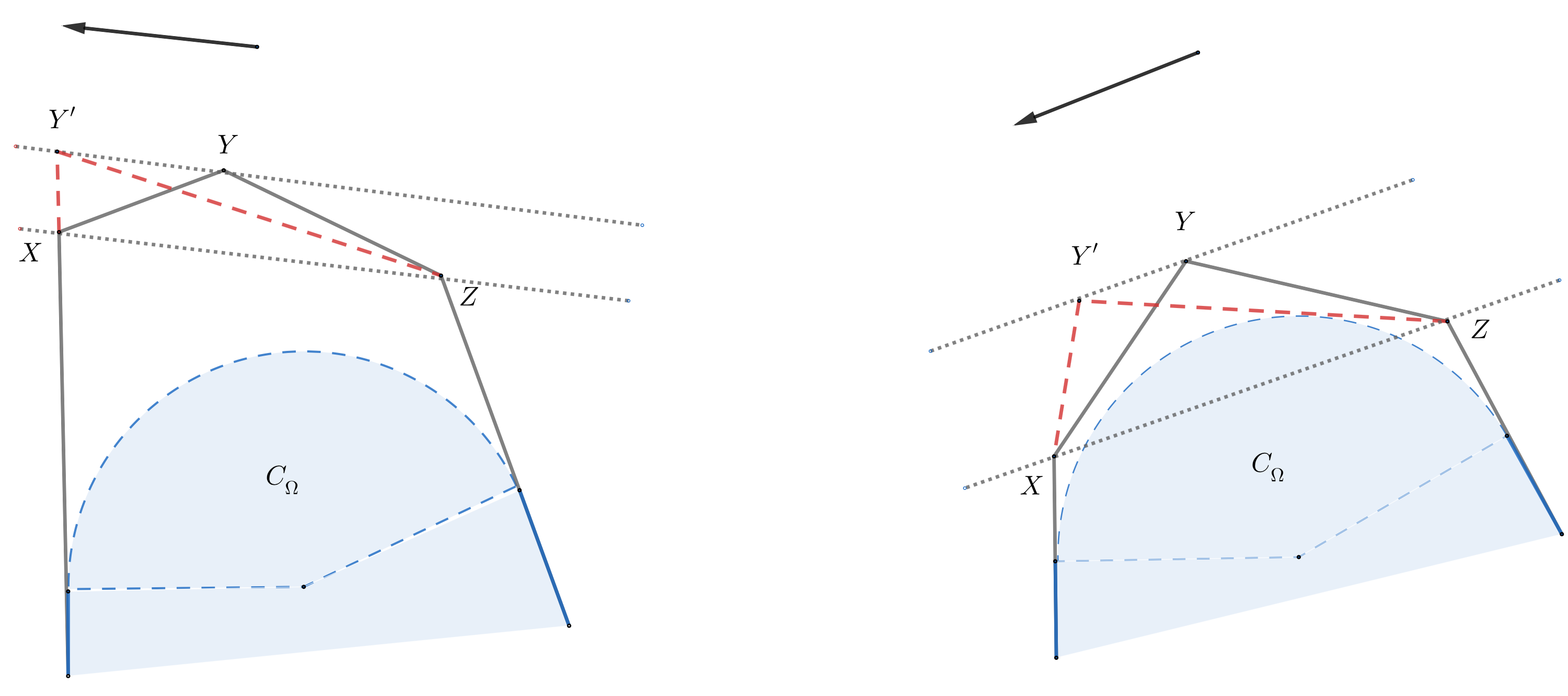}
    \caption{Case 1 on the left and case 2 on the right.}
    \label{fig:chord}
\end{figure}
In both cases, the number of sides that do not touch $\partial C_\Om$ { decreases} by one, while the area and the Cheeger constant are preserved and the perimeter is increased. 

We iterate { this} process for all the vertices which are extremities of two sides that do not touch $\partial C_\Om$. Since the number of vertices is finite, in a finite number of steps, we obtain a polygon where there are no consecutive sides that do not touch $\partial C_\Om$. 

\newpage 
\underline{\textbf{Second step: rotating the remaining sides}}\footnote{We are thankful to Dorin Bucur for suggesting such deformations.}
\vspace{2mm}

The second step is to "rotate" the remaining sides that do not touch $\partial C_\Om$ in such a way to make them touch it (see Figure \ref{fig:dorin}), in order to get a Cheeger-regular polygon with the same Cheeger constant, larger perimeter and smaller area. This kind of deformations {is inspired by the work of D. Bucur and I. Fragalà} \cite{bucur_fragala}. \vspace{2mm}

We denote by
\begin{itemize}
    \item $\alpha_1$,$\alpha_2\in (0,\pi)$ the interior angles of the polygon $\Om$ respectively associated to the vertices $X$ and $Y$,
    \item  $O$ the mid-point of the side $[XY]$,
    \item $t$ the angle of our "rotation",
    \item $X_t$ and  $Y_t$ the vertices of the { new polygon denoted by $\Om_t$, obtained by rotating the side $[XY]$ around the mid-point $O$ with the angle $t$.}
\end{itemize}
 { The polygon $\Omega_t$ has the same vertices as $\Om$ except $X$ and $Y$ that have respectively been mapped as $X_t$ and $Y_t$}, see Figure \ref{fig:dorin}.

\begin{figure}[h]
    \centering
    \includegraphics[scale=0.06]{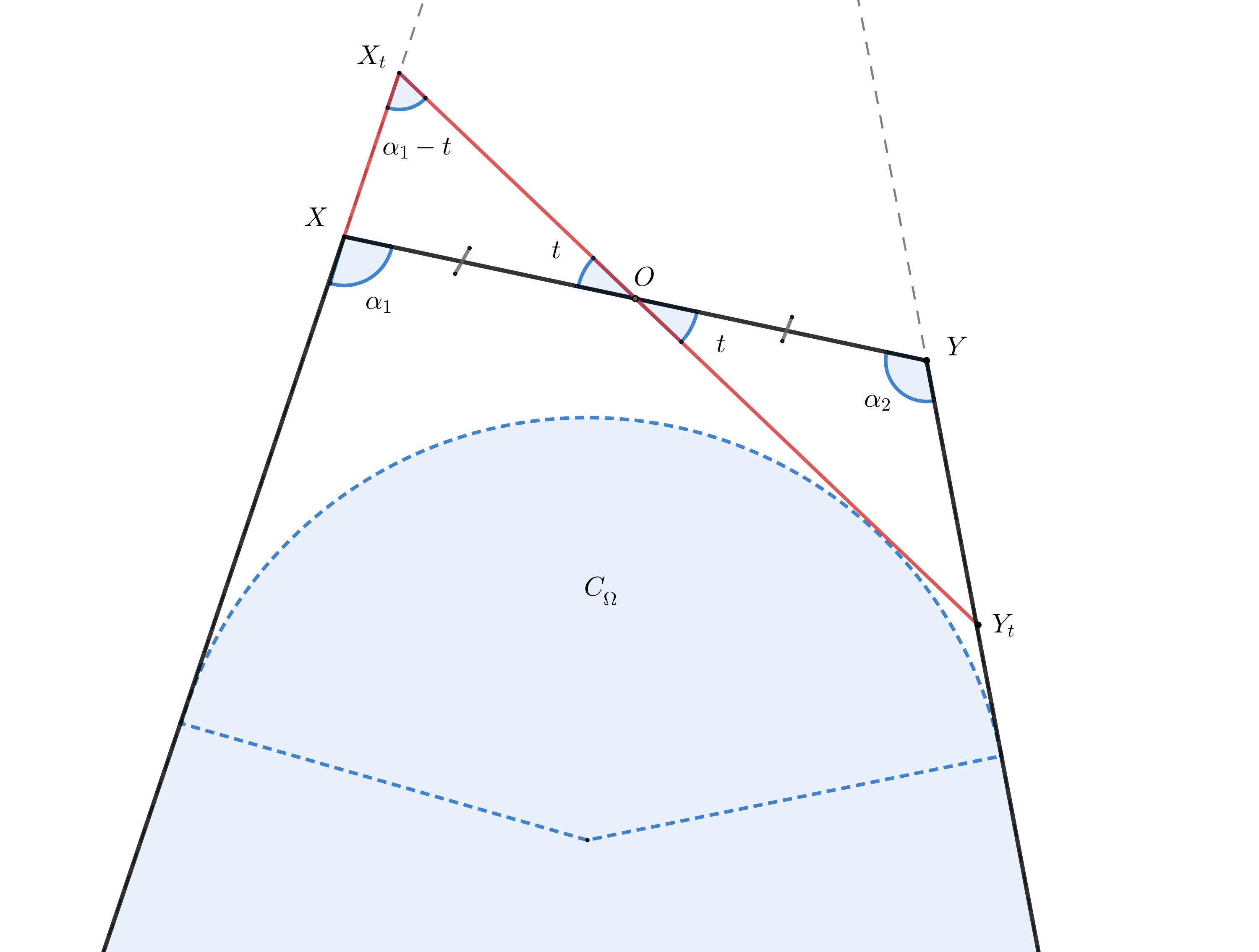}
    \caption{Rotation of the free side along its midpoint.}
    \label{fig:dorin}
\end{figure}

Without loss of generality, we assume that $\alpha_2\ge \alpha_1$ and we take $t \in (0,\pi-\alpha_2)$. It is trivial that $\Om$ and $\Om_t$ have the same Cheeger constant, moreover if the side $[XY]$ is not touching $\partial C_\Om$ then $\alpha_1+\alpha_2\ge \pi$ (see \cite[Section 5]{kawohl}). 

{ Let us now prove the inequalities $|\Omega_t|\leq |\Om|$ and $P(\Om_t)\ge P(\Om)$.} By using the sinus formula on the triangles $OXX_t$ and $OYY_t$, we have 
{ $$XX_t = a \frac{\sin t}{\sin(\alpha_1-t)}, \ \ \ \ \ \ \ \ OX_t = a \frac{\sin \alpha_1}{\sin(\alpha_1-t)}, \ \ \ \ \ \ \ \ YY_t = a \frac{\sin t}{\sin(\alpha_2+t)} \ \ \ \ \ \ \ \ \text{and} \ \ \ \ \ \ \ \ OY_t = a \frac{\sin \alpha_2}{\sin(\alpha_2+t)},$$}
where $a:=OX = XY/2$.
 \vspace{2mm}

Moreover, if we denote by $\S_{OXX_t}$ and $\S_{OYY_t}$ the areas of the triangles $OXX_t$ and $OYY_t$, we have
\begin{align*}
|\Om_t|- |\Om|&\ \ \ \ \ =\ \ \ \  \mathcal{S}_{OXX_t}-\mathcal{S}_{OYY_t}\\
&\ \ \ \ \ =\ \ \ \  \frac{1}{2}OX\cdot OX_t \sin t - \frac{1}{2}OY\cdot OY_t \sin t\\
&\ \ \ \ \ =\ \ \ \  \frac{a^2}{2}\left(\frac{\sin \alpha_1}{\sin(\alpha_1-t)}-\frac{\sin \alpha_2}{\sin(\alpha_2+t)}\right)\sin t\\
&\ \ \ \ \ =\ \ \ \  \frac{a^2\sin^2{(t)} \sin(\alpha_1+\alpha_2)}{2\sin(\alpha_1-t)\sin(\alpha_2+t)}\leq  0,
\end{align*}
{ because $\alpha_1+\alpha_2\in [\pi,2\pi]$, while $\alpha_1-t$ and $\alpha_2+t$ are in $(0,\pi]$.}

\vspace{2mm}

For the perimeters, we have
\begin{align*}
P(\Om_t)- P(\Om)&\ \ \ \ \ =\ \ \ \  XX_t+OX_t+OY_t-XY-YY_t\\
&\ \ \ \ \ =\ \ \ \  a\left(\frac{\sin t}{\sin(\alpha_1-t)}+\frac{\sin \alpha_1}{\sin(\alpha_1-t)}+ \frac{\sin \alpha_2}{\sin(\alpha_2+t)}-2-\frac{\sin t}{\sin(\alpha_2+t)}\right) \\
&\ \ \ \ \ =\ \ \ \  a\left(\frac{\sin t + \sin \alpha_1}{\sin(\alpha_1-t)}+\frac{\sin \alpha_2-\sin t}{\sin(\alpha_2+t)}-2\right) \\
&\ \ \ \ \ \ge\ \ \ \  a\left(\frac{\sin t + \sin \alpha_1}{\sin(\alpha_1-t)}+\frac{\sin \alpha_1-\sin t}{\sin(\alpha_1+t)}-2\right)\\
&\ \ \ \ \ =\ \ \ \  a\left(\frac{2\sin\left(\frac{\alpha_1+t}{2}\right)\cos\left(\frac{\alpha_1-t}{2}\right)}{2\sin\left(\frac{\alpha_1-t}{2}\right)\cos\left(\frac{\alpha_1-t}{2}\right)}+\frac{2\sin\left(\frac{\alpha_1-t}{2}\right)\cos\left(\frac{\alpha_1+t}{2}\right)}{2\sin\left(\frac{\alpha_1+t}{2}\right)\cos\left(\frac{\alpha_1+t}{2}\right)}-2\right) \\
&\ \ \ \ \ = \ \ \ \  a\left(\sqrt{\frac{\sin\left(\frac{\alpha_1+t}{2}\right)}{\sin\left(\frac{\alpha_1-t}{2}\right)}}-\sqrt{\frac{\sin\left(\frac{\alpha_1-t}{2}\right)}{\sin\left(\frac{\alpha_1+t}{2}\right)}}\  \right)^2\\
&\ \ \ \ \ \ge \ \ \ \ 0,
\end{align*}
where the inequality in the middle is a consequence of the assumption $\alpha_1\leq \alpha_2$ and the monotonicity of the function $g_t:x\longmapsto \frac{\sin{x}-\sin{t}}{\sin{(x+t)}}$ on $[\alpha_1,\alpha_2]$, where $t\in (0,\pi-\alpha_2)$. Indeed, we have for every $x\in [\alpha_1,\alpha_2]$
\begin{align*}
g'_t(x)&\ \ \ \ \ =\ \ \ \   \frac{\cos{x}\sin{(x+t)}-(\sin{x}-\sin{t})\cos{(x+t)}}{\sin^2(x+t)}\\
&\ \ \ \ \ =\ \ \ \  \frac{(\cos{x}\sin{(x+t)}-\sin{x}\cos{(x+t)})+\sin{t}\cos{(x+t)}}{\sin^2(x+t)}\\
&\ \ \ \ \ =\ \ \ \  \frac{\sin{t}+\sin{t}\cos{(x+t)}}{\sin^2(x+t)}\\
&\ \ \ \ \ =\ \ \ \ \frac{\sin{t}\cdot (1+\cos{(x+t)})}{\sin^2(x+t)}\\
&\ \ \ \ \ \ge\ \ \ \ 0.
\end{align*}

Iterating, in a finite number of steps, we get a Cheeger-regular polygon $\tilde{\Om}$ such that $|\Om|\ge |\widetilde{\Om}| $, $P(\Om) \leq  P\big(\widetilde{\Om}\big)$ and $h(\Om) = h\big(\widetilde{\Om}\big)$.

\end{proof}


In the following lemmas, we give some quantitative estimates for { the perimeter,} the Cheeger constant and the area via the Hausdorff distance and the radial functions (see Definition \ref{def:parametrizations}). These inequalities are used in the fourth step of Section \ref{sect:preuve_convex}. 

\begin{lemma}\label{lem:quantitative_p}
If $\Om_1$ and $\Om_2$ are two planar convex sets, we have 
\begin{equation}\label{eq:quantitative_p}
|P(\Om_1)-P(\Om_2)|\leq 2\pi d^H(\Om_1,\Om_2).    
\end{equation}
\end{lemma}
\begin{proof}
Let us respectively denote by $h_{\Om_1}$ and $h_{\Om_2}$ the support functions of $\Om_1$ and $\Om_2$, see Definition \ref{def:parametrizations}. We have 
$$|P(\Om_1)-P(\Om_2)| = \left|\int_0^{2\pi} h_{\Om_1}-\int_0^{2\pi} h_{\Om_2}\right|
\leq 2\pi \left\| h_{\Om_1}-h_{\Om_2}\right\|_{\infty}
= 2\pi d^H(\Om_1,\Om_2),$$
where $\|\cdot\|_\infty$ stands for the infinity norm over $[0,2\pi)$.
\end{proof}

\begin{lemma}\label{lem:quantitative}
Take $\Om_1$ and $\Om_2$ two planar domains starshaped with respect to the origin $0$, whose radial functions are denoted by $f_{\Om_1}$ and $f_{\Om_2}$ and such that $f_{\Om_1},f_{\Om_2}\ge r_0$, where $r_0>0$. We have
\begin{enumerate}
\item $|h(\Om_1)-h(\Om_2)|\leq \frac{2}{r_0^2} \|f_{\Om_1}-f_{\Om_2}\|_{\infty},$
\item $\big||\Om_1|-|\Om_2|\big|\leq 2\pi \max(\|f_{\Om_1}\|_\infty,\|f_{\Om_2}\|_\infty) \|f_{\Om_1}-f_{\Om_2}\|_\infty$,
\end{enumerate}
where $\|\cdot\|_\infty$ stands for the infinity norm over $[0,2\pi)$. 
\end{lemma}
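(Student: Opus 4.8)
The plan is to treat the two inequalities separately, the second being a direct computation and the first relying on the monotonicity and scaling behaviour of the Cheeger constant.

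For the area estimate (part 2), I would start from the polar-coordinate expression of the area of a domain starshaped with respect to the origin, namely $|\Om_i| = \frac{1}{2}\int_0^{2\pi} f_{\Om_i}(\theta)^2\, d\theta$. Writing the difference as a difference of squares,
\[
|\Om_1| - |\Om_2| = \frac{1}{2}\int_0^{2\pi}\bigl(f_{\Om_1}(\theta) - f_{\Om_2}(\theta)\bigr)\bigl(f_{\Om_1}(\theta) + f_{\Om_2}(\theta)\bigr)\, d\theta,
\]
I would pull $\|f_{\Om_1} - f_{\Om_2}\|_\infty$ out of the integral and bound the remaining factor by $\int_0^{2\pi}(f_{\Om_1} + f_{\Om_2})\,d\theta \le 4\pi \max(\|f_{\Om_1}\|_\infty, \|f_{\Om_2}\|_\infty)$. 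This yields the claimed constant $2\pi\max(\|f_{\Om_1}\|_\infty,\|f_{\Om_2}\|_\infty)$ at once, with no real obstacle.

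For the Cheeger estimate (part 1), the key is to convert the additive closeness $|f_{\Om_1} - f_{\Om_2}| \le \delta$ (with $\delta := \|f_{\Om_1}-f_{\Om_2}\|_\infty$) into a multiplicative inclusion, using the lower bound $r_0$. Since $f_{\Om_i} \ge r_0$, the disk $r_0 B_1$ is contained in each $\Om_i$, and the monotonicity of the Cheeger constant under inclusion together with $h(r_0 B_1) = 2/r_0$ (computable from Theorem \ref{th:KLR_convex}, the inner parallel disk balancing $\pi t^2$ at $t = r_0/2$) gives $h(\Om_i) \le 2/r_0$. Next, from $f_{\Om_2} \le f_{\Om_1} + \delta \le (1 + \delta/r_0) f_{\Om_1}$, valid because $r_0 \le f_{\Om_1}$, I obtain the inclusion $\Om_2 \subset (1 + \delta/r_0)\Om_1$. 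Applying again monotonicity and the scaling rule $h(t\Om) = h(\Om)/t$ gives $h(\Om_2) \ge h(\Om_1)/(1+\delta/r_0)$, whence
\[
h(\Om_1) - h(\Om_2) \le \frac{\delta}{r_0}\, h(\Om_2) \le \frac{\delta}{r_0}\cdot\frac{2}{r_0} = \frac{2}{r_0^2}\,\delta.
\]
Exchanging the roles of $\Om_1$ and $\Om_2$ produces the reverse bound, and together they give $|h(\Om_1)-h(\Om_2)| \le \frac{2}{r_0^2}\|f_{\Om_1}-f_{\Om_2}\|_\infty$.

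The only genuinely delicate point is part 1, and specifically the passage from the additive bound $|f_{\Om_1}-f_{\Om_2}|\le\delta$ to the multiplicative comparison $\Om_2\subset(1+\delta/r_0)\Om_1$: this is where the hypothesis $f_{\Om_i}\ge r_0>0$ is essential, since without a uniform lower bound on the radial functions the ratio $f_{\Om_2}/f_{\Om_1}$ could blow up and the argument would fail. Everything else — monotonicity of $h$ under inclusion, its scaling law, and the value $h(r_0B_1)=2/r_0$ — is standard, so I expect no further obstruction.
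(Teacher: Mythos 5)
Your proof is correct and takes essentially the same route as the paper: part 2 is the identical polar-coordinate difference-of-squares computation, and for part 1 the paper uses exactly your argument, converting the additive bound $\delta$ into the multiplicative inclusion $\Om_2 \subset \left(1+\frac{\delta}{r_0}\right)\Om_1$ and then combining monotonicity of $h$ under inclusion, the scaling law $h(t\Om)=h(\Om)/t$, and the bound $h(\Om_i)\le h(B_{r_0})=\frac{2}{r_0}$ coming from the inscribed disk of radius $r_0$. There are no gaps.
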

\begin{proof}

\begin{enumerate}
\item The proof of this assertion is inspired from the proof of \cite[Proposition 1]{cox}.

Let us take $d:= \|f_{\Om_1}-f_{\Om_2}\|_\infty$. We have for every $\theta\in [0,2\pi)$
$$\left(1+\frac{d}{r_0}\right)f_{\Om_1}(\theta) = f_{\Om_1}(\theta)+\frac{d}{r_0}f_{\Om_1}(\theta)\ge f_{\Om_1}(\theta) + d \ge f_{\Om_1}(\theta)+f_{\Om_2}(\theta)-f_{\Om_1}(\theta)=f_{\Om_2}(\theta).$$
Thus, $\Om_2\subset \left(1+\frac{d}{r_0}\right)\Om_1$, which implies the following 
$$h(\Om_1)\leq h\left(\frac{1}{1+d/r_0}\Om_2\right) = \left(1+\frac{d}{r_0}\right) h(\Om_2)\leq h(\Om_2) + \frac{d}{r_0} h(B_{r_0}) = h(\Om_2)+\frac{2d}{r_0^2},$$
where $B_{r_0}$ is the disk of radius $r_0$. 
By similar arguments we obtain
$$h(\Om_2)\leq h(\Om_1)+\frac{2d}{r_0^2},$$
which proves the announced inequality.
\item We have \begin{align*}
\big||\Om_1|- |\Om_2|\big|&= \frac{1}{2} \left|\int_0^{2\pi}(f_{\Om_1}^2(\theta)-f_{\Om_2}^2(\theta))d\theta\right| \\
&\leq  \frac{1}{2} \int_0^{2\pi} (|f_{\Om_1}(\theta)|+|f_{\Om_2}(\theta)|) |f_{\Om_1}(\theta)-f_{\Om_2}(\theta)|d\theta\\
&\leq  2\pi\max(\|f_{\Om_1}\|_\infty,\|f_{\Om_2}\|_\infty) \|f_{\Om_1}-f_{\Om_2}\|_\infty.
\end{align*}
\end{enumerate}

\end{proof}

\section{{ Proofs} of the main results}\label{s:proofs}
\subsection{Proof of the inequality \eqref{eq:new_inequality}}\label{ss:proof_of_new_inequality}
The proof is done in four steps: \vspace{2mm}

\underline{\textbf{Step 1: Cheeger-regular polygons}}\vspace{2mm}

Even-though the inequality was already known in this case, we briefly recall the proof for { the} sake of completeness. 

Since $\Om$ is a Cheeger-regular polygon, by Theorem \ref{th:KLR_polygon}, we { have} an explicit formula of its Cheeger constant. We then use the inequality $T(\Om)\leq \frac{P(\Om)^2}{4|\Om|}$ of  Theorem \ref{th:T_estimates} to conclude (see \eqref{eq:def_T} for the definition of $T(\Om)$).  We write 
$$h(\Om) = \frac{P(\Om)+\sqrt{P(\Om)^2-4\big(T(\Om)-\pi\big)|\Om|}}{2|\Om|}\ge \frac{P(\Om)+\sqrt{P(\Om)^2-4\left(\frac{P(\Om)^2}{4|\Om|}-\pi\right)|\Om|}}{2|\Om|}= \frac{P(\Om)+\sqrt{4\pi|\Om|}}{2|\Om|}.$$

\underline{\textbf{Step 2: General polygons}}\vspace{2mm}

By Lemma \ref{lem:dorin}, there exists $\widetilde{\Om}$ a Cheeger-regular polygon  such that { $|\Om|\ge |\widetilde{\Om}| $, $P(\Om) \leq P\big(\widetilde{\Om}\big)$} and $h(\Om) = h\big(\widetilde{\Om}\big)$. 

Then, we get
$$h(\Om) = h\big(\widetilde{\Om}\big) \ge  \frac{P\left(\widetilde{\Om}\right)+\sqrt{4\pi\left|\widetilde{\Om}\right|}}{2\left|\widetilde{\Om}\right|} = \frac{P\big(\widetilde{\Om}\big)}{2\left|\widetilde{\Om}\right|}+\frac{\pi}{\sqrt{\left|\widetilde{\Om}\right|}} \ge \frac{P(\Om)}{2|\Om|}+\frac{\pi}{\sqrt{|\Om|}} = \frac{P(\Om)+\sqrt{4\pi|\Om|}}{2|\Om|}.$$

\underline{\textbf{Step 3: General convex sets}}\vspace{2mm}

By { the} density of the polygons in $\K^2$ and the continuity of the area, the perimeter and the Cheeger constant { with respect to} the Hausdorff distance, we show that the inequality \eqref{eq:new_inequality} holds for general convex sets. { We refer to \cite[Proposition 3.1]{reverse_cheeger} for the continuity of the Cheeger constant with respect to the Hausdorff distance in the class of convex sets.}
\vspace{2mm}

\underline{\textbf{Step 4: Equality for sets that are homothetical to their form bodies}}\vspace{2mm}

If $\Om$ is homothetical to its form body (which is  the case for circumscribed polygons), we have by using \cite[Equality (7.168)]{schneider} and the equality $\frac{1}{2}r(\Om)P(\Om)=|\Om|$ { (see for example \cite{circum})} 
$$\forall t\in [0,r(\Om)],\ \ \ |\Om_{-t}|=\left(1-\frac{t}{r(\Om)}\right)^2|\Om|=|\Om|-P(\Om)t+\frac{P(\Om)^2}{4|\Om|}t^2.$$ 
{ The equation $|\Om_{-t}|=\pi t^2$ admits two different solutions $\frac{2|\Om|}{P(\Om)-\sqrt{4\pi|\Om|}}$ and $\frac{2|\Om|}{P(\Om)+\sqrt{4\pi|\Om|}}$. Thus, by Theorem \ref{th:KLR_convex} we have 
$$h(\Om)\in \left\{\frac{P(\Om)-\sqrt{4\pi|\Om|}}{2|\Om|}, \frac{P(\Om)+\sqrt{4\pi|\Om|}}{2|\Om|} \right\}.$$
At last, since $h(\Om)\ge \frac{P(\Om)+\sqrt{4\pi|\Om|}}{2|\Om|}>\frac{P(\Om)-\sqrt{4\pi|\Om|}}{2|\Om|}$ (by inequality \eqref{eq:new_inequality}), we deduce the equality }
\begin{equation}\label{eq:egal}
h(\Om)= \frac{P(\Om)+\sqrt{4\pi|\Om|}}{2|\Om|}.    
\end{equation}

\subsection{Proof of the second assertion of Theorem \ref{th:diagram_open} (convex sets)}\label{sect:preuve_convex}

The inequalities \eqref{eq:trivial} and  \eqref{eq:new_inequality} (stated in the introduction) imply that 
$$\D_{\mathcal{K}^2}\subset \left\{(x,y)\ \ |\ \ x\ge x_0\ \ \ \text{and}\ \ \  \frac{1}{2}x+\sqrt{\pi}\leq y \leq x \right\}.$$

It remains to prove the { opposite} inclusion. The proof follows the following steps:

\begin{enumerate}
\item  We provide a continuous family $(S_p)_{p\ge P(B)}$ of convex bodies which fill the upper boundary of the diagram.
\item We provide a continuous family $(L_p)_{p\ge P(B)}$ of convex bodies which fill the lower boundary of the diagram.
\item We use the latter domains to construct (via Minkowski sums) a family of continuous paths $(\Gamma_p)_{p\ge P(B)}$ which relate the upper domains to the lower ones. By continuously increasing the perimeter, we show that we are able to cover all the area between the upper and lower boundaries of the diagram $\D_{\K^2}$ (see Definition \ref{def:boundary} for the notion of upper and lower boundaries of the diagram $\D_{\K^2}$). 
\end{enumerate}\vspace{2mm}

\underline{\textbf{Step 1: The upper boundary of the diagram $\D_{\K^2}$:}}\vspace{2mm}

The upper boundary of $\D_{\K^2}$ (see Definition \ref{def:boundary}) is filled by domains that are Cheeger of themselves, which means that $C_\Om = \Om$. It is shown in \cite[Theorem 2]{kawohl} that the stadiums  (i.e., the convex hull of two identical disks) are Cheeger of themselves. We then use these sets to fill the upper boundary $\{(x,x)\ |\ x\ge P(B)\}$.

Let us consider the family of stadiums $(Q_t)_{t\ge 0}$ given by the convex hulls of the balls of unit radius centered in $O(0,0)$ and $O_t(0,t)$ rescaled so as $|Q_t|=1$. The function $t\in[0,+\infty) \longmapsto P(Q_t)=\frac{2(\pi+t)}{\sqrt{\pi+2t}} $ is continuous and strictly increasing to infinity. Thus, we have by the { Intermediate Value Theorem}

$$\big\{\big(P(Q_t),h(Q_t)\big)\ |\ t\ge 0\}=\big\{\big(P(Q_t),P(Q_t)\big)\ |\ t\ge 0\}=\{(x,x)\ |\ x\ge P(B)\}.$$


\vspace{3mm}
\underline{\textbf{Step 2: The lower boundary of the diagram $\D_{\K^2}$:}}\vspace{2mm}

Since the equality \eqref{eq:egal} holds for sets that are homothetical to their form bodies, we use such domains to fill the lower boundary. { Let us consider the family $(C_d)_{d\ge 2}$ of the so-called symmetrical cup-bodies, which are given by the convex hulls of the unit ball (centered in $O(0,0)$) and the points of coordinates $(-d/2,0)$ and $(d/2,0)$ rescaled so as $|C_d|=1$. By using formulas (7) and (8) of \cite{cifre_maria}, we have for every $d\ge 2$ 
$$P(C_d) = 2 \sqrt{\sqrt{d^2-4}+2\arcsin{\left(\frac{2}{d}\right)}}.$$
The function $d\in [2,+\infty)\longmapsto P(C_d) = 2 \sqrt{\sqrt{d^2-4}+2\arcsin{\left(\frac{2}{d}\right)}}$ is continuous and strictly increasing to infinity. Thus, we have by the Intermediate Value Theorem
$$\big\{\big(P(C_d),h(C_d)\big)\ |\ d\ge 2\big\}=\big\{\big(P(C_d),P(C_d)/2+\sqrt{\pi}\big)\ |\ d\ge 2\}=\{(x,x/2+\sqrt{\pi})\ |\ x\ge P(B)\}.$$}


\vspace{3mm}
\underline{\textbf{Step 3: Continuous paths:}}\vspace{2mm}

Since the functions $t\in[0,+\infty) \longmapsto P(Q_t)=\frac{2(\pi+t)}{\sqrt{\pi+2t}} $ and $d\in [2,+\infty)\longmapsto P(C_d) = 2\sqrt{\sqrt{d^2-4}+2\arcsin{\frac{2}{d}}}$ are continuous and strictly increasing, we have 
$$\forall p\ge P(B),\exists ! (t_p,d_p)\in [0,+\infty)\times [2,+\infty),\ \ \  P(Q_{t_p})=P(C_{d_p})=p.$$
From now on we take $S_p:=Q_{t_p}$ and $L_p:=C_{d_p}$. 
\vspace{2mm}

For every $p\ge P(B)$, we introduce the closed and continuous path $\Gamma_{p}:[0,3)\longrightarrow \R^2$, defined as follows:
$$\begin{array}{ccccc}
 t & \longmapsto & \begin{cases}
    \big(P(K_p^{t}),h(K_p^{t})\big), &\qquad\mbox{if $t\in [0,1],$}\vspace{1mm} \\
    \big((t-1)P(B)+(2-t)p,(t-1)P(B)+(2-t)p\big),& \qquad\mbox{if $t\in (1,2],$} \vspace{1mm} \\
     \big((3-t)P(B)+(t-2)p,\ (3-t)P(B)+(t-2)(p/2+\sqrt{\pi}))\big),& \qquad\mbox{if $t\in (2,3),$}
\end{cases}\\
\end{array}$$
{ with $$K_p^t := \frac{t S_p\oplus(1-t)L_p}{\sqrt{|t S_p\oplus(1-t)L_p|}}\in \K^2,$$
where $t S_p\oplus(1-t)L_p$ is the Minkowski sum of the sets $t S_p$ and $(1-t) L_p$ (see Definition \ref{def:minkowski_sums}).}

\begin{center}
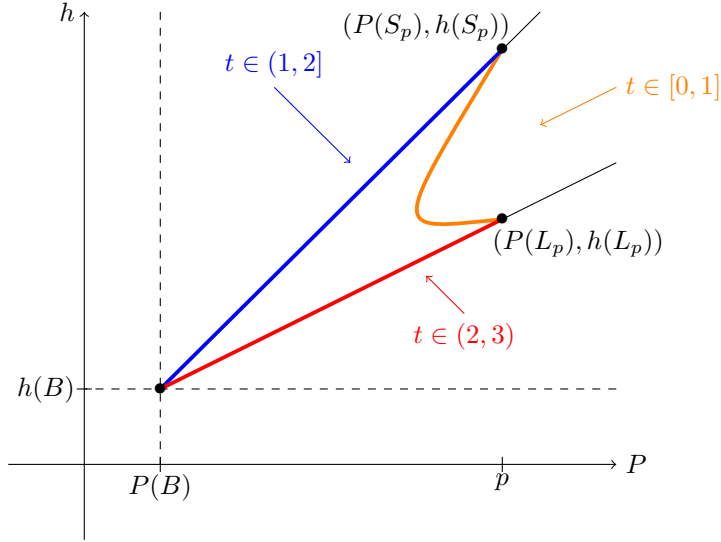

\begin{tikzpicture}
\draw[->] (-1,0) -- (7,0);
\draw (7,0) node[right] {$P$};
\draw [->] (0,-1) -- (0,6);
\draw (0,6) node[left] {$h$};
\draw [dashed] (1,6) -- (1,0) node[below] {$P(B)$};
\draw [dashed] (7,1) -- (0,1) node[left] {$h(B)$};
\draw [domain=1:7] plot(\x,{.5*\x+.5});
\draw [domain=1:6] plot(\x,{\x});
\draw (5.5,0) node[below] {$p$};
\draw (5.5,-0.1) -- (5.5,0.1);
\draw (1,-0.1) -- (1,0.1);
\draw (-0.1,1) -- (0.1,1);
\draw [line width = .5mm, orange]  (5.5,3.25) .. controls (4,3.1) and (4,3)   .. (5.5,5.5);
\draw [line width = .5mm, blue] (1,1) -- (5.5,5.5);
\draw [line width = .5mm, red] (1,1) -- (5.5,3.25);
\draw (5.5,5.5) node {$\bullet$};
\draw (4.5,5.5) node[above] {$(P(S_p),h(S_p))$};
\draw (5.5,3.25) node {$\bullet$};
\draw (6.5,3.25) node[below] {$(P(L_p),h(L_p))$};
\draw (1,1) node {$\bullet$};
\draw[->,blue] (2.5,5) -- (3.5,4);
\draw [blue] (2.5,5) node[above] {$t\in(1,2]$};
\draw[->,red] (5,2) -- (4.5,2.5);
\draw [red] (5,2) node[below] {$t\in(2,3)$};
\draw[->,orange] (7,5) -- (6,4.5);
\draw [orange] (7,5) node[right] {$t\in[0,1]$};
\end{tikzpicture}
\captionof{figure}{The continuous and closed path $\Gamma_p$}
\label{fig:path}
\end{center}

{ For every $t\in [0,1]$, the set $K^t_p$ is convex with unit area. Moreover, by the continuity of the perimeter, the area and the Cheeger constant with respect to the Hausdorff distance, we have that the set $\{(P(K^t_p),h(K^t_p))\ |\ t\in [0,1]\}$ is a continuous curve in $\R^2$. Thus, we conclude that the path $\Gamma_p$ is a closed and continuous curve in $\R^2$. }


Since the diameters of $L_p$ and $S_p$ are colinear, we can use the following result of \textbf{Step 3} of the proof of { \cite[Theorem 3.14]{ftouh}}:
\begin{equation}\label{eq:estimations_peri}
\forall t\in [0,1],\ \ \ \ \ \frac{p}{2} \leq P(K_p^t).
\end{equation}

\vspace{2mm}

\underline{\textbf{Step 4: Stability of the paths:}}\vspace{2mm}

Now, let us prove a continuity result on the paths $\big(\Gamma_p\big)_{p\ge P(B)}$. We take $p_0\ge P(B)$ and $\eps>0$, and show that 
\begin{equation}\label{eq:convergence}
\exists\ \alpha_\eps>0, \forall p\in (p_0-\alpha_\eps,p_0+\alpha_\eps)\cap [P(B),+\infty), \ \ \ \ \ \ \ \ \ \sup_{t\in [0,3]}\|\ \Gamma_p(t)-\Gamma_{p_0}(t)\ \|\leq \eps.
\end{equation}
Let $p\in [P(B),p_0+1]$, with straightforward computations, we have that for every $t\in[1,3)$,
$$\|\Gamma_p(t)-\Gamma_{p_0}(t)\|\leq 2|p-p_0|\underset{p\rightarrow p_0}{\longrightarrow} 0.$$
The remaining case ($t\in [0,1]$) requires more computations. For every $t\in[0,1]$, we have
$$\|\Gamma_p(t)-\Gamma_{p_0}(t)\|\leq |P(K^t_p)-P(K^t_{p_0})|+|h(K^t_p)-h(K^t_{p_0})|\leq \underbrace{\left(2\pi +\frac{(p_0+1)^6}{2}\right)}_{C_{p_0}>0}d^H(K^t_p,K^t_{p_0}).$$

Indeed, we used:
\begin{itemize}
\item Lemma \ref{lem:quantitative_p} for the term with the perimeters 
$$|P(K^t_p)-P(K^t_{p_0})| \leq  2\pi d^H(K^t_p,K^t_{p_0}),$$

\item and the first assertion of Lemma \ref{lem:quantitative} for the term with the Cheeger constants, with the sets $K^t_p$ and $K^t_{p_0}$ that we assume to contain the origin $0$ and whose radial functions (see Definition \ref{def:parametrizations}) are denoted by $f_{p,t},f_{p_0,t}$.
\begin{align*}
|h(K^t_p)-h(K^t_{p_0})| &\leq  \frac{2}{\min\big(r(K^t_p),r(K^t_{p_0})\big)^2} \cdot \|f_{p,t}-f_{p_0,t}\|_\infty\ \ \ \ \text{(by Lemma \ref{lem:quantitative})}   \\
& \leq \frac{2}{\min\big(r(K^t_p),r(K^t_{p_0})\big)^2}\cdot \frac{\|f_{p,t}\|_\infty \|f_{p_0,t}\|_\infty}{\min\big(r(K^t_p),r(K^t_{p_0})\big)^2}\cdot   d^H(K^t_p,K^t_{p_0}) \ \ \ \ \text{(by \cite[Proposition 2]{boulkhemair})} \\
& \leq \frac{(p_0+1)^6}{2} d^H(K^t_p,K^t_{p_0}) \ \ \ \text{(we used $\|f_\Om\|_\infty\leq d(\Om)\leq \frac{P(\Om)}{2}$ and $r(\Om)> \frac{|\Om|}{P(\Om)}$, see \cite[Lemma B.1]{brasco}).}
\end{align*}
\end{itemize}

{ It remains to prove that $d^H(K_{p}^t,K_{p_0}^t)$ converges (uniformly in $t$) to $0$ when $p$ goes to $p_0$. Before detailing the computations, let us recall that if $\Om$ is a convex body, we denote by $h_\Om$ its support function defined in Definition \ref{def:parametrizations} (we refer the reader to Remark \ref{rk:parametrizations} for some interesting properties of support functions). We also note that if $\Om_1, \Om_2\in \K^2$ such that $|\Om_1|=|\Om_2|=1$, one has
$$\forall t\in [0,1],\ \ \ \ |(1-t)\Om_1\oplus t\Om_2|\ge ((1-t)|\Om_1|^{1/2}+t|\Om_2|^{1/2})^2=1,$$
where we used the classical Brunn--Minkowski inequality (see \cite[Theorem 7.1.1]{schneider} for example). We are now in position to conclude.} 

\begin{eqnarray*}
d^H(K_{p}^t,K_{p_0}^t) &=&\left \|h_{K_{p}^t}-h_{K_{p_0}^t}\right \|_{\infty}\ \ \ \text{(by \cite[Lemma 1.8.14]{schneider})}\\
 &=& \left \|\frac{(1-t)h_{L_{p_0}}+ t h_{S_{p_0}}}{\sqrt{|(1-t){L_{p_0}}\oplus t {S_{p_0}}|}}-\frac{(1-t)h_{L_p}+t h_{S_p}}{\sqrt{|(1-t){L_p}\oplus t {S_p}|}}\right \|_{\infty}\\
 &\leq &(1-t) \left \|\frac{h_{L_{p_0}}}{\sqrt{|(1-t){L_{p_0}}\oplus t {S_{p_0}}|}}-\frac{h_{L_p}}{\sqrt{|(1-t){L_p}\oplus t {S_p}|}}  \right \|_{\infty}\\
&&+t \left \|\frac{h_{S_{p_0}}}{\sqrt{|(1-t){L_{p_0}}\oplus t {S_{p_0}}}|}-\frac{h_{S_p}}{\sqrt{|(1-t){L_p}\oplus t {S_p}|}}  \right \|_{\infty}\\
&\leq &\frac{1}{\sqrt{|(1-t){L_p}\oplus t {S_p}|}}\left(\left \|h_{S_{p_0}}-h_{S_p}\right \|_{\infty}+\left \|h_{L_{p_0}}-h_{L_p}\right \|_{\infty}\right)\\
&& + \left( \left \|h_{S_{p_0}}\right \|_{\infty}+\left \|h_{L_{p_0}}\right \|_{\infty}\right)\left |\frac{1}{\sqrt{|(1-t){L_p}\oplus t {S_p}|}}-\frac{1}{\sqrt{|(1-t){L_{p_0}}\oplus t {S_{p_0}}|}} \right |\\
&\leq & d^H(S_{p_0},S_p) +d^H(L_{p_0},L_p) +\left( \left \|h_{S_{p_0}}\right \|_{\infty}+\left \|h_{L_{p_0}}\right \|_{\infty}\right)\Big |\ |(1-t)L_p\oplus t S_p|-|(1-t)L_{p_0}\oplus t S_{p_0}|\ \Big| \\
&\leq & d^H(S_{p_0},S_p) +d^H(L_{p_0},L_p)+ \left( \left \|h_{S_{p_0}}\right \|_{\infty}+\left \|h_{L_{p_0}}\right \|_{\infty}\right) \sum_{k=0}^{2}|W_k(L_p,S_p)-W_k(L_{p_0},S_{p_0})| \underset{p\rightarrow p_0}{\longrightarrow} 0,
\end{eqnarray*}
{ where $W_0$, $W_1$ and $W_2$ stand for the Minkowski mixed volumes introduced in Theorem \ref{th:brunn-minkowski}.} \vspace{2mm}

Finally, we deduce that $\lim\limits_{p\rightarrow p_0} \sup\limits_{t\in [0,3]}\|\ \Gamma_p(t)-\Gamma_{p_0}(t)\|=0$, which proves \eqref{eq:convergence}.

\vspace{3mm}
\underline{\textbf{Step 5: Conclusion:}}\vspace{2mm}

{ Now that we proved that the boundaries $\{(x,x)\ \ |\ \ x\ge P(B)\}$ and $\{(x,x/2+\sqrt{\pi})\ \ |\ \ x\ge P(B)\}$ are included in the diagram $\D_{\K^2}$, it remains to show that it is also the case for the set of points contained between them. Let  $A(x_A,y_A)\in \left\{(x,y)\ \ |\ \ x> x_0\ \ \ \text{and}\ \ \  x/2+\sqrt{\pi}< y < x \right\}$. Step 4 shows that for any choice of $p_0$ and $p_1$ in $[P(B),+\infty)$, the curves $\Gamma_{p_0}$ and $\Gamma_{p_1}$ are homotopic, and the homotopy is $H:(\sigma,t)\in [0,1]\times [0,3)\longmapsto \Gamma_{(1-\sigma)p_0+\sigma p_1}$. In particular, let us chose $p_0=P(B)$ and $p_1=4 x_A$. The index of $A$ with respect to $\Gamma_{P(B)}=\{(P(B),P(B))\}$ is equal to $0$. Meanwhile, by the inequality \eqref{eq:estimations_peri} of Step 3, we deduce that $A$ is in the interior of the curve $\Gamma_{4 x_A}$, which means that its index with respect to $\Gamma_{4 x_A}$ is non-zero. Thus, it must follow that there exists $(\Bar{\sigma},\Bar{t})\in [0,1]\times [0,3)$ such that $A=H(\Bar{\sigma},\Bar{t})\in \D_{\K^2}$.
}

Finally, we get the equality 
$$\D_{\mathcal{K}^2}=\left\{(x,y)\ \ |\ \ x\ge x_0\ \ \ \text{and}\ \ \  \frac{1}{2}x+\sqrt{\pi}\leq y \leq x \right\}.$$

\subsection{Proof of the first assertion of Theorem  \ref{th:diagram_open} (simply connected sets)}
By the inequalities \eqref{eq:trivial} and \eqref{eq:faber-krahn} where the latter one is an equality if and only if $\Om$ is a ball,  we have 
$$\D_{\mathcal{S}^2}\subset \{(x_0,x_0)\}\cup\left\{(x,y)\ \ |\ \ x> x_0\ \ \ \text{and}\ \ \ x_0<y\leq x\right\}.$$

We have $(x_0,x_0) = \big(P(B),h(B)\big)\in \D_{\mathcal{S}^2}$. Take $(p,\ell)\in \left\{(x,y)\ \ |\ \ x>x_0\ \ \ \text{and}\ \ \ x_0<y\leq x\right\}$, let us prove that there exists a simply connected domain $\Om\subset \R^2$ of unit area such that $P(\Om)=p$ and $h(\Om)=\ell$. \vspace{1mm}

If $\ell \ge p/2+\sqrt{\pi}$, then by the second assertion of Theorem \ref{th:diagram_open} there exists a convex (thus simply connected) domain satisfying the latter properties. Now, let us assume that $\ell < p/2+\sqrt{\pi}$. We take { $L_{2(\ell-\sqrt{\pi})}$ as in the proof of the second assertion of Theorem \ref{th:diagram_open} (see Step 3 of Section \ref{sect:preuve_convex}) to be a symmetrical 2-cup body (which is the convex hull of a disk and two points outside it that are symmetric with respect to its center) such that $P(L_{2(\ell-\sqrt{\pi})})=2(\ell-\sqrt{\pi})<p$, $|L_{2(\ell-\sqrt{\pi})}|=1$ and $h(L_{2(\ell-\sqrt{\pi})})=\ell$ (where we used the equality \eqref{eq:egal} since $L_{2(\ell-\sqrt{\pi})}$ is homothetical to its form body). Since, the involved functionals are invariant with respect to translations and rotations, we may assume without loss of generality that $L_{2(\ell-\sqrt{\pi})}$ is symmetric with respect to the x-axis, included  $ \{x\leq 0\}$ and its boundary touches the y-axis in the origin $0$ which is assumed to be a singular point, see Figure \ref{fig:tailed}. Let $\eps>0$ sufficiently small such that the set $C_\ell$ (the Cheeger set of $L_{2(\ell-\sqrt{\pi})}$) is included in the half-plane $\{x<-\eps\}$. We denote by $A(-\eps,c_\eps)$ and $B(-\eps,-c_\eps)$, where $c_\eps>0$, the points of the intersection between the line $\{x=-\eps\}$ and the boundary of $L_{2(\ell-\sqrt{\pi})}$ (see Figure \ref{fig:tailed}). For $t\ge 0$, we introduce the points $A_t\left(-\eps,\frac{\eps c_\eps}{\eps+t}\right)$, $B_t\left(-\eps,-\frac{\eps c_\eps}{\eps+t}\right)$ and $O_t\left(t,0\right)$. We then define for every $t\ge 0$, 
$$L^t:=(L_{2(\ell-\sqrt{\pi})}\cap \{x\leq-\eps\})\cup \mathcal{T}_t,$$
where $\mathcal{T}_t$ is the (closed) triangle of vertices $O_t$, $A_t$ and $B_t$. The function $t\ge 0\longmapsto P(L^t)$ continuously varies from $2(\ell-\sqrt{\pi})$ to $+\infty$. Thus, by the Intermediate Value Theorem, there exists $t_p$ such that $P(L^{t_p})=p$. Moreover, the set $L^{t_p}$ is simply connected with unit area and has the same Cheeger set as $L_{2(\ell-\sqrt{\pi})}$, which yields that $h(L^{t_p})=h(L_{2(\ell-\sqrt{\pi})})=\ell$. This shows that $(p,\ell)\in \D_{\mathcal{S}^2}$. }


Finally, we obtain the equality
$$\D_{\mathcal{S}^2}=\{(x_0,x_0)\}\cup \left\{(x,y)\ \ |\ \ x> x_0\ \ \ \text{and}\ \ \ x_0<y\leq x\right\}.$$

\begin{figure}[h]
    \centering
    \includegraphics[scale=0.7]{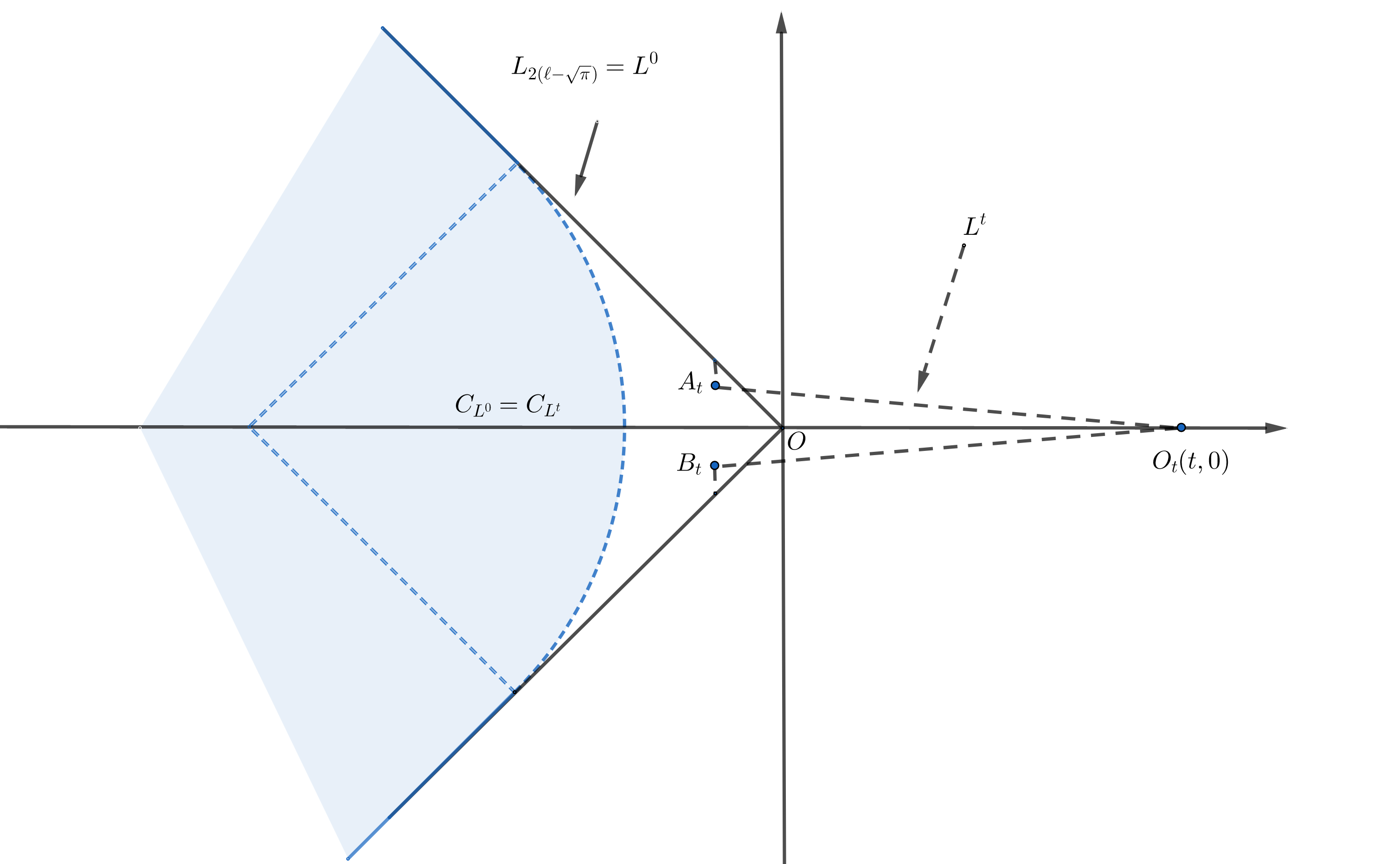}
    \caption{Tailed domain $L^t$ with the same area, the same Cheeger set and higher perimeter.}
    \label{fig:tailed}
\end{figure}

\subsection{Proof of the inequality \eqref{eq:uper_bound}}
This is a direct application of Lemma \ref{lem:jimmy} and the inequality $T(\Om)\ge N \tan \frac{\pi}{N}$ (see Theorem \ref{th:T_estimates} and \eqref{eq:def_T} for the definition of $T(\Om)$). Indeed, for any $\Om\in\p_N$, one has
$$h(\Om) \leq  \frac{P(\Om)+\sqrt{P(\Om)^2-4\big(T(\Om)-\pi\big)|\Om|}}{2|\Om|}\leq \frac{P(\Om)+\sqrt{P(\Om)^2+4\big(\pi-N\tan{\frac{\pi}{N}}\big) |\Om|}}{2|\Om|}.$$
The first inequality is an equality if and only if $\Om$ is Cheeger-regular and the second one is an equality if and only if $T(\Om)= N \tan \frac{\pi}{N}$, which is equivalent to $\alpha_1=\dots=\alpha_N= \frac{N-2}{N}\pi$. 

\subsection{Proof of Theorem \ref{th:diagram_polygon}}

\subsubsection{If $N=3$}
We have by \eqref{eq:triangles_cheeger}, 
$$\forall \Om\in \p_3,\ \ \ \ \ \ \   \sqrt{|\Om|}h(\Om)= \frac{P(\Om)}{2\sqrt{|\Om|}}+\sqrt{\pi}.$$
Thus, we have the inclusion
$$\D_3\subset\left\{\left(x,\frac{x}{2}+\sqrt{\pi}\right)\Big|\ \ x\ge P(R_3)\right\}.$$
The { opposite} inclusion is proved by considering for example the family $(T_d)_{d\ge 1}$ of isosceles triangles of vertices $X_d\left(0,\frac{\sqrt{3}}{2}\right)$, $Y_d\left(\frac{d}{2},0\right)$ and $Z_d\left(-\frac{d}{2},0\right)$. We have for every $d\ge 1$,

 $$\left\{\begin{matrix}
 P(R_3)=x_1\leq x_d := \frac{P(T_d)}{\sqrt{|T_d|}} = \frac{d+\sqrt{d^2+3}}{\frac{3^{1/4}}{2}\sqrt{d}}\underset{d\rightarrow +\infty}{\longrightarrow} +\infty
\\ 
\\
    h(R_3)=y_1\leq y_d := \frac{P(T_d)}{2\sqrt{|T_d|}}+\sqrt{\pi}=\frac{d+\sqrt{d^2+3}}{3^{1/4}\sqrt{d}}+\sqrt{\pi}\underset{d\rightarrow +\infty}{\longrightarrow} +\infty,
\\
\end{matrix}\right.$$
where the inequalities $x_1\leq x_d$ and $y_1\leq y_d$ are consequences of the isoperimetric inequality for triangles.

\subsubsection{If $N$ is even}\label{ss:even}
We have by the inequalities \eqref{eq:new_inequality} and \eqref{eq:uper_bound}
$$\mathcal{D}_N \subset \left\{(x,y)\ |\ x\ge P(R_N)\ \ \text{and}\ \ \frac{x}{2}+\sqrt{\pi}\leq y\leq f_N(x)\right\},$$
where $f_N:x\in [P(R_N),+\infty)\longmapsto \frac{x+\sqrt{x^2+4(\pi-N\tan{\frac{\pi}{N}})}}{2}$. 

It remains to prove the { opposite} inclusion. We provide explicit families of elements of $\p_N$ that respectively fill the upper and lower boundaries of $\D_N$ (see Definition \ref{def:boundary_polygons}) and then use those domains to construct continuous paths that fill the diagram.

\vspace{2mm}
\underline{\textbf{Step 1: The upper boundary of $\D_{N}$:}}\vspace{2mm}

We recall that the inequality \eqref{eq:uper_bound} is an equality if and only if $\Om$ is Cheeger-regular and all its angles are equal to $(N-2)\pi/N$. { We assume without loss of generality that two parallel sides of the regular $N$-gon $R_N$ are colinear to the $x$-axis (note that this is possible because the number of sides $N$ is even). We then consider the family of $N$-gons $(\widetilde{U}_t)_{t\ge 1}$ such that for every  $t\ge 1$,
$$\widetilde{U}_t := \left\{\left(t x,y\right)\ |\ (x,y)\in R_N\right\}.$$
Since the map $t\ge 1\longmapsto d(\widetilde{U}_t)\ge d(R_N)$ is continuous and strictly increasing, it is a bijection. Thus, for every $\delta \ge d(R_N)$, there exists a unique $t_\delta\ge 1$ such that $d(\widetilde{U}_{t_\delta})= \delta$. From now on we denote $U_\delta := \widetilde{U}_{t_\delta}$ for every $\delta \ge d(R_N)$. 

Since 
\begin{itemize}
    \item $(U_\delta)_\del$ is a family of $N$-gons that vary continuously with respect to the Hausdorff distance,
    \item the perimeter and the area are continuous with respect to the Hausdorff distance,
    \item $P(U_{d(R_N)})= P(R_N)$ and 
    \begin{equation}\label{eq:estimate}
\frac{P(U_\del)}{|U_\del|^{1/2}}\ge \frac{P(U_{\del})}{\del^{1/2}  d(R_N)^{1/2}} \ge \frac{2\del}{\del^{1/2}  d(R_N)^{1/2}}= \frac{2}{d(R_N)^{1/2}} \delta^{1/2}\underset{\del \rightarrow +\infty}{\longrightarrow} +\infty,
\end{equation}
\end{itemize}
we have by the  Intermediate Value Theorem:}  $$\forall\ p \ge P(R_N),\exists\ \del_p\ge d(R_N), \ \ \ \ \ \ \frac{P(U_{\del_p})}{|U_{\del_p}|^{1/2}} = p.$$

Moreover, the sets $(U_\delta)$ are Cheeger-regular and all their interior angles are equal to $(N-2)\pi/N$. Thus, they all realize the equality
$$|U_\del|^{1/2}h(U_\del) = \frac{P(U_\del)+\sqrt{P(U_\del)^2+4\big(\pi-N\tan{\frac{\pi}{N}}\big) |U_\del|}}{2|U_\del|^{1/2}} = f_N\left(\frac{P(U_\delta)}{\sqrt{|U_\delta|}}\right).$$
We then deduce that the upper boundary of $\D_N$ is given by the set of points $\big\{\big(x,f_N(x)\big)\ \ |\ \ x\ge P(R_N)\big\}$.

\vspace{2mm}
\underline{\textbf{Step 2: The lower boundary of $\D_N$:}}\vspace{2mm}



As for the upper boundary's case, we construct a continuous family of $N$-gons $(V_\del)_{\del \ge d(R_N)}$, such that $V_{d(R_N)}=R_N$ and  $d(V_\del)=\del$ for every $\del\ge d(R_N)$. We assume that a diameter of $R_N$ is given by $[OA]$, where $O=(0,0)$ and $A=(d(R_N),0)$ and denote by $B_N$ its incircle (see Figure \ref{fig:lower_domains}) and $M_1,\dots,M_N$ its vertices.

Take $\del\ge d(R_N)$, we denote by $A_\del = (\del,0)$ and $(\Delta_\del),(\Delta_\del')$ the lines passing through $A_\del$ which are tangent to $B_N$. The line $(\Delta_\del)$ (resp. $(\Delta'_\del)$) cuts the boundary of $R_N$ in at least two points: we denote by $M_{k_\del}^\del$ (resp. $M_{N-k_\del+1}^\del$) the farthest one from $A_\del$ (see Figure \ref{fig:lower_domains}), where $k_\del\in \llbracket 1,N/2\rrbracket$ such that { $2k_\del$} is the number of vertices of $R_N$ that are in the region given by the convex cone delimited by $(\Delta_\del)$ and $(\Delta'_\del)$. We then define $V_\del$ as the (convex) polygon whose vertices are given by
{
$$
\left\{\begin{matrix}
M_1^\delta=M_1=O,\vspace{2mm}
\\ 
M_i^\del = M_i,\ \ \ \text{for all $k\in \llbracket 2,k_\del-1 \rrbracket$}\vspace{2mm}
\\ 
M_{k_\del}^\del = \dots = M^\delta_{\frac{N}{2}}\vspace{2mm}
\\
M_{\frac{N}{2}+1}^\del = A_\del\vspace{2mm}
\\
M_{\frac{N}{2}+2}^\del = \dots = M^\delta_{N+2-k_\del}\vspace{2mm}
\\
M_i^\delta = M_i \ \ \ \text{for all $k\in \llbracket N+2-k_\del,N-1 \rrbracket$}
\end{matrix}\right.
$$
}

\begin{figure}[h]
    \centering
    \includegraphics[scale=0.6]{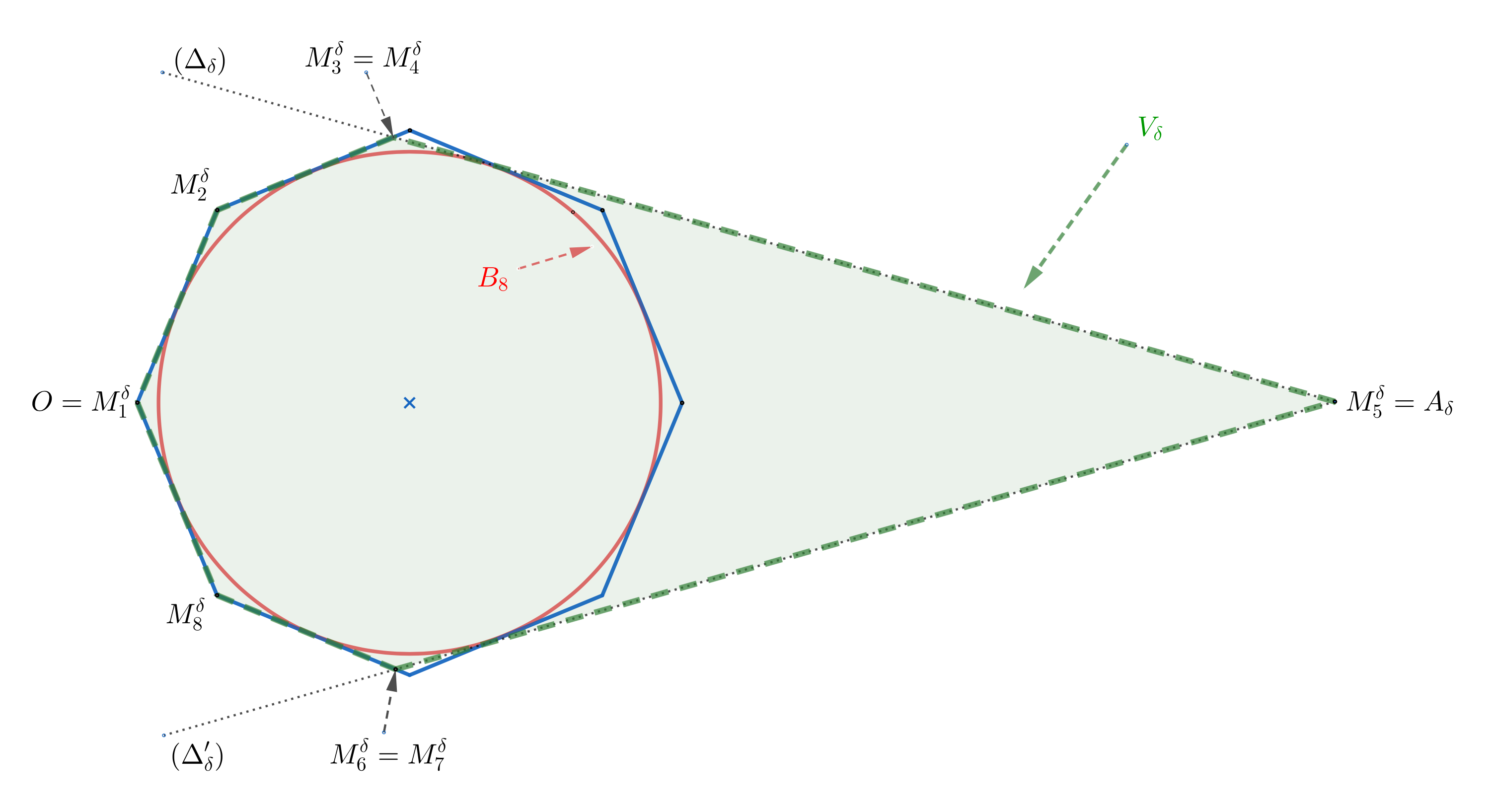}
    \caption{Construction of the circumscribed polygons $V_\del$ for $N=8$.}
    \label{fig:lower_domains}
\end{figure}

 Note that $V_\del$ has at most $N$ sides and that it is a circumscribed polygon. This yields that {the couple $(\frac{P(V_\del)}{|V_\del|^{1/2}},|V_\del|^{1/2}h(V_\del))$ lies} on the lower boundary of the diagram $\D_N$. We also, note that the applications $\del\in [d(R_N),+\infty)\longmapsto M_k^\del\in \R^2$ are continuous and thus { the family of polygons $(V_\del)_\del$ is continuous with respect to the Hausdorff distance}. Then, by { estimates similar to} \eqref{eq:estimate}, we get that $\lim\limits_{\del \rightarrow +\infty}\frac{P(V_\del)}{|V_\del|^{1/2}}= +\infty$. Thus, the lower boundary of $\D_N$ is given by the set of points $\big\{\big(x,x/2+\sqrt{\pi}\big)\ \ |\ \ x\ge P(R_N)\big\}$.
 
\vspace{2mm}

\underline{\textbf{Step 3: Continuous paths:}}\vspace{2mm}

Now that we have two families $(U_\del)$ and $(V_\del)$ of extremal shapes, it remains to define continuous paths that connect the upper domains to the lower ones and fill the whole diagram. Unfortunately, unlike for the case of the class $\K^2$, one cannot use Minkowski sums as they increase the number of sides and thus could give polygons that are not in the class $\p_N$, we will then construct the paths by continuously mapping the lower and upper polygons vertices. 

We assume without loss of generality that as for $V_\delta$ the diameter of $U_\del$ is given by $OA_\del$. We denote by  $O = L_1^\del, L_2^\del,\dots,L_{N/2-1}^\del=A_\del, L_{N/2}^\del,\dots,L_N^\del $ the vertices of  $U_\del$. For $t \in [0,1]$, we define $\Om_t^\delta$ as the polygon of vertices $((1-t)M_k^\delta+t L_k^\delta)_{ k\in [\![1,N]\!]}$. The polygon $\Om_t^\delta$ is convex and included in the rectangle $(0,\delta)\times\big(-\frac{d(R_N)}{2},\frac{d(R_N)}{2}\big)$. Thus, we have the following inequality :
\begin{equation}\label{eq:ok}
\forall t\in[0,1],\ \ \ \ \ \frac{P(\Om_t^\del)}{|\Om_t^\del|^{1/2}}\ge \frac{2\delta}{\delta^{1/2} d(R_N)^{1/2}}=\frac{2}{d(R_N)^{1/2}} \del^{1/2}.
\end{equation}

For every $\del\ge d(R_N)$, we introduce the closed and continuous path $\gamma_{\del}:[0,3)\longrightarrow \R^2$, defined as follows:  
$$\begin{array}{ccccc}
 t & \longmapsto & \begin{cases}
    \left(\frac{P(\Om^\del_{t})}{|\Om^\del_{t}|^{1/2}},|\Om^\del_{t}|^{1/2}h(\Om^\del_{t})\right), &\qquad\mbox{if $t\in [0,1],$}\vspace{1mm} \\
    \left((t-1)P(R_N)+(2-t)\frac{P(U_\del)}{|U_\del|^{1/2}}\ ,\ f_N\left((t-1)P(R_N)+(2-t)\frac{P(U_\del)}{|U_\del|^{1/2}}\right)\right),& \qquad\mbox{if $t\in (1,2],$}
 \\
    \left((3-t)P(R_N)+(t-2)\frac{P(V_\del)}{|V_\del|^{1/2}}\ ,\ (3-t)(\frac{P(R_N)}{2}+\sqrt{\pi})+(t-2)(\frac{P(V_\del)}{2|V_\del|^{1/2}}+\sqrt{\pi})\right),& \qquad\mbox{if $t\in (2,3).$} \vspace{1mm}
\end{cases}\\
\end{array}$$


\vspace{2mm}
\underline{\textbf{Step 4: Stability of the paths:}}\vspace{2mm}

Take $\del_0 \ge d(R_N)$ and $\eps > 0$, let us show that 
\begin{equation}\label{eq:convergencep}
\exists\ \alpha_\eps>0, \forall \del\in (\del_0-\alpha_\eps,\del_0+\alpha_\eps)\cap [P(R_N),+\infty), \ \ \ \ \ \ \ \ \ \sup_{t\in [0,3]}\|\ \gamma_\del(t)-\gamma_{\del_0}(t)\ \|\leq \eps.
\end{equation}
Let us take $\del\in [d(R_N),\del_0+1]$, with straightforward computations, there exists a constant $C(\del_0)>0$ depending only on $\del_0$ such that for every $t\in[1,3]$,
$$\|\gamma_\del(t)-\gamma_{\del_0}(t)\|\leq C(\del_0)\min\left(\left|\frac{P(U_\del)}{|U_\del|^{1/2}}-\frac{P(U_{\del_0})}{|U_{\del_0}|^{1/2}}\right|,\left|\frac{P(V_\del)}{|V_\del|^{1/2}}-\frac{P(V_{\del_0})}{|V_{\del_0}|^{1/2}}\right|\right)\underset{\del\rightarrow \del_0}{\longrightarrow} 0.$$
Moreover, by the quantitative estimates given in { Lemma \ref{lem:quantitative}}, there exist constants $C'({\delta_0}),C''(\delta_0)>0$, depending only on $\delta_0$, such that for all $\del\in [d(R_N),\delta_0+1]$ and all  $t\in [0,1]$ 
\begin{align*}
\|\gamma_\del(t)-\gamma_{\del_0}(t)\|&\leq \left|\frac{P(\Om^\del_t)}{|\Om^\del_t|^{1/2}}-\frac{P(\Om^{\del_0}_t)}{|\Om^{\del_0}_t|^{1/2}}\right|+\big| |\Om^\del_t|^{1/2}h(\Om^\del_t)-|\Om^{\del_0}_t|^{1/2}h(\Om^{\del_0}_t)\big|\\
& \leq C'(\delta_0) \left( |P(\Om_t^\del)-P(\Om_t^{\del_0})|+\left||\Om_t^\del|-|\Om_t^{\del_0}|\right|+|h(\Om_t^\del)-h(\Om_t^{\del_0})| \right) \\
&\leq C''(\delta_0) \max_{i \in \llbracket 1,N  \rrbracket} \|(1-t)M_i^\del+t L_i^\del-(1-t)M_i^{\del_0}-t L_i^{\del_0}\|\\
&\leq C''(\delta_0) \max_{i \in \llbracket 1,N  \rrbracket} (\|M_i^\del-M_i^{\del_0}\|+\|L_i^\del-L_i^{\del_0}\|) \underset{\del\rightarrow \del_0}{\longrightarrow} 0.
\end{align*}
Finally, we deduce that $\lim\limits_{\del\rightarrow \del_0} \sup\limits_{t\in [0,3]}\|\ \gamma_\del(t)-\gamma_{\del_0}(t)\|=0$, which proves \eqref{eq:convergencep}.

\vspace{2mm}
\underline{\textbf{Step 5: Conclusion:}}\vspace{2mm}

{ As for the case of convex sets (see Section \ref{sect:preuve_convex}), now that we proved that the boundaries $\{\big(x,f_N(x)\big)\ \ |\ \ x\ge P(R_N)\}$ and $\{(x,x/2+\sqrt{\pi})\ \ |\ \ x\ge P(R_N)\}$ are included in the diagram $\D_N$,  it remains to show that it is also the case for the set of points contained between them. Let  $A(x_A,y_A)\in \left\{(x,y)\ \ |\ \ x> P(R_N)\ \ \ \text{and}\ \ \  x/2+\sqrt{\pi}< y < f_N(x) \right\}$. Step 4 shows that for any choice of $\del$ and $\del'$ in $[d(R_N),\del_0+1]$, the curves $\gamma_{\del}$ and $\gamma_{\del'}$ are homotopic, and the homotopy is $H:(\sigma,t)\in [0,1]\times [0,3)\longmapsto \gamma_{(1-\sigma)\del+\sigma \del'}$. In particular, let us chose $\del=d(R_N)$ and $\del'=x_A^2d(R_N)$. The index of $A$ with respect to $\gamma_{\del}=\{(P(R_N),P(R_N)/2+\sqrt{\pi})\}$ is equal to $0$. Meanwhile, by the inequality \eqref{eq:ok}, we have that $A$ is in the interior of the curve $\gamma_{\del'}$, which means that its index with respect to $\gamma_{\del'}$ is non-zero. Thus, it must follow that there exists $(\Bar{\sigma},\Bar{t})\in [0,1]\times [0,3)$ such that $A=H(\Bar{\sigma},\Bar{t})\in \D_{N}$.
}




Finally, we get the equality 
$$\D_{N}=\left\{(x,y)\ \ |\ \ x\ge P(R_N)\ \ \ \text{and}\ \ \  \frac{1}{2}x+\sqrt{\pi}\leq y \leq \frac{x+\sqrt{x^2+4(\pi-N\tan{\frac{\pi}{N}})}}{2}\right\}.$$

\subsubsection{If $N\ge 5$ is odd}

By the inequalities \eqref{eq:new_inequality} and \eqref{eq:uper_bound}, we have
$$\D_N\subset \left\{(x,y)\ |\ x\ge P(R_N)\ \text{and}\  x/2+\sqrt{\pi}\leq y \leq f_N(x)\right\},$$
where 
\begin{equation}\label{eq:f_N}
f_N:x\longmapsto \frac{x+\sqrt{x^2+4(\pi-N\tan{\frac{\pi}{N}})}}{2}.    
\end{equation}

Let us study the lower and upper boundaries of the diagram $\D_N$.\vspace{2mm}

\underline{\textbf{The lower boundary of the diagram $\D_N$:}}\vspace{2mm}

Since $N-1$ is even, we have by Section \ref{ss:even} 
$$\{(x,x/2+\sqrt{\pi})\ |\ x\ge P(R_{N-1})\}\subset \D_{N-1}\subset \D_{N}.$$
It remains to prove that $$\{(x,x/2+\sqrt{\pi})\ |\ x\in[ P(R(N),P(R_{N-1})]\}\subset  \D_{N}.$$
To do so, we continuously move two consecutive sides of the polygon $R_N$ so as to align them while keeping the polygon circumscribed. This gives us a continuous ({ with respect to} the Hausdorff distance) family $(W_t)_{t\in[0,1]}$ of convex { circumscribed}  polygons such that $W_0:=R_N$ and $W_1$ is an element of $\p_{N-1}$,  see Figure \ref{fig:even}. 

\begin{figure}[h]
    \centering
    \includegraphics[scale=0.5]{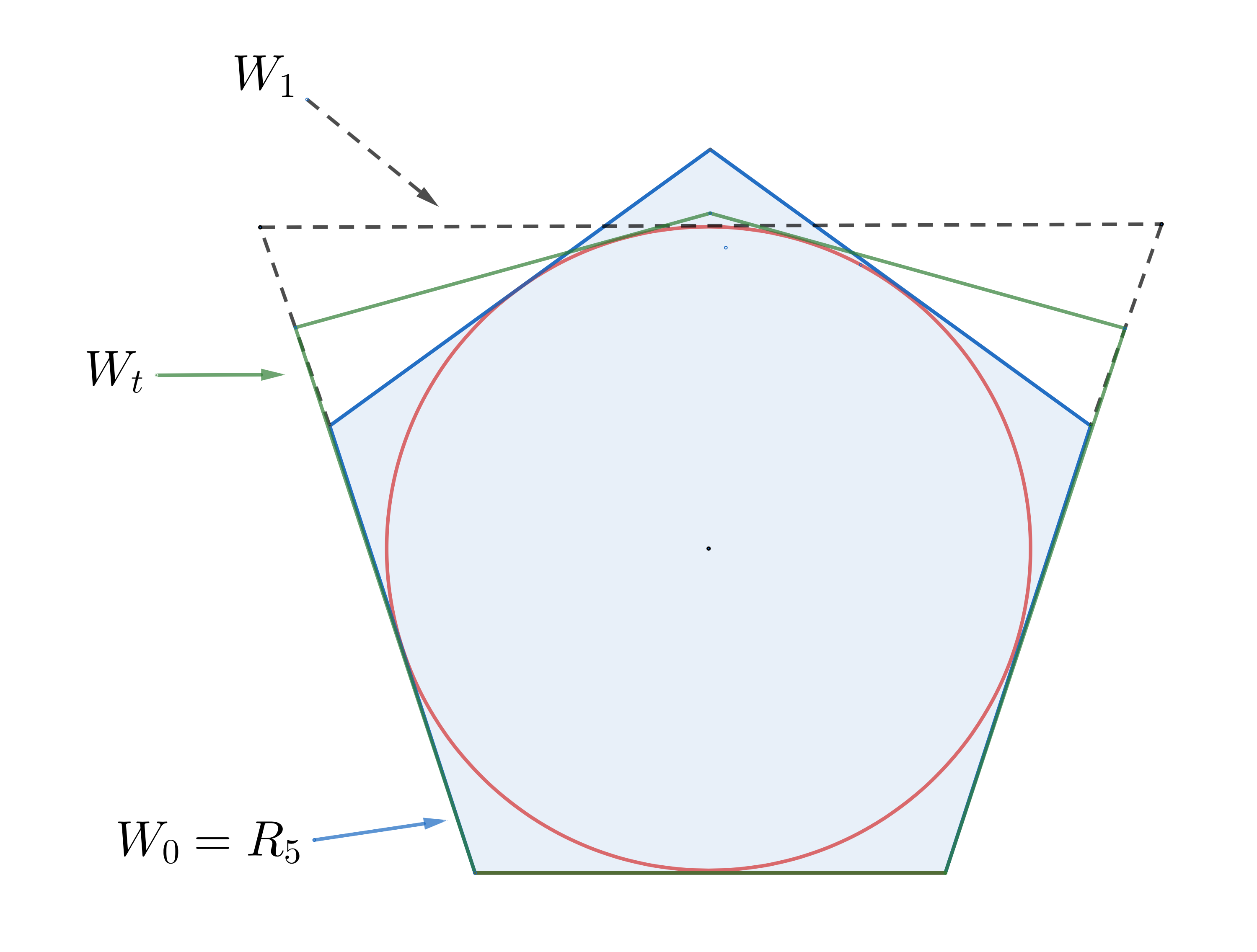}
    \caption{Construction of the circumscribed polygons $W_t$.}
    \label{fig:even}
\end{figure}

Since the { family of convex polygons $(W_t)_{t\in[0,1]}$ and the functionals perimeter, area and Cheeger constant are continuous  with respect to the Hausdorff distance} and $\frac{P(W_1)}{\sqrt{|W_1|}}\ge P(R_{N-1})$ (because of the polygonal isoperimetric inequality in $\p_{N-1}$), we have by the { Intermediate Value Theorem}
$$\{(x,x/2+\sqrt{\pi})\ |\ x\in [P(R_N),P(R_{N-1})]\}\subset \left\{\left(\frac{P(W_t)}{\sqrt{|W_t|}},\sqrt{|W_t|}h(W_t)\right)\ \Big|\ t\in [0,1]\right\}\subset \D_N.$$

We finally have 
$$\{(x,x/2+\sqrt{\pi})\ |\ x\in[ P(R(N),+\infty)\}\subset  \D_{N}.$$\vspace{2mm}

\newpage

\underline{\textbf{The upper boundary of $\D_N$:}}\vspace{2mm}

Let us now study the upper boundary of $\D_N$. We recall that the function $g_N$ is defined as follows:
\begin{equation}\label{eq:f}
\begin{array}{cccl}
   g_N : &[P(R_N),+\infty)& \longrightarrow&\mathbb{R}\\
   & p &\longmapsto &\sup\left\{h(\Omega)\ |\  \Omega\in \p_N,\ \  |\Om|=1\ \text{and}\ P(\Omega)= p\right\}.
\end{array}
\end{equation}
First, let us prove that the problem $\sup\left\{h(\Omega)\ |\ \Omega\in \p_N,\ \  |\Om|=1\ \text{and}\ P(\Omega)= p\right\}$ admits a solution, that we denote { by} $\Om_p\in \p_N$.  \vspace{2mm}

Take $(\Om_p^n)_{n\in\N}$ a sequence of elements of $\p_N$ such that $|\Om_p^n|=1$ and $P(\Om_p^n)=p$ for every $n\in\N$, which satisfies $$\lim\limits_{n\rightarrow +\infty} h(\Om_p^n)=\sup\left\{h(\Omega)\ |\ \Omega\in \p_N,\ \  |\Om|=1\ \text{and}\ P(\Omega)= p\right\}.$$

Since the diameters of the sets $(\Om_p^n)$ are all bounded by $p$ and the involved functionals are invariant by translations, we may assume without loss of generality that there exist a fixed ball $D\subset \R^2$ that contains all the polygons $\Om_p^n$. 

{ Let $n\in \N$. Since $\Om_p^n\in \p_N$, the polygon $\Om_p^n$ is the convex hull of $N$ points $A_1^n$, $A_2^n$,$\dots$, $A_{N}^n$}. The sequences $(A_1^n),\dots,(A_N^n)$ are bounded in $\R^2$. Thus, by Bolzano-Weirstrass Theorem, there exist $\sigma:\N\longrightarrow \N$ strictly increasing and $A_1,\dots,A_N\in \R^2$ such that $\lim\limits_{n\rightarrow +\infty}A_k^{\sigma(n)} = A_k$. By elementary arguments of convex geometry one shows that the { convex hull of the points $A_1,\dots,A_N$ defines a} convex polygon $\Om_p$ which is also the limit  of $(\Om_p^{\sigma(n)})_n$ { with respect to} the Hausdorff distance { (we refer to \cite[Section 1.8]{schneider} for results on the Hausdorff metric)}. By the continuity of the perimeter, the area and the Cheeger constant { with respect to} the Hausdorff distance among convex sets (see \cite[Proposition 3.1]{reverse_cheeger} for the continuity of the Cheeger constant), we have 
$$
\left \{
\begin{array}{l}
    |\Omega_p| = \lim\limits_{n\rightarrow+\infty}  |\Omega_{p}^{\sigma(n)}| =1,\\[3mm]
    P(\Omega_p)=  \lim\limits_{n\rightarrow+\infty}  P\big(\Omega_{p}^{\sigma(n)}\big) =p,\\[3mm]
   h(\Omega_p)=\lim\limits_{n\rightarrow +\infty} h\big(\Omega_{p}^{\sigma(n)}\big)=\sup\left\{h(\Omega)\ |\ \Omega\in \p_N,\ \  |\Om|=1\ \text{and}\ P(\Omega)= p\right\}.
\end{array}
\right.
    $$  
    
Finally, we conclude that $\Om_p\in \p_N$ is a solution of the problem $\sup\left\{h(\Omega)\ |\ \Omega\in \p_N,\ \  |\Om|=1\ \text{and}\ P(\Omega)= p\right\}$.\vspace{2mm}

Now, let us prove the properties of the function $g_N$ stated in Theorem \ref{th:diagram_polygon}.

\vspace{2mm}
\underline{\textbf{1) The function $g_N$ is continuous}}\vspace{2mm}

Let $p_0\in [P(R_N),+\infty)$.

\begin{itemize}
\item We first show { the} \textbf{superior limit inequality}. Let $(p_n)_{n\ge 1}$ { be a} real sequence converging to $p_0$ such that $$\limsup\limits_{p\rightarrow p_0} h(\Omega_p)=\lim\limits_{n\rightarrow +\infty} h(\Omega_{p_n}).$$
As the perimeters of $(\Om_{p_{n}})_{n\in\N^*}$ are uniformly bounded, one may assume that the domains $(\Omega_{p_n})_{n\in \mathbb{N}^*}$ are included in a fixed ball. Then by similar arguments as above, the sequence $(\Omega_{p_n})$ converges { with respect to} the Hausdorff distance up to a subsequence (that we also denote by $p_{n}$ for sake of simplicity) to a convex polygon $\Om^*\in\p_N$. 

Again, by the continuity of the perimeter, the area and the Cheeger constant { with respect to} the Hausdorff distance among convex sets (see \cite[Proposition 3.1]{reverse_cheeger} for the continuity of the Cheeger constant), we have: 
$$
\left \{
\begin{array}{l}
    |\Omega^*| = \lim\limits_{n\rightarrow+\infty}  |\Omega_{p_n}| =1,\\[3mm]
    P(\Omega^*)=  \lim\limits_{n\rightarrow+\infty}  P(\Omega_{p_n}) = \lim\limits_{n\rightarrow+\infty}  p_n=p_0,\\[3mm]
   h(\Omega^*)=\lim\limits_{n\rightarrow +\infty} h(\Omega_{p_n})=\limsup\limits_{p\rightarrow p_0} h(\Omega_p).
\end{array}
\right.
    $$  
    
 Then by the definition of $g_N$ (since $\Omega^*\in \p_N$, $|\Om^*|=1$ and $P(\Omega^*)=p_0$), we obtain

$$g_N(p_0)\geq\ h(\Omega^*)=\lim\limits_{n\rightarrow +\infty} h(\Omega_{p_n})=\limsup\limits_{p\rightarrow p_0} h(\Omega_p)=\limsup\limits_{p\rightarrow p_0} g_N(p).$$

\item It remains to prove \textbf{{ the} inferior limit inequality}. Let $(p_n)_{n\ge1}$ be a real sequence converging to $p_0$ such that
$$\liminf\limits_{p\rightarrow p_0} g_N(p) = \lim\limits_{n\rightarrow +\infty} g_N(p_n).$$
By using parallel chord movements (see the proof of Lemma \ref{lem:dorin}), we can construct a sequence of unit area polygons $(K_n)_{n\ge1}$ with at most $N$ sides, converging to $\Omega_{p_0}$ { with respect to} the Hausdorff distance such that $P(K_n)=p_n$ for sufficiently high values of $n\in \mathbb{N}^*$. By using the definition of $g_N$, one has $$\forall n\in \mathbb{N}^*,\ \ \ \ g_N(p_n)\geq h(K_n).$$

Passing to the limit, we get 
$$\liminf\limits_{p\rightarrow p_0} g_N(p) = \lim\limits_{n\rightarrow +\infty} g_N(p_n)\geq \lim\limits_{n\rightarrow +\infty} h(K_n)=h(\Omega_{p_0})=g_N(p_0).$$
\end{itemize}

We finally get that $\lim\limits_{p\rightarrow p_0} g_N(p) = g_N(p_0)$, so $g_N$ is continuous on $[P(R_N),+\infty)$.

\vspace{2mm}

\underline{\textbf{2) The function $g_N$ is strictly increasing}}\vspace{2mm}

Let us assume by contradiction that $g_N$ is not strictly increasing. Then, there exist $p_2>p_1\geq  P(R_N) $ such that { $g_N(p_2)\leq g_N(p_1)$}, and from the equality case in the polygonal isoperimetric inequality, we necessarily have $p_{1}>P(R_N)$. Since $g$ is continuous, it reaches its maximum on $[P(R_N),p_2]$ at a point $p^*\in(P(R_N),p_{2})$, that is to say
\begin{equation}\label{eq:locmax}
\forall \Om\in\p_N\textrm{ such that }|\Om|=1\ \ \textrm{and}\ \ P(\Om)\in[P(R_N),p_2], \;\;h(\Om_{p^*})=g_N(p^*)\geq h(\Om).
\end{equation}

We note that $g_N(p^*)>p^*/2+\sqrt{\pi}$. Indeed, if it is not the case (i.e., $g_N(p^*)=p^*/2+\sqrt{\pi}$), we have for { sufficiently small $t>0$   $$g_N(p^*+t)\ge (p^*+t)/2+\sqrt{\pi}>p^*/2+\sqrt{\pi}=g_N(p^*),$$}
which contradicts the fact that $g_N$ admits a local maximum at $p^*$.\vspace{2mm}  

{ The assertion} \eqref{eq:locmax} shows that $\Om_{p^*}$ is a local maximizer of the Cheeger constant between convex $N$-gons of unit area. On the other hand, the fact that { $h(\Om_{p^*})=g_N(p^*)>\frac{P(\Om_{p^*})+\sqrt{4\pi }}{2}$} implies that $\Om_{p^*}$ is not a circumscribed polygon (otherwise, it should satisfy the equality \eqref{eq:egal}). Let us now show that any non-circumscribed polygon $\Om$ (i.e., $T(\Om)<\frac{P(\Om)^2}{4|\Om|}$) can be locally perturbed (while preserving the number of sides) in such a way to increase $|\Om|^{1/2}h(\Om)$.\vspace{2mm}

We denote { by} $(\ell_i)_{i\in [\![1,N]\!]}$ the lengths of the sides of the polygon $\Om$ and $(\alpha_i)_{i\in [\![1,N]\!]}$ its interior angles and take $$r_0:=\min_{1\leq i\leq N} \frac{\ell_i}{\tan{\left(\frac{\pi}{2}-\frac{\alpha_{i}}{2}\right)}+\tan{\left(\frac{\pi}{2}-\frac{\alpha_{i-1}}{2}\right)}},$$
{ where we define $\alpha_0:=\alpha_N$. For every $i\in \llbracket 1,N \rrbracket$ and $\eps\in \R$ such that $|\eps|$ is sufficiently small, we introduce the polygon $\Om_\eps^i$ obtained by performing a parallel displacement of the $i$-th side with the algebraic distance $\eps$, see Figure \ref{fig:paralel}.}

\begin{center}
\begin{tikzpicture}
  \draw
   (1,2) coordinate (a) 
    -- (2,4) coordinate (b)
    -- (6,4) coordinate (c)
    pic[ draw=black,  angle eccentricity=1.2,line width=.2mm, angle radius=.3cm]
    {angle=a--b--c};
    
  \draw
   (1,2) coordinate (a) 
    -- (2,4) coordinate (b)
    -- (6,4) coordinate (c)
    pic[ draw=black,  angle eccentricity=1.2,line width=.2mm, angle radius=.3cm]
    {angle=a--b--c};

  \draw
   (4,4)  coordinate (a) 
    -- (6,4) coordinate (b)
    -- (7.5,2) coordinate (c)
    pic[ draw=black,  angle eccentricity=1.2,line width=.2mm, angle radius=.3cm]
    {angle=a--b--c};
    
  \draw
   (4,4)  coordinate (a) 
    -- (6,4) coordinate (b)
    -- (7.5,2) coordinate (c)
    pic[ draw=black,  angle eccentricity=1.2,line width=.2mm, angle radius=.25cm]
    {angle=a--b--c};    
    
\draw (1,2) -- (2,4);
\draw (6,4) -- (2,4);
\draw [dashed,blue,thick] (1.5,3) -- (6.75,3);
\draw [dashed,blue,thick] (1.5,3) -- (1,2);
\draw [dashed,blue,thick] (7.5,2) -- (6.75,3);
\draw [dashed,blue,thick] (1.5,3) -- (6.75,3);
\draw [dashed,blue,thick] (1.5,3) -- (1,2);
\draw [dashed,blue,thick] (7.5,2) -- (6.75,3);
\draw (6,4) -- (7.5,2);
\draw [dashed] (2,4) -- (2.5,5);
\draw [dashed] (6,4) -- (5.25,5);
\draw [dashed] (2.5,5) -- (5.25,5);
\draw [dotted,->](1,4) -- (1,5);
\draw [dotted,->](.5,4) -- (.5,3);
\draw [dashed,->](7,5.25) -- (5.85,4.75);
\draw [dashed,->,blue](8.5,3.25) -- (7.35,2.75);
\draw [<-](6.75,3.5) -- (8,4);
\draw (7,5.34) node[right] {$\Om_\eps^ i$ (if $\eps>0$)};
\draw (8.8,3.45)[blue] node[right] {$\Om_\eps^i$ (if $\eps<0$)};
\draw (1,4.5) node[left] {$\eps>0$};
\draw (.5,3.5) node[left] {$\eps<0$};
\draw (8,4.2) node[right] {$\Om$};
\draw (2.4,3.5) node {$\alpha_{i-1}$};
\draw (5.8,3.5) node {$\alpha_{i}$};
\end{tikzpicture}

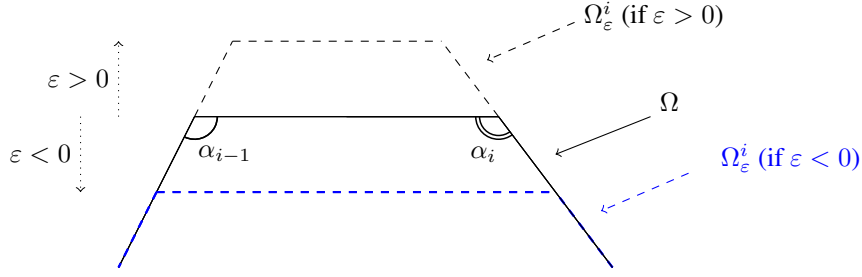
\captionof{figure}{Parallel displacement of the $i$-th side.}
\label{fig:paralel}
\end{center}

Let us distinguish two cases:
\begin{itemize}
\item if $|\Om|-r_0P(\Om)+r_0^2(T(\Om)-\pi)\ge 0$  (where $T$ is defined in \eqref{eq:def_T}), this means by \cite[Theorem 3]{kawohl} that there exists $i\in \llbracket 1,N \rrbracket$ such that the $i$-th side of $\Om$ that does not touch the boundary of the Cheeger set $C_\Om$ or touch it in one point. For $\eps>0$ sufficiently small, the polygons $\Om$ and $\Om_\eps^ i$ have the same Cheeger set. Thus, we have 
$|\Om_\eps^i|^{1/2}h(\Om_\eps^i) > |\Om|^{1/2}h(\Om)$.

\item On the other hand if $|\Om|-r_0P(\Om)+r_0^2(T(\Om)-\pi)< 0$ (where $T$ is defined in \eqref{eq:def_T}), then by \cite[Theorem 3]{kawohl}, the polygons $(\Om_\eps^i)_{i\in \llbracket 1,N \rrbracket}$ (for $|\eps|$ sufficiently small) are Cheeger-regular and thus we have explicit formulas for their Cheeger constants. We write
\begin{equation}\label{eq:cheeger}
|\Om_\eps^i|^{1/2}h(\Om_\eps^i) = \frac{P(\Om_\eps^i)+\sqrt{P(\Om_\eps^i)^2-4(T(\Om_\eps^i)-\pi)|\Om_\eps^i|}}{\sqrt{|\Om_\eps^i|}}=\frac{P(\Om_\eps^i)}{\sqrt{|\Om_\eps^i|}}+\sqrt{\frac{P(\Om_\eps^i)^2}{|\Om_\eps^i|}-4(T(\Om)-\pi)},
\end{equation}
where we used $T(\Om)=T(\Om_\eps^i)$ for the last equality.

As stated in the proof of \cite[Lemma 23]{bucur_fragala}, through elementary geometric arguments, we have { for every $i\in \llbracket 1,N\rrbracket$,}

 $$
\left \{
\begin{array}{c @{=} c}
    P(\Om_\eps^i)\ \ \ &\ \ \ P(\Om) + \left(\frac{1}{\tan \alpha_{i-1}}+\frac{1}{\tan \alpha_i}+\frac{1}{\sin \alpha_{i-1}}+\frac{1}{\sin \alpha_i}\right)  \eps, \vspace{3mm}\\ 
    |\Om_\eps^i|\ \ \ &\ |\Om| + \ell_i \eps + \frac{1}{2}\left(\frac{1}{\tan \alpha_{i-1}}+\frac{1}{\tan \alpha_i}\right) \eps^2.
\end{array}
\right.
    $$   
Thus,
{
\begin{align*}
\frac{P(\Om_\eps)^2}{|\Om_\eps|}& =  \frac{\left(P(\Om) + \left(\frac{1}{\tan \alpha_{i-1}}+\frac{1}{\tan \alpha_i}+\frac{1}{\sin \alpha_{i-1}}+\frac{1}{\sin \alpha_i}\right)  \eps\right)^2}{|\Om| + \ell_i \eps + \frac{1}{2}\left(\frac{1}{\tan \alpha_{i-1}}+\frac{1}{\tan \alpha_i}\right) \eps^2}\\
& = \frac{P(\Om)^2}{|\Om|} +P(\Om)\cdot \Psi_i\cdot\eps + \underset{\eps\rightarrow 0}{o}(\eps),
\end{align*}
where $$\Psi_i:=2\left(\frac{1}{\tan \alpha_{i-1}}+\frac{1}{\tan \alpha_{i}}+\frac{1}{\sin \alpha_{i-1}}+\frac{1}{\sin \alpha_i}\right)-\frac{P(\Om)}{|\Om|}\ell_i.$$}

Let us show that there exists $i\in [\![1,N]\!]$ such that $\Psi_i \ne 0$. We assume by contradiction that $\Psi_i= 0$ for every $i\in [\![1,N]\!]$, we then have $$\sum_{i=1}^N \Psi_i= 0,$$
which is equivalent to $$P(\Om) = \frac{4|\Om|}{P(\Om)} \sum_{i=1}^N\left(\frac{1}{\tan \alpha_i}+\frac{1}{\sin \alpha_i}\right)=\frac{4|\Om|}{P(\Om)} \sum_{i=1}^N \frac{1}{\tan \frac{\alpha_i}{2}}=\frac{4|\Om|}{P(\Om)}\cdot T(\Om),$$
where $T(\Omega)$ is defined in \eqref{eq:def_T}. 
As stated in Theorem \ref{th:T_estimates}, this equality holds if and only if $\Om$ is a circumscribed polygon, which is not the case as assumed above. Thus, there exists $i\in [\![1,N]\!]$ such that $\Psi_i \ne 0$. Then, by performing a parallel displacement (in the suitable sense: $\eps>0$ if $\psi_i>0$ and $\eps<0$ if $\psi_i<0$) of the $i^{\text{th}}$ side, one is able to strictly increase $\frac{P(\Om)}{|\Om|^{1/2}}$ and thus, by \eqref{eq:cheeger}, increase $|\Om|^{1/2}h(\Om)$.
\end{itemize}

\vspace{2mm}
\underline{\textbf{3) Comparison between $g_N$ and $f_N$ and asymptotic}}
\begin{itemize}
\item It is immediate by the inclusion $\p_{N-1}\subset \p_N$, the equality $f_{N-1}=g_{N-1}$ (because $N-1$ is even) and the inequality \eqref{eq:uper_bound} that $f_{N-1}\leq g_N\leq f_N$ on $[P(R_N),+\infty)$ (we recall that $f_N$ is defined in \eqref{eq:f_N}). 
\item { If we perform a parallel displacement of one of the sides of the regular polygon $R_N$,  we see that there exists $\eps_0>0$ and a continuous family (with respect to the Hausdorff distance) of Cheeger-regular polygons $(\Om_\eps)_{\eps\in [0,\eps_0)}$} with the same interior angles as $R_N$ such that $\frac{P(\Om_\eps)}{|\Om_\eps|^{1/2}} > \frac{P(\Om)}{|\Om|^{1/2}}$, for every $\eps\in(0,\eps_0)$. This shows that there exists $b_N\ge \frac{P(\Om_{\eps_0})}{|\Om_{\eps_0}|^{1/2}}> P(R_N)$ such that 
$$\forall x\in [P(R_N),b_N],\ \ \ \  g_N(x) = \frac{x+\sqrt{x^2+4(\pi-N\tan{\frac{\pi}{N}})}}{2} =f_N(x).$$

\item 

Let us now prove that if $\Om$ is a polygon of $N$ sides and unit area and whose angles are all equal (to $\beta_N:=\frac{(N-2)\pi}{N}$), one has 
\begin{equation}\label{ineq:c_N}
P(\Om)\leq 2N\sqrt{\tan \frac{\beta_N}{2}}.
\end{equation}
We assume that the polygon $\Om$ is included in the half-plane $\{y\ge0\}$ and that its longest side is given by the segment $[OA]$, where $A(\ell,0)$ and $\ell>0$.  Since $N$ is odd and all the angles of $\Om$ are equal, we deduce that there exists a unique vertex $B(x_B,\eta)$ which is strictly higher (i.e., has the largest ordinate) than all the other vertices. We can assume without loss of generality that $x_B\ge \ell/2$. { We denote by $C(x_C,0)$ the point of intersection of the line obtained by extending the left side of extremity $B$ and the $x$-axis and by $D(x_B,0)$ the orthogonal projection of the point $B$ on the $x$-axis, see Figure \ref{fig:flat_odd_polygon}. By the convexity of $\Om$,  we have $0<\theta\leq \frac{\beta_N}{2}<\frac{\pi}{2}$, where $\theta$ is the angle between the vectors $\overrightarrow{BO}$ and $\overrightarrow{BD}$. 

We note that $x_C\leq 0$. Indeed, 
\begin{equation}\label{eq:x_C}
x_C = x_B - (x_B-x_C) = x_B - CD = x_B - \eta \tan{\frac{\beta_N}{2}} \leq x_B - \eta \tan{\theta} = x_B - OD = 0.    
\end{equation}
}

\begin{center}
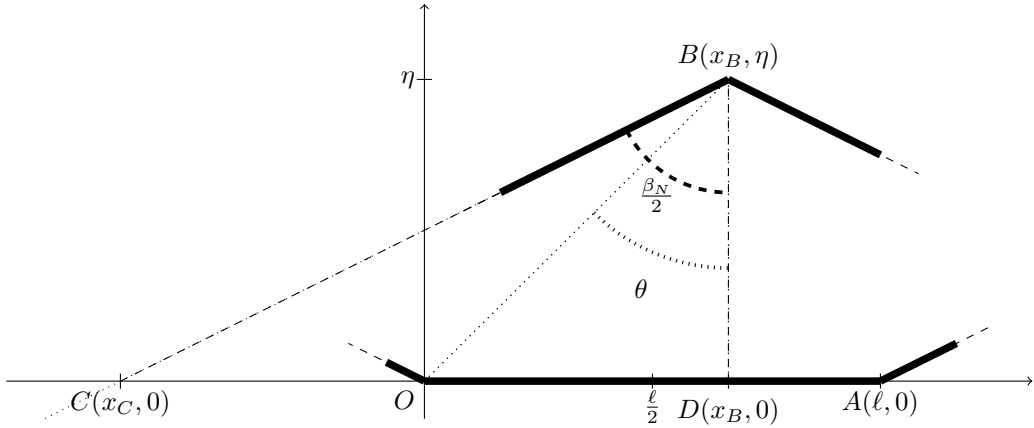

\begin{tikzpicture}
  \draw[dotted]
   (0,0) coordinate (a) 
    -- (4,4) coordinate (b)
    -- (4,0) coordinate (c)
    pic["$\theta$", draw=black,  angle eccentricity=1.2,line width=.5mm, angle radius=2.5cm]
    {angle=a--b--c};
    
      \draw[dashed]
   (-4,0) coordinate (a) 
    -- (4,4) coordinate (b)
    -- (4,0) coordinate (c)
    pic["$\frac{\beta_N}{2}$", draw=black, line width=.5mm, angle eccentricity=1.2, angle radius=1.5cm]
    {angle=a--b--c};

\draw[->] (-5.5,0) -- (8,0);
\draw (6,0) node[below] {$A(\ell,0)$};
\draw (0,0) node[below left] {$O$};
\draw (3,0) node[below] {$\frac{\ell}{2}$};
\draw (4,-.1) node[below] {$D(x_B,0)$};
\draw (-4,0) node[below] {$C(x_C,0)$};
\draw (4,4) node[above] {$B(x_B,\eta)$};
\draw (0,4) node[left] {$\eta$};
\draw [->] (0,-.5) -- (0,5);
\draw [dotted] (0,0) -- (4,4);
\draw [dotted] (4,4) -- (4,0);
\draw [dashed] (6,0) -- (7.5,.75);
\draw [dashed] (0,0) -- (-1,.5);
\draw [line width=1mm] (6,0) -- (7,.5);
\draw [line width=1mm] (0,0) -- (-.5,.25);
\draw [line width=1mm] (0,0) -- (6,0);
\draw [line width=1mm] (4,4) -- (6,3);
\draw [line width=1mm] (4,4) -- (1,2.5);
\draw [dashed] (4,4) -- (.5,2.25);
\draw [dotted] (4,4) -- (-5,-.5);
\draw [dashed] (4,4) -- (6.5,2.75);

\draw (6,-0.1) -- (6,0.1);
\draw (-0.1,4) -- (0.1,4);
\draw (4,-0.1) -- (4,0.1);
\draw (3,-0.1) -- (3,0.1);
\draw (-4,-0.1) -- (-4,0.1);

\end{tikzpicture}
\captionof{figure}{An $N$-gon with all interior angles equal to $\beta_N$.}
\label{fig:flat_odd_polygon}
\end{center}

We have $$\frac{1}{\tan \frac{\beta_N}{2}}=\cotan \frac{\beta_N}{2}=\frac{\eta}{x_B-x_C}=2\frac{\frac{1}{2}\ell \eta}{\ell(x_B-x_C)}= 2\frac{\mathcal{S}_{OAB}}{\ell(x_B-x_C)} \leq \frac{4}{\ell^2},$$
{ where $\mathcal{S}_{OAB}$ corresponds to the area of the triangle $OAB$}. The last inequality is a consequence of $\mathcal{S}_{OAB}\leq 1$ (because $OAB\subset \Om$ and $|\Om|=1$) and $x_B-x_C\ge x_B\ge \ell/2$ { (we recall that $x_C\leq 0$ as shown in \eqref{eq:x_C})}.

Thus, we have the result
$$P(\Om) \leq N\ell \leq 2N{\sqrt{\tan \frac{\beta_N}{2}}}.$$
This proves that there is no polygon of unit area, $N$ sides and perimeter larger than  $2N{\sqrt{\tan \frac{\beta_N}{2}}}$ whose interior angles are all equal (to $\beta_N$). Thus, for every $\Om\in\p_N$ such that $|\Om|=1$ and $P(\Om)> 2N{\sqrt{\tan \frac{\beta_N}{2}}}$, we have 
$$h(\Om) \leq  \frac{P(\Om)+\sqrt{P(\Om)^2-4\big(T(\Om)-\pi\big)|\Om|}}{2|\Om|}<\frac{P(\Om)+\sqrt{P(\Om)^2+4\big(\pi-N\tan{\frac{\pi}{N}}\big) |\Om|}}{2|\Om|},$$
where the first inequality corresponds to the inequality \eqref{eq:uper_bound}  and the second (strict) one is a consequence of the inequality $T(\Om)>N\tan \frac{\pi}{N}$ of Theorem \ref{th:T_estimates} (see \eqref{eq:def_T} for the definition of $T(\Omega)$). 

We finally have that $$\forall x> 2N{\sqrt{\tan \frac{\beta_N}{2}}},\ \ \ \ \ \ \ g_N(x)<\frac{x+\sqrt{x^2+4(\pi-N\tan{\frac{\pi}{N}})}}{2}.$$
\item Since $N\ge 4$, we have 
$$\forall x\ge P(R_{N-1}),\ \ \ \ \ \ \  \frac{x+\sqrt{x^2+4(\pi-(N-1)\tan{\frac{\pi}{N-1}})}}{2} \leq  g_N(x) \leq \frac{x+\sqrt{x^2+4(\pi-N\tan{\frac{\pi}{N}})}}{2}. $$
Thus, $$g_N(x)\underset{x\rightarrow +\infty}{\sim} x.$$
\end{itemize}

\section{Numerical simulations}\label{s:numeric}
Since it is not easy to give an explicit description of the upper boundary of the diagram $\D_N$ when $N$ is odd, we perform some simulations in order to have an approximation of the function $g_N$ (defined in Definition \ref{def:boundary_polygons}). We numerically solve the following problems: 
\begin{equation}\label{eq:problem}
\max\left\{h(\Omega)\ |\ \Omega\in \p_N,\ \  |\Om|=1\ \text{and}\ P(\Omega)= p_0\right\},
\end{equation}
where $p_0\in [P(R_N),+\infty)$. 
\subsection{Parametrization of the domains}
We parametrize a polygon $\Om$ via its vertices' coordinates $A_1:=(x_1,y_1),\dots,A_N:=(x_N,y_N)$.  
\begin{itemize}
\item Let us first express the constraint of convexity in terms of the coordinates of the vertices of $\Om$. It is classical that a polygon $\Om$ is convex if and only if all the interior angles are less than or equal to $\pi$. By using the cross product (see \cite{MR4089508} for example), the convexity is equivalent to the constraints
$$C_k(x_1,\dots,x_N,y_1,\dots,y_N) := (x_{k-1}-x_k)(y_{k+1}-y_k)-(y_{k-1}-y_k)(x_{k+1}-x_k)\leq 0,$$
for $k \in \llbracket 1,N \rrbracket$, where we used the conventions $A_0 = A_N$ and $A_{N+1} = A_1$. 
\item The area and the perimeter of $\Om$ are given by the following formulas 
 $$
\left \{
\begin{array}{c @{:=} c}
    f(x_1,\dots,x_N,y_1,\dots,y_N)\ \ \ &\ \ \ P(\Om)\  = \sum_{k=1}^N \sqrt{(x_{k+1}-x_k)^2+(y_{k+1}-y_k)^2}, \vspace{3mm}\\ 
    g(x_1,\dots,x_N,y_1,\dots,y_N)\ \ \ &\ |\Om|\ =\ \frac{1}{2}\left| \sum_{k=1}^N (x_k y_{k+1}-x_{k+1}y_k) \right|\ \ \ \ \ \ \ \ \ \ \ \ \ \ \ \ \ \ \ \
\end{array}
\right.
    $$   
\item Finally, we introduce the function 
$$\phi: (x_1,\dots,x_N,y_1,\dots,y_N)\longmapsto \begin{cases}
    h(\Omega), &\qquad\mbox{if the polygon $\Omega$ does not have overlapping sides,}\vspace{1mm} \\
    -1,& \qquad\mbox{if the polygon $\Omega$ { has} overlapping sides,}
\end{cases}$$
where $\Om$ is the polygon of vertices $A_1(x_1,y_1),\dots,A_N(x_N,y_N)$. The Cheeger constant is computed by using an open source Matlab code of B. Bogosel \cite{beni}. The algorithm combines the well known result of Kawohl and Lachand-Robert \cite{kawohl} (stated in Theorem \ref{th:KLR_convex}) which characterizes the Cheeger sets for convex domains and the toolbox Clipper, a very good implementation of polygons' inner parallel sets \footnote{We refer to Definition \ref{def:inner} for the notion of inner parallel sets.} computation by A. Johnson. 
\end{itemize}

We are now able to write the problem \eqref{eq:problem} in the following form 
$$
\left\{\begin{matrix}
\sup\limits_{(x_1,\dots,y_N)\in \R^{2N}} \phi(x_1,\dots,y_N),\vspace{2mm}
\\ 
\forall k\in \llbracket 1,N\rrbracket,\ \ \ \ C_k(x_1,\dots,x_N,y_1,\dots,y_N)\leq 0,\vspace{2mm}
\\ 
f(x_1,\dots,x_N,y_1,\dots,y_N)=p_0\vspace{2mm}
\\
g(x_1,\dots,x_N,y_1,\dots,y_N)=1.
\end{matrix}\right.
$$
\subsection{Computation of the gradients}

We want to use Matlab's routine \texttt{fmincon} to solve the last problem. To do so, we should compute the gradients of the constraints $C_k$,$f$,$g$ and the objective function $\phi$. 

Since $C_k,f$ and $g$ are explicitly expressed via usual functions of $(x_1,\dots,y_N)$, we obtain explicit formulas for the gradients by straightforward computations. This is not the case for the objective function $\phi$, for which we use a shape derivation result proved in \cite{parini_saintier}. Since $\Omega$ is convex, it admits a unique Cheeger set $C_\Omega$ that is $C^{1,1}$ (see \cite{cham_1}). We are then in position to use the following result of \cite[Corollary 1.2]{parini_saintier}:
$$h'(\Om,V) := \lim\limits_{t\rightarrow 0}\frac{h(\Om_t)-h(\Om)}{t}= \frac{1}{|C_\Om|} \int_{\partial C_\Om\cap \partial \Om} \big(\kappa -h(\Om)\big)\langle V,n \rangle d\mathcal{H}^1,$$
where $V\in \R^2\longrightarrow \R^2$ is a smooth perturbation, $\Om_t := (Id+tV)(\Om)$ (where $Id:x\longmapsto x$ is the identity map), $n(x)$ is the normal to $\partial \Om$ at the point $x$ and $\kappa(x)$ is the curvature of $\partial \Om$ at the point $x$. 

Since $\Om$ is a convex polygon and $C_\Om$ is $C^{1,1}$, the contact set $\Om\cap C_\Om$ is given by a finite union of segments. We then have $\kappa = 0$ on $\partial C_\Om \cap \partial \Om$. Thus, if we denote { by} $V_{x_k}$ and $V_{y_k}$ the perturbations respectively associated to the variables $x_k$ and $y_k$, where $k \in \llbracket 1,N\rrbracket$, we have
 $$
\left \{
\begin{array}{c @{=} c}
    \frac{\partial \phi}{\partial x_k}(x_1,\dots,x_N,y_1,\dots,y_N)\ \ \ &\ \ \ -\frac{h(\Om)}{|C_\Om|} \int_{\partial C_\Om \cap \partial \Om} \langle V_{x_k},n \rangle d\mathcal{H}^1, \vspace{3mm}\\ 
    \frac{\partial \phi}{\partial y_k}(x_1,\dots,x_N,y_1,\dots,y_N)\ \ \ &\ \ \ -\frac{h(\Om)}{|C_\Om|} \int_{\partial C_\Om \cap \partial \Om} \langle V_{y_k},n \rangle d\mathcal{H}^1.
\end{array}
\right.
    $$   
    
\subsection{Results}
In Figure \ref{fig:blaschke5}, we plot the points corresponding to $10^5$ random convex pentagons and the points corresponding to the optimal pentagons obtained for $p_0\in\{P(R_5)+0.02\cdot k\ |\ k\in \llbracket 0,20\rrbracket \}$, in addition to the  graphs of the functions $x\longmapsto x$ and $x\longmapsto f_5(x):= \frac{x+\sqrt{x^2+4(\pi-5\tan{\frac{\pi}{5}})}}{2}$ whose hypographs represent the inequalities \eqref{eq:trivial} and \eqref{eq:uper_bound} in the Blaschke--Santal\'o diagram.  The random polygons are generated by using the algorithm presented in \cite{sander} (based on a work of P. Valtr, see \cite[Section 4]{random_polygon}). The Cheeger constants are computed by a Matlab algorithm implemented by B. Bogosel, see \cite{beni}.

 
\begin{figure}[h]
    \centering
    \includegraphics[scale=.38]{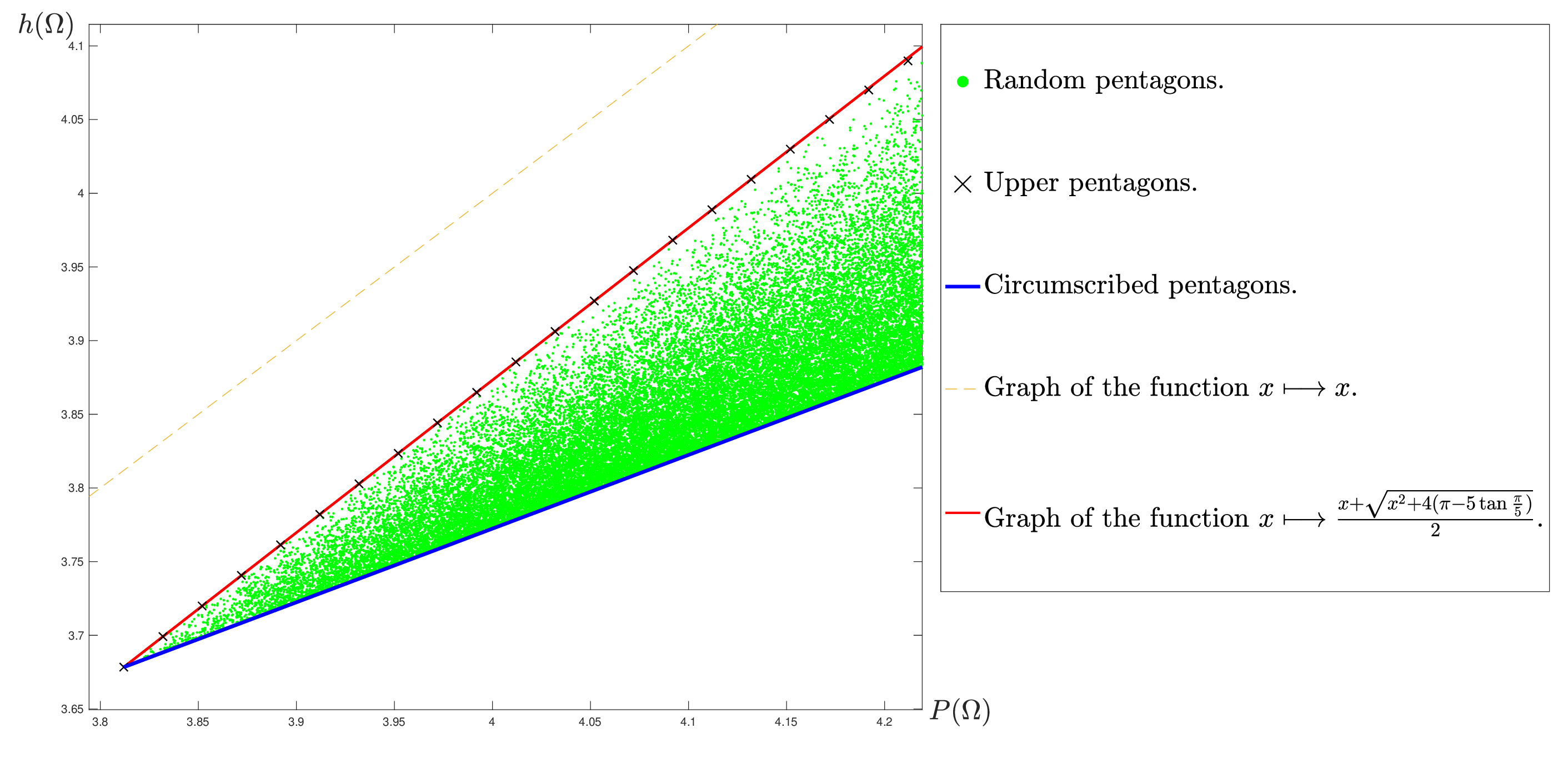}
    \caption{Numerical approximation of the Blaschke--Santal\'o diagram of convex pentagons.}
    \label{fig:blaschke5}
\end{figure}

In Figure \ref{fig:zoom}, we provide a zoom on the upper boundary of the diagram $\D_5$. We observe that the points $(p,g_5(p))_{p\ge P(R_5)}$ (where $g_5$ is introduced in Definition \ref{def:boundary_polygons}) are  at first exactly located on the red and continuous curve corresponding to the graph of the function $x\longmapsto f_5(x):= \frac{x+\sqrt{x^2+4(\pi-5\tan{\frac{\pi}{5}})}}{2}$, then { they detach} and become strictly lower than it. We also note that the abscissa $c_5$ introduced in the statement of Theorem \ref{th:diagram_polygon}  is indeed (as shown in \eqref{ineq:c_N}) bounded from above by $10\sqrt{\tan \frac{3\pi}{10}}\approx 11.73$.

\begin{figure}[h!]
    \centering
    \includegraphics[scale=.6]{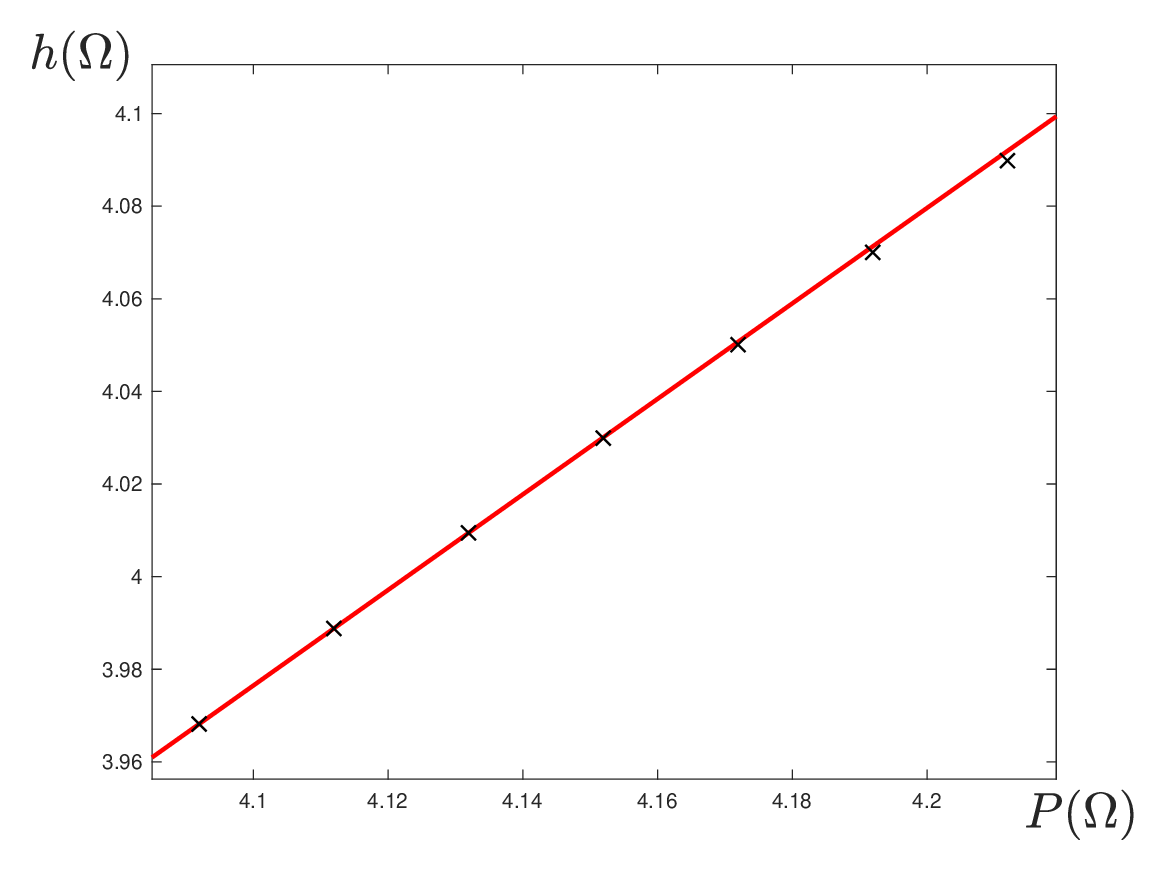}
    \caption{A zoom on the upper boundary of the diagram $\D_5$.}
    \label{fig:zoom}
\end{figure}

Finally, in Figure \ref{fig:polygons}, we give the obtained optimal pentagons (solutions of \eqref{eq:problem}) for $p_0\in\{3.86,4,5\}$. We note that for larger values of $p_0$, the maximizers seem not to be Cheeger-regular.

\begin{figure}[!h]
    \centering
\begin{tabular}{|*{4}{>{\centering}m{3.5cm}|}}
\hline
Values of $p_0$ & 3.86& 4 & 5\tabularnewline
\hline
Optimal pentagons& \includegraphics[scale=0.21]{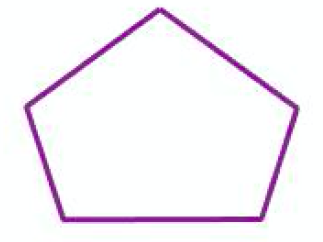} & \includegraphics[scale=0.21]{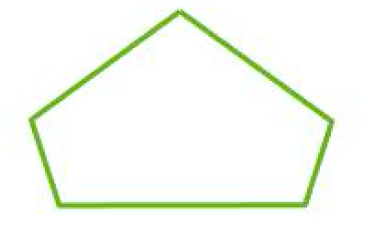}&  \includegraphics[scale=0.21]{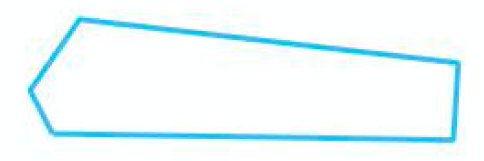}\tabularnewline
\hline
\end{tabular}
    \caption{Numerically obtained (upper) extremal pentagons corresponding to different values of $p_0$.}
    \label{fig:polygons}
\end{figure}

{
\begin{remark}
Our numerical approach has been tested on problems for which the solutions are theoretically provided in the present paper (see Theorem \ref{th:diagram_polygon} and its proof in Section \ref{th:diagram_polygon}). Namely, problems of the type 
$$\min\{h(\Om)\ |\ P(\Om)=p_0,\ |\Om|=1\  \text{and}\ \Om\in \p_N\} = \frac{p_0}{2}+\sqrt{\pi},$$ 
where $N\ge 3$ and $p_0\ge P(R_N)$ and
$$\max\{h(\Om)\ |\ P(\Om)=p_0,\ |\Om|=1\  \text{and}\ \Om\in \p_N\}=f_N(p_0)=\frac{p_0+\sqrt{p_0^2+4(\pi-N\tan{\frac{\pi}{N}})}}{2},$$
where $N$ is even. 
\end{remark}
}

\newpage 
\section{Some applications} \label{s:remarks}
In this last section, we give two applications of the results of the present paper. 
\subsection{Improvement of a classical lower bound for the Cheeger constant of polygons}\label{ss:improvement_of_inequalities}
One early result in the spirit of the inequality \eqref{eq:new_inequality} is due to R. Brooks and P. Waksman \cite{MR883424}, see Proposition \ref{th:brooks} below. It gives a lower estimate of the Cheeger constant of convex polygons, which we show to be a consequence of the inequality \eqref{eq:new_inequality}. 
\begin{Proposition}\cite[Theorem 3.]{MR883424}\label{th:brooks}
If $\Om$ is a convex polygon, we denote { by} $\Om^*$ the (unique up to rigid motions) circumscribed polygon which has the same area as $\Om$ and whose angles are the same as those of $\Om$, then 
\begin{equation}\label{eq:brook}
h(\Om)\ge h(\Om^*)= \frac{\sqrt{T(\Om)}+\sqrt{\pi}}{\sqrt{|\Om|}},
\end{equation}
where the functional $T$ is defined in \eqref{eq:def_T}.
\end{Proposition}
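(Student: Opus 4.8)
The plan is to derive the statement directly from the sharp inequality \eqref{eq:new_inequality}, combined with the two–sided estimate on $T$ of Theorem \ref{th:T_estimates}, exploiting that the auxiliary polygon $\Om^*$ is circumscribed. First I would compute $h(\Om^*)$ explicitly. Since $\Om^*$ is circumscribed, it is homothetic to its form body, so the equality \eqref{eq:egal} applies and gives $h(\Om^*)=\frac{P(\Om^*)+\sqrt{4\pi|\Om^*|}}{2|\Om^*|}$. Moreover, being circumscribed, $\Om^*$ saturates the upper bound in \eqref{eq:fragala}, that is $T(\Om^*)=\frac{P(\Om^*)^2}{4|\Om^*|}$, whence $P(\Om^*)=2\sqrt{|\Om^*|\,T(\Om^*)}$. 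By construction $\Om^*$ has the same area and the same interior angles as $\Om$; since $T$ in \eqref{eq:def_T} depends only on the angles, we have $T(\Om^*)=T(\Om)$ and $|\Om^*|=|\Om|$. Substituting these identities yields
$$h(\Om^*)=\frac{2\sqrt{|\Om|\,T(\Om)}+2\sqrt{\pi|\Om|}}{2|\Om|}=\frac{\sqrt{T(\Om)}+\sqrt{\pi}}{\sqrt{|\Om|}},$$
which is the claimed value of $h(\Om^*)$.

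Next I would apply \eqref{eq:new_inequality} to $\Om$ itself, obtaining $h(\Om)\ge \frac{P(\Om)+\sqrt{4\pi|\Om|}}{2|\Om|}$. To conclude it then suffices to compare this lower bound with the value of $h(\Om^*)$ just computed, that is to verify
$$\frac{P(\Om)+\sqrt{4\pi|\Om|}}{2|\Om|}\ \ge\ \frac{2\sqrt{|\Om|\,T(\Om)}+\sqrt{4\pi|\Om|}}{2|\Om|}.$$
Cancelling the common term $\sqrt{4\pi|\Om|}$, this reduces to $P(\Om)\ge 2\sqrt{|\Om|\,T(\Om)}$, equivalently $T(\Om)\le \frac{P(\Om)^2}{4|\Om|}$, which is precisely the upper estimate of \eqref{eq:fragala}. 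Chaining the two inequalities gives $h(\Om)\ge h(\Om^*)$ and hence \eqref{eq:brook}.

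There is essentially no hard step here: once the ingredients are identified, the argument is a short algebraic manipulation. The only point deserving care is the recognition that the upper bound $T(\Om)\le P(\Om)^2/(4|\Om|)$ is exactly the inequality converting the new estimate \eqref{eq:new_inequality} into the Brooks–Waksman form, and that its equality case—the circumscribed polygon $\Om^*$—is precisely where \eqref{eq:new_inequality} is saturated. In this way the sharpness of \eqref{eq:new_inequality} transfers to \eqref{eq:brook}, completing the proof.
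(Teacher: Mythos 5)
Your proof is correct and takes essentially the same approach as the paper: apply \eqref{eq:new_inequality} to $\Om$, then the upper estimate $T(\Om)\le P(\Om)^2/(4|\Om|)$ from \eqref{eq:fragala}, and conclude using that $\Om^*$ has the same area and angles as $\Om$. The only (minor) difference is that you explicitly verify the equality $h(\Om^*)=\big(\sqrt{T(\Om)}+\sqrt{\pi}\big)/\sqrt{|\Om|}$ via \eqref{eq:egal} and the equality case of \eqref{eq:fragala}, a point the paper leaves implicit in its citation of Brooks--Waksman.
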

\begin{proof}
We use the inequality \eqref{eq:new_inequality} to provide an alternative proof: 
$$h(\Om) \ge \frac{P(\Om)+\sqrt{4\pi |\Om|}}{2|\Om|}\ge \frac{2\sqrt{|\Om|}\sqrt{T(\Om)}+\sqrt{4\pi|\Om|}}{2|\Om|}=\frac{\sqrt{T(\Om)}+\sqrt{\pi}}{\sqrt{|\Om|}}=\frac{\sqrt{T(\Om^*)}+\sqrt{\pi}}{\sqrt{|\Om^*|}},$$
where we respectively used \eqref{eq:new_inequality} and \eqref{eq:fragala} for the first and second inequalities and the fact that $\Om^*$ has the same area and interior angles as $\Om$ for the last equality. 



\end{proof}

\subsection{On the stability of the Cheeger constant of polygons}
We use the inequality \eqref{eq:new_inequality} and \cite[Proposition 2.1]{MR3550852} to give a quantitative version of the polygonal Faber-Krahn type inequality for convex polygons (see \cite{bucur_fragala}). 
\begin{Proposition}
Take $N\ge 3$. There exists a positive constant $C_N$ such that for every convex $N$-gon $\Om$ with unit area, there exists a rigid motion $\rho$ of $\R^2$ such that
\begin{equation}\label{eq:quant}
h(\Om)^2-h(R_N)^2\ge C_N d^H\big(\Om,\rho(R_N)\big)^2,
\end{equation}
where $R_N$ is a regular polygon of unit area and $N$ sides. 
\end{Proposition}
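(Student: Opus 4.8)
The plan is to reduce the stability of the Cheeger constant to a quantitative version of the polygonal isoperimetric inequality. The starting point is the observation that a regular polygon is circumscribed, so the equality case \eqref{eq:egal} applies and gives $h(R_N)=\tfrac{P(R_N)}{2}+\sqrt{\pi}$ (recall $|R_N|=1$). On the other hand, the sharp lower bound \eqref{eq:new_inequality} yields, for every convex $N$-gon $\Om$ with $|\Om|=1$, the inequality $h(\Om)\ge \tfrac{P(\Om)}{2}+\sqrt{\pi}$. In particular $h(\Om)\ge h(R_N)$, recovering the Faber--Krahn type inequality of \cite{bucur_fragala}.

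First I would turn these two facts into a lower bound for the left-hand side of \eqref{eq:quant} purely in terms of perimeters. Writing $a:=\tfrac{P(\Om)}{2}+\sqrt{\pi}$ and $b:=\tfrac{P(R_N)}{2}+\sqrt{\pi}$, one has $h(\Om)\ge a\ge 0$ and $h(R_N)=b$, hence
\[
h(\Om)^2-h(R_N)^2\ \ge\ a^2-b^2\ =\ \frac{P(\Om)^2-P(R_N)^2}{4}+\sqrt{\pi}\bigl(P(\Om)-P(R_N)\bigr)\ \ge\ \frac{P(\Om)^2-P(R_N)^2}{4},
\]
where the last step uses $P(\Om)\ge P(R_N)$ (the polygonal isoperimetric inequality). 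I emphasise that working with the squares is what makes the argument robust: the quantity $\tfrac14\bigl(P(\Om)^2-P(R_N)^2\bigr)$ is exactly the polygonal isoperimetric deficit at unit area (since $\tfrac{P(R_N)^2}{4}=N\tan\tfrac{\pi}{N}$ by the equality case of \eqref{eq:fragala}), and it grows like $d^H\bigl(\Om,\rho(R_N)\bigr)^2$ even for strongly elongated polygons, for which $h(\Om)\sim P(\Om)/2$ is large.

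Second I would invoke \cite[Proposition 2.1]{MR3550852}, the quantitative polygonal isoperimetric inequality, which furnishes a constant $\kappa_N>0$ and a rigid motion $\rho$ of $\R^2$ such that $\tfrac14\bigl(P(\Om)^2-P(R_N)^2\bigr)\ge \kappa_N\, d^H\bigl(\Om,\rho(R_N)\bigr)^2$ for every convex $N$-gon of unit area. Chaining this with the previous display gives \eqref{eq:quant} with $C_N=\kappa_N$.

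The algebra of the first step is routine, so the whole content of the statement sits in the external estimate of \cite[Proposition 2.1]{MR3550852}. The point that needs care — and the only genuine obstacle — is to make sure the reduction is carried out at the level of $h^2$ rather than $h$: a bound of the form $P(\Om)-P(R_N)\ge c_N\, d^H\bigl(\Om,\rho(R_N)\bigr)^2$ is false for thin shapes (there the perimeter deficit is only linear in the elongation while $d^H$ squared is quadratic), whereas the deficit $P(\Om)^2-P(R_N)^2$ has the correct quadratic growth. One should also check that the rigid motion $\rho$ and the constant $\kappa_N$ produced by the reference are uniform over all unit-area $N$-gons, so that a single $C_N$ indeed works in \eqref{eq:quant}.
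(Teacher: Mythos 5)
Your proposal is correct and takes essentially the same approach as the paper: both combine the lower bound \eqref{eq:new_inequality} for $\Om$ with the equality \eqref{eq:egal} for the circumscribed polygon $R_N$ to obtain $h(\Om)^2-h(R_N)^2\ge \tfrac{1}{4}\bigl(P(\Om)^2-P(R_N)^2\bigr)$, and then conclude via the quantitative polygonal isoperimetric inequality of \cite[Proposition 2.1]{MR3550852}. The only cosmetic difference is in how the linear term is discarded (the paper uses the polygonal Faber--Krahn inequality $h(\Om)\ge h(R_N)$, you use $P(\Om)\ge P(R_N)$), which is immaterial.
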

\begin{proof}
Take $N\ge 3$ and $\Om$ a convex $N$-gon. We have by the inequality \eqref{eq:new_inequality} applied to $\Om$ and the equality \eqref{eq:egal} applied to the circumscribed polygon $R_N$
$$P(\Om)\leq 2(h(\Om)-\sqrt{\pi})\ \ \ \ \ \ \text{and}\ \ \ \ \ \ P(R_N)= 2(h(R_N)-\sqrt{\pi}).$$ 
Thus,
\begin{align*}
P(\Om)^2- P(R_N)^2&\ \ \ \ \ \leq\ \ \ \  4(h(\Om)-\sqrt{\pi})^2-4(h(R_N)-\sqrt{\pi})^2  \\
&\ \ \ \ \ =\ \ \ \   4\big(h(\Om)^2-h(R_N)^2-2\sqrt{\pi}\big(h(\Om)-h(R_N)\big)\big)\\
&\ \ \ \ \ \leq\ \ \ \  4\big(h(\Om)^2-h(R_N)^2\big),
\end{align*}
where the last inequality is a consequence of the polygonal Faber-Krahn type inequality $h(\Om)\ge h(R_N)$, see \cite{bucur_fragala}. 

On the other hand, it is proved in \cite[Proposition 2.1]{MR3550852} that there exists $C_N>0$ depending only on $N$ such that 
$$P(\Om)^2- P(R_N)^2\ge 4C_Nd^H\big(\Om,\rho(R_N)\big)^2.$$
Finally, by combining the latter inequalities, we get the announced result.
\end{proof}
{
\begin{remark}
The quantitative inequality \eqref{eq:quant} shows in particular the stability of the Cheeger constant in the neighborhood of regular polygons among convex polygons with the same number of sides and the same area. In the sense that if the Cheeger constants of a convex polygon and a regular polygon with  the same number of sides and the same area are close, then the polygon looks (up to rigid motions) like the regular one. A similar result can be obtained for non convex $N$-gons, see  \cite{MR3436708}.
\end{remark}

\section*{Acknowledgments}

\hspace{.3cm} The author would like to warmly thank the anonymous referee for his careful reading and detailed report that considerably helped to improve the presentation of the material of the paper. The author would also like to thank Dorin Bucur and Jimmy Lamboley for stimulating conversations and valuable comments.

\vspace{1mm}

This work was partially supported by the project ANR-18-CE40-0013 SHAPO financed by the French Agence Nationale de la Recherche (ANR) and by the Alexander von Humboldt Foundation. 

}

%










\bibliographystyle{plain}
\bibliography{samplee}

\vspace{1cm}
(Ilias Ftouhi) \textsc{ Friedrich-Alexander-Universit{\"a}t Erlangen-N{\"u}rnberg, Department of Mathematics, Chair of Applied Analysis (Alexander von Humboldt Professorship), Cauerstr. 11, 91058 Erlangen, Germany.}\vspace{1mm}

\textit{Email address:} \textbf{\texttt{ilias.ftouhi@fau.de}}







\end{document}